\titleformat{\section}{\normalfont\sffamily\Large\bfseries}{\thesection}{1em}{}
\titleformat{\subsection}{\normalfont\sffamily\large\bfseries}{\thesubsection}{1em}{}
\titleformat{\subsubsection}{\normalfont\sffamily\bfseries}{\thesubsubsection}{1em}{}
\newcommand{\rom}{\normalshape(\roman*)}
\renewcommand{\emph}[1]{\textbf{\sffamily#1}}
\numberwithin{equation}{section}
\newtheoremstyle{mytheorem}
{} 
{} 
{\itshape} 
{} 
{\sffamily\bfseries} 
{\sffamily\bfseries} 
{ } 
{\thmname{#1}\thmnumber{ #2}\thmnote{ \normalfont(#3)}}
\newtheoremstyle{mydefinition}
{} 
{} 
{} 
{} 
{\sffamily\bfseries} 
{\sffamily\bfseries} 
{ } 
{\thmname{#1}\thmnumber{ #2}\thmnote{ \normalfont(#3)}}
\theoremstyle{mytheorem}
\newtheorem{theorem}{Theorem}[section]
\newtheorem{lemma}[theorem]{Lemma}
\newtheorem{proposition}[theorem]{Proposition}
\newtheorem{corollary}[theorem]{Corollary}
\newtheorem{problem}[theorem]{Problem}
\newtheorem*{theorem*}{Theorem}
\newtheorem*{lemma*}{Lemma}
\newtheorem*{proposition*}{Proposition}
\newtheorem*{corollary*}{Corollary}
\newtheorem*{conjecture*}{Conjecture}
\newtheorem*{problem*}{Problem}
\theoremstyle{mydefinition}
\newtheorem{definition}[theorem]{Definition}
\newtheorem{remark}[theorem]{Remark}
\newtheorem{notation}[theorem]{Notation}
\newtheorem*{definition*}{Definition}
\newtheorem*{remark*}{Remark}
\newtheorem*{example*}{Example}
\newtheorem*{notation*}{Notation}
\newtheorem*{reminder*}{Reminder}
\DeclareMathOperator{\GL}{GL}
\DeclareMathOperator{\SU}{SU}
\DeclareMathOperator{\id}{id}
\DeclareMathOperator{\PGL}{PGL}
\DeclareMathOperator{\Ind}{Ind}
\DeclareMathOperator{\Hom}{Hom}
\DeclareMathOperator{\End}{End}
\DeclareMathOperator{\Ext}{Ext}
\DeclareMathOperator{\Mat}{Mat}
\DeclareMathOperator{\image}{im}
\DeclareMathOperator{\Hilb}{Hilb}
\DeclareMathOperator{\Spec}{Spec}
\DeclareMathOperator{\diag}{diag}
\DeclareMathOperator{\Irrep}{Irrep}
\DeclareMathOperator{\s}{\mathfrak{s}}
\DeclareMathOperator{\Rep}{Rep}
\newcommand{\defn}[1]{{\normalfont\sffamily#1}}
\newcommand{\act}{\mathbin{\vcenter{\hbox{\scalebox{0.75}{$\bullet$}}}}}
\newcommand{\inv}{\ensuremath{^{-1}}}
\newcommand{\Id}{\mathbbm{1}}
\newcommand{\say}[1]{``#1"}
\newcommand{\inner}[1]{\left\langle#1\right\rangle}
\newcommand{\GIT}{\mathbin{\mathchoice{/\mkern-6mu/}{/\mkern-6mu/}{/\mkern-5mu/}{/\mkern-5mu/}}}
\def\diamondprod#1#2{\ooalign{\hidewidth$#1\middiamond$\hidewidth\cr$#1\prod$\cr}}
\def\middiamond{\mathchoice
	{\vcenter{\hbox{\phantom{$\displaystyle\prod$}$\diamond$\phantom{$\displaystyle\prod$}}}}
	{\vcenter{\hbox{\phantom{$\prod$}$\diamond$\phantom{$\prod$}}}}
	{\vcenter{\hbox{\phantom{$\scriptstyle\prod$}$\scriptstyle\diamond$\phantom{$\scriptstyle\prod$}}}}
	{\vcenter{\hbox{\phantom{$\scriptscriptstyle\prod$}$\scriptscriptstyle\diamond$\phantom{$\scriptscriptstyle\prod$}}}}
}
\DeclareMathOperator*\dprod{\mathpalette\diamondprod\relax}
\newcolumntype{L}{>{\raggedright\arraybackslash}X}
\newcolumntype{R}{>{\raggedleft\arraybackslash}X}
\newcolumntype{C}{>{\centering\arraybackslash}X}
\newlength{\PRLlen}
\newcommand*{\trule}[1]{\settowidth{\PRLlen}{#1}\advance\PRLlen by -\textwidth\divide\PRLlen by -2\noindent\makebox[\the\PRLlen]{\resizebox{\the\PRLlen}{1pt}{$\blacktriangleleft$}}\raisebox{-.5ex}{#1}\makebox[\the\PRLlen]{\resizebox{\the\PRLlen}{1pt}{$\blacktriangleright$}}}
\let\originalleft\left
\let\originalright\right
\renewcommand{\left}{\mathopen{}\mathclose\bgroup\originalleft}
\renewcommand{\right}{\aftergroup\egroup\originalright}
\newcommand{\textover}[3][l]{\makebox[\widthof{#3}][#1]{#2}}
\newcommand{\nbracket}[2][]{\sbox0{\mathsurround=0pt$#1\{$}\sbox2{\{}\ifdim\ht0=\ht2\{\kern-.625\wd2 \{#2\}\kern-.625\wd2 \}\else\mathopen{#1\{\kern-.7\wd0 #1\{}#2\mathclose{#1\}\kern-.7\wd0 #1\}}\fi}
\newdimen\deltay
\def\Ddot#1#2(#3,#4,#5,#6){\deltay=#6\setbox1=\hbox to0pt{\smash{\dotcnt=1
			\kern#3\loop\raise\dotcnt\deltay\hbox to0pt{\hss#2}\kern#5\ifnum\dotcnt<#1
			\advance\dotcnt 1\repeat}\hss}\setbox2=\vtop{\box1}\ht2=#4\box2}
\newsavebox\myLogoBox
\savebox\myLogoBox{%
\begin{tikzpicture}[remember picture]
\begin{scope}[transform shape,scale=0.125]
\fill[black] (16,0) -- (16.1,0) -- (16.1,16.1) -- (0,16.1);
\fill[white] (8.45,0) -- (9.925,0) -- (9.925,4.65) -- (8.45,4.65);
\fill[white] (10.175,0) -- (12.845,0) -- (12.845,5.125) -- (10.175,5.125);
\draw[black,line width=0.2mm] (11.51,3.6675) circle (0.755cm);
\fill[white] (13.095,0) -- (14.57,0) -- (14.57,4.65) -- (13.095,4.65);
\fill[white] (8.775,4.9) -- (9.925,4.9) -- (9.925,5.35) -- (13.095,5.35) -- (13.095,4.9) -- (14.245,4.9) -- (14.245,5.875) -- (14.02,5.875) -- (14.02,9.385) -- (13.72,9.385) -- (13.72,11.175) -- (9.3,11.175) -- (9.3,9.385) -- (9,9.385) -- (9,5.875) -- (8.775,5.875);
\fill[black] (9.95,5.875) -- (10.2,5.875) -- (10.2,8.2) -- (10.75,8.2) -- (10.75,8.65) -- (9.95,8.65);
\fill[black] (11.11,5.875) -- (11.36,5.875) -- (11.36,8.2) -- (11.91,8.2) -- (11.91,8.65) -- (11.11,8.65);
\fill[black] (12.27,5.875) -- (12.52,5.875) -- (12.52,8.2) -- (13.07,8.2) -- (13.07,8.65) -- (12.27,8.65);
\fill[black] (9.515,9.29) -- (9.68,9.025) -- (13.34,9.025) -- (13.505,9.29);
\fill[black] (10.65,10.2) -- (10.8,10.2) -- (10.8,10.6) -- (11.01,10.6) -- (11.01,10.825) -- (10.65,10.825);
\fill[black] (11.33,10.2) -- (11.48,10.2) -- (11.48,10.6) -- (11.69,10.6) -- (11.69,10.825) -- (11.33,10.825);
\fill[black] (12.01,10.2) -- (12.16,10.2) -- (12.16,10.6) -- (12.37,10.6) -- (12.37,10.825) -- (12.01,10.825);
\fill[white] (9.85,11.45) -- (10,12.45) -- (13.02,12.45) -- (13.17,11.45);
\fill[white] (10.1,12.7) -- (12.92,12.7) -- (11.51,13.525);
\end{scope}
\end{tikzpicture}%
}
\newcommand{\tpmod}[1]{{\@displayfalse\pmod{#1}}}
\patchcmd{\@maketitle}{\LARGE}{\bf\LARGE\sffamily}{}{}
\patchcmd{\@maketitle}{\large\lineskip}{\Large\lineskip}{}{}
\renewcommand*{\@textcolor}[3]{\protect\leavevmode\begingroup\color#1{#2}#3\endgroup}
\newcommand{\leqnomode}{\tagsleft@true\let\veqno\@@leqno}
\newcommand{\reqnomode}{\tagsleft@false\let\veqno\@@eqno}
\newcommand{\tpitchfork}{\vbox{\baselineskip\z@skip\lineskip-.52ex\lineskiplimit\maxdimen\m@th\ialign{##\crcr\hidewidth\smash{$-$}\hidewidth\crcr$\pitchfork$\crcr}}}
\renewcommand{\twoheadrightarrow}{\mathrel{\text{\two@rightarrow}}}
\newcommand{\two@rightarrow}{\sbox0{$\m@th\rightarrow$}\smash{\rlap{\kern0.1\wd0\clipbox{{.3\width} {-\height} 0pt {-\height}}{$\m@th\rightarrow$}}}$\m@th\rightarrow$}
\def\smallunderbrace#1{\mathop{\vtop{\m@th\ialign{##\crcr$\hfil\displaystyle{#1}\hfil$\crcr\noalign{\kern3\p@\nointerlineskip}\tiny\upbracefill\crcr\noalign{\kern3\p@}}}}\limits}
\title{DAHAs of Type $C^\vee C_n$ and Character Varieties}
\author{Oleg Chalykh}
\author{Bradley Ryan}
\affil{}
\date{\sffamily\today}
\begin{document}

\maketitle\thispagestyle{empty}

\begin{abstract}
\noindent This paper studies the spherical subalgebra of the double affine Hecke algebra of type $C^\vee C_n$ and relates it, at the classical level $q = 1$, to a certain character variety of the four-punctured Riemann sphere. This establishes a conjecture from \cite{etingofganoblomkov}. As a by-product, we find a completed phase space for the trigonometric van Diejen system, explicitly integrate its dynamics and explain how it can be obtained via Hamiltonian reduction.
\end{abstract}

\section{Introduction}\label{sec: intro}

The double affine Hecke algebras (DAHAs) were invented by I. Cherednik who famously used them to prove Macdonald's conjectures \cite{cherednikJan95,cherednikDec95}. Their rational version, known as Cherednik algebras, was introduced and studied in the seminal paper by P. Etingof and V. Ginzburg \cite{etingofginzburg}. Since then, Cherednik algebras and DAHAs have found a wide range of mathematical uses and by now there is vast literature devoted to these classes of algebras.

One insight of \cite{etingofginzburg} was that the spherical subalgebra of the Cherednik algebra for a Coxeter group $W \subseteq \GL(V)$ provides an interesting Poisson deformation of the orbifold $T^\ast V/W$. In particular, for the symmetric group $W = S_n$ \cite{etingofginzburg} related the spherical subalgebra, at the classical level $t = 0$, to the Calogero-Moser space
\begin{equation*}
\mathcal{C}_n = \{X,Y \in \Mat_{n \times n}(\mathbb{C}) : \rank([X,Y] - \Id_n) = 1\}\GIT\GL_n(\mathbb{C}),
\end{equation*}
previously studied by G. Wilson \cite{wilson}. The spaces $\mathcal C_n$ have their origin in the theory of Hamiltonian reduction and integrable systems \cite{kazhdankostantsternberg}. These are affine smooth varieties (of dimension $2n$) that can be viewed as completed phase spaces for the classical Calogero-Moser particle system. In order to formulate this result of \cite{etingofginzburg}, we recall that the Cherednik algebra $\mathsf{H}_{t,c}$ of $W$ contains the group algebra $\mathbb{C}W$, and the spherical subalgebra of $\mathsf{H}_{t,c}$ is defined as $e\mathsf{H}_{t,c}e$, where $e$ is the group symmetriser. Recall also that the spherical subalgebra becomes commutative at the classical level $t = 0$.

\begin{theorem}[{cf. \cite[Theorem 1.23]{etingofginzburg}}]\label{thrm: EG isomorphism}
For $W = S_{n}$ and $c \neq 0$, we have an isomorphism 
\begin{equation*}
e\mathsf{H}_{0,c}e \cong \mathbb{C}[\mathcal{C}_n]
\end{equation*}
between the spherical subalgebra and the algebra of regular functions on the Calogero-Moser space. Equivalently, $\Spec(e\mathsf{H}_{0,c}e)\cong \mathcal{C}_n$ as affine algebraic varieties. 
\end{theorem}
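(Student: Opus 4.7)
The plan is to realise both $e\mathsf{H}_{0,c}e$ and $\mathbb{C}[\mathcal{C}_n]$ as flat filtered Poisson deformations of the orbifold coordinate ring $\mathbb{C}[\mathfrak{h}\oplus\mathfrak{h}^\ast]^{S_n}$, and then to identify them via an explicit comparison map whose associated graded is the identity. On the algebraic side I would invoke the PBW theorem for the rational Cherednik algebra: this filters $\mathsf{H}_{0,c}$ with $\operatorname{gr}\mathsf{H}_{0,c}\cong \mathbb{C}[\mathfrak{h}\oplus\mathfrak{h}^\ast]\rtimes S_n$, and passing to $e$-spherical parts yields $\operatorname{gr}(e\mathsf{H}_{0,c}e)\cong \mathbb{C}[\mathfrak{h}\oplus\mathfrak{h}^\ast]^{S_n}$. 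In particular $e\mathsf{H}_{0,c}e$ is a commutative, finitely generated Poisson domain of Krull dimension $2n$, so $\Spec(e\mathsf{H}_{0,c}e)$ is a genuine affine variety. On the geometric side I would use Wilson's Hamiltonian reduction description of $\mathcal{C}_n$ as $\mu^{-1}(\mathcal{O})\GIT\GL_n$ for the conjugation moment map $\mu\colon T^\ast\Mat_{n\times n}(\mathbb{C})\to\mathfrak{gl}_n^\ast$, where $\mathcal{O}$ is the coadjoint orbit of a rank-one perturbation of a scalar matrix (this is where the hypothesis $c\neq 0$ enters crucially). Diagonalising $X$ on a generic open orbit and extracting symmetric functions of the eigenvalues equips $\mathbb{C}[\mathcal{C}_n]$ with a filtration whose associated graded is again $\mathbb{C}[\mathfrak{h}\oplus\mathfrak{h}^\ast]^{S_n}$.

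Next I would construct a comparison homomorphism $\Phi\colon \mathbb{C}[\mathcal{C}_n]\to e\mathsf{H}_{0,c}e$. The classical Dunkl embedding $\mathsf{H}_{0,c}\hookrightarrow \mathbb{C}[\mathfrak{h}_{\mathrm{reg}}\times\mathfrak{h}^\ast]\rtimes S_n$ realises $e\mathsf{H}_{0,c}e$ concretely as a Poisson algebra of $S_n$-invariants, generated by the $e\cdot\sum x_i^k$ and by the spherical Dunkl Hamiltonians corresponding to $\sum y_i^k$. I would match these with the trace generators $\operatorname{tr}(X^k)$ and $\operatorname{tr}(Y^k)$ of $\mathbb{C}[\mathcal{C}_n]$, and extend this matching to traces of mixed monomials $\operatorname{tr}(X^{a_1}Y^{b_1}\cdots)$ compatibly with the Poisson brackets on both sides. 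This defines $\Phi$ as a Poisson homomorphism; by construction it is filtered.

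The main obstacle is to prove that $\Phi$ is an isomorphism, and this is the heart of the argument. Since $\Phi$ is a filtered map between commutative Noetherian domains of equal Krull dimension $2n$, it suffices, by a standard graded-Nakayama argument, to verify that the induced map on associated graded algebras is an isomorphism $\mathbb{C}[\mathfrak{h}\oplus\mathfrak{h}^\ast]^{S_n}\to \mathbb{C}[\mathfrak{h}\oplus\mathfrak{h}^\ast]^{S_n}$. The substantive content lies in surjectivity on the associated graded, which reduces (via Newton's identities) to the claim that the matrix traces $\operatorname{tr}(X^{a_1}Y^{b_1}\cdots)$ generate $\mathbb{C}[\mathcal{C}_n]$; this is Procesi's first fundamental theorem for $\GL_n$-invariants applied inside the Hamiltonian reduction, combined with Wilson's smoothness theorem to control the moment-map fibre. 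Once this generation statement is in hand, both filtrations are identified, the associated graded map becomes the identity, and hence $\Phi$ itself is the desired isomorphism.
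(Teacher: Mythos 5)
Note first that the paper does not prove this statement at all: it is quoted verbatim from Etingof--Ginzburg as motivating background, with no argument supplied. So the comparison can only be against the strategy of the cited source and of the analogous Oblomkov/DAHA argument that the rest of the paper adapts. That strategy is \emph{not} a direct filtered-deformation comparison. Instead, one uses the Dunkl embedding to identify a localization of $e\mathsf{H}_{0,c}e$ with $\mathbb{C}[\mathfrak{h}_{\mathrm{reg}}\times\mathfrak{h}^\ast]^{S_n}$, identifies the corresponding open chart of $\mathcal{C}_n$ (where $X$ is regular semisimple) with the same ring, obtains a rational map between $\Spec(e\mathsf{H}_{0,c}e)$ and $\mathcal{C}_n$, and then extends it to a global regular isomorphism by exploiting normality of the spherical side, smoothness and irreducibility of $\mathcal{C}_n$, and a codimension-two argument. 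This is exactly the template the present paper follows for $C^\vee C_n$ (Proposition \ref{prop: Upsilon is isomorphic on a subset}, Theorem \ref{thrm: main result}).

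The decisive gap in your proposal is the sentence ``I would match these with the trace generators \dots and extend this matching to traces of mixed monomials compatibly with the Poisson brackets on both sides. This defines $\Phi$ as a Poisson homomorphism.'' Prescribing values on a generating set does not define a homomorphism: you must verify that every relation holding among the functions $\operatorname{tr}(X^{a_1}Y^{b_1}\cdots)$ on $\mathcal{C}_n$ is also satisfied by the corresponding elements of $e\mathsf{H}_{0,c}e$ (or, in the Dunkl realization, by the corresponding invariant differential--difference operators). Those relations are exactly the second fundamental theorem relations intersected with the moment-map ideal, and checking them is the entire content of the theorem, not a routine compatibility. The Poisson-bracket compatibility you invoke does not resolve this, since two commutative Poisson algebras with the same bracket can fail to be isomorphic as algebras, and in any case you have not shown the bracket uniquely determines the multiplication here. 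Likewise, knowing that both sides are filtered Poisson deformations of $\mathbb{C}[\mathfrak{h}\oplus\mathfrak{h}^\ast]^{S_n}$ is not enough: Poisson deformations of this singular variety are not automatically unique, and Etingof--Ginzburg's argument is precisely what pins the deformation down. Once the existence of $\Phi$ is granted, your graded-Nakayama and Procesi-generation remarks are reasonable, but they are downstream of the missing construction. To repair the argument you would need to either carry out the localization-and-extension scheme sketched above, or appeal to a Hamiltonian/quantum-Hamiltonian-reduction realization of $e\mathsf{H}_{0,c}e$ that produces the map functorially.
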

This has the following representation-theoretic implication.
\begin{theorem}[{\cite[Theorem 1.24]{etingofginzburg}}]\label{thrm: EG representation theory}
For $W = S_n$, every finite-dimensional irreducible representation of $\mathsf{H}_{0,c}$ has dimension $n!$, and such representations are in 1-1 correspondence with points of $\mathcal C_n$, i.e.
\begin{equation*}
\Irrep(\mathsf{H}_{0,c}) \cong \mathcal{C}_n.
\end{equation*}
\end{theorem}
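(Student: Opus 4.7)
The plan is to deduce this representation-theoretic statement from Theorem 1.23 via standard PI theory of algebras finite over their center. First I would check that at $t=0$ the algebra $\mathsf{H}_{0,c}$ is module-finite over its center $Z(\mathsf{H}_{0,c})$. By the PBW theorem for rational Cherednik algebras, $\mathsf{H}_{0,c}\cong \mathbb{C}[V]\otimes\mathbb{C}W\otimes\mathbb{C}[V^{\ast}]$ as a vector space, and at $t=0$ the invariant subalgebras $\mathbb{C}[V]^{W}$ and $\mathbb{C}[V^{\ast}]^{W}$ lie in $Z(\mathsf{H}_{0,c})$. Since $\mathbb{C}[V]$ is finite over $\mathbb{C}[V]^{W}$ (and similarly for $V^{\ast}$), $\mathsf{H}_{0,c}$ is finite over its centre. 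Next, the Satake-type map $Z(\mathsf{H}_{0,c})\to e\mathsf{H}_{0,c}e$, $z\mapsto ez$, is an algebra isomorphism at $t=0$; composing with Theorem \ref{thrm: EG isomorphism} yields $Z(\mathsf{H}_{0,c})\cong\mathbb{C}[\mathcal{C}_{n}]$.

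Next I would establish that $\mathsf{H}_{0,c}$ is an Azumaya algebra of constant rank $(n!)^{2}$ over its centre. The PI degree can be identified as $n!$: at a generic point $(X,Y)\in\mathcal{C}_{n}$ with $X$ semisimple and regular, a Dunkl-type construction exhibits an irreducible module of dimension $n!$, matching the upper bound coming from the PBW filtration. This shows the Azumaya locus $U\subseteq\mathcal{C}_{n}$ is a dense open subset. The essential further input, and the step I expect to be the principal obstacle, is the smoothness of $\mathcal{C}_{n}$ (proved by Wilson), combined with the general principle for symplectic reflection algebras at $t=0$ that the Azumaya locus coincides with the smooth locus of the centre; this is established by a deformation-theoretic argument exploiting the Poisson structure on $Z$ and the lack of nontrivial deformations at smooth points. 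Smoothness of $\mathcal{C}_{n}$ then forces $U=\mathcal{C}_{n}$, so $\mathsf{H}_{0,c}$ is everywhere Azumaya of rank $(n!)^{2}$.

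Finally I would invoke the classical correspondence for Azumaya algebras: if $A$ is Azumaya of rank $d^{2}$ over a finitely generated commutative $\mathbb{C}$-algebra $Z$, then for every maximal ideal $\mathfrak{m}\subset Z$ the fibre $A\otimes_{Z}(Z/\mathfrak{m})$ is isomorphic to $\Mat_{d}(\mathbb{C})$ and thus has a unique simple module of dimension $d$; conversely, on any finite-dimensional simple $A$-module, $Z$ acts by scalars by Schur's lemma, so the module factors through exactly one such fibre. Applied here with $d=n!$ and $Z\cong\mathbb{C}[\mathcal{C}_{n}]$, this yields the bijection $\Irrep(\mathsf{H}_{0,c})\cong\mathcal{C}_{n}$ with every irreducible representation of dimension $n!$, as required.
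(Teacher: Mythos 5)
This theorem is stated in the paper only as a citation to Etingof--Ginzburg and is not proved there; the paper uses it as motivating background. Your proposal is a faithful reconstruction of the actual Etingof--Ginzburg argument rather than a different route: finiteness over the centre via $\mathbb{C}[V]^W$ and $\mathbb{C}[V^*]^W$ being central at $t=0$, the Satake isomorphism $z\mapsto ze$, the identification of the PI degree as $n!$, the coincidence of the Azumaya locus with the smooth locus of $\Spec Z(\mathsf{H}_{0,c})$ (their Theorem 3.1), and then Wilson's smoothness of $\mathcal{C}_n$ to force the Azumaya locus to be everything, followed by the standard Azumaya dictionary. The one place your sketch is vaguer than the original is the ``Azumaya = smooth'' step; in Etingof--Ginzburg this rests on the Poisson geometry of $Z(\mathsf{H}_{0,c})$ (the non-Azumaya locus is a proper closed Poisson subvariety, which cannot exist inside a smooth connected symplectic affine variety) together with a semicontinuity/deformation argument to pin down the generic fibre dimension, and citing just ``lack of nontrivial deformations at smooth points'' elides the key use of the symplectic structure. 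That said, you correctly flag this as the essential technical input, and the overall logical skeleton you give is sound and matches the cited source.
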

This motivates studying the varieties $\Spec(e\mathsf{H}_{0,c}e)$ in other situations. For example, Etingof and Ginzburg consider symplectic reflection groups $W = S_n \ltimes G^n$ for finite subgroups $G \subseteq \SU(2)$ and prove a generalisation of Theorem \ref{thrm: EG isomorphism} for the corresponding symplectic reflection algebras, see \cite[Theorem 11.16]{etingofginzburg}. 

Another result of that kind, closely related to the subject of the present paper, was obtained by A. Oblomkov in \cite{oblomkovJun04}, where he studied the analogous problem for the DAHA $H_{q,\tau}$ of type $\GL_n$. Recall that this DAHA contains the finite Hecke algebra of $S_n$, so one defines its spherical subalgebra as $e_\tau H_{q,\tau}e_\tau$, where $e_\tau$ is the Hecke symmetriser. Again, the spherical subalgebra becomes commutative at the classical level $q = 1$ and we have the following result.

\begin{theorem}[{\cite{oblomkovJun04}}]\label{thrm: Oblomkov isomorphism}
For $\tau$ not a root of unity, we have an isomorphism
\begin{equation*}
\Spec(e_\tau H_{1,\tau}e_\tau) \cong \text{CM}_\tau,
\end{equation*}
where
\begin{equation*}
CM_\tau = \{X,Y \in \GL_n(\mathbb{C}) : \rank(\tau XYX\inv Y\inv - \tau\inv\Id_n) = 1\}\GIT\GL_n(\mathbb{C}).
\end{equation*}
\end{theorem}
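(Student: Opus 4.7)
The plan is to exploit the faithful Dunkl--Cherednik polynomial representation of $H_{q,\tau}$, which at $q=1$ embeds the DAHA inside a twisted group algebra of Laurent polynomials on the torus $(\mathbb{C}^\ast)^n$. Restricting to the idempotent $e_\tau$, the algebra $e_\tau H_{1,\tau} e_\tau$ becomes commutative and is generated by two distinguished commutative subalgebras: the Macdonald operators, which are symmetric functions in the $Y$-generators, and their analogues built from the $X$-generators. On the geometric side, $\mathbb{C}[\text{CM}_\tau]$ should be generated by trace functions $\operatorname{tr}(X^k)$, $\operatorname{tr}(Y^k)$, together with finitely many ``mixed'' traces in $X$ and $Y$. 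The strategy is to construct the isomorphism by matching these generating sets.

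Concretely, I would define a map $\varphi : e_\tau H_{1,\tau} e_\tau \to \mathbb{C}[\text{CM}_\tau]$ by sending the $e_\tau$-sandwiched $X$- and $Y$-power sums to the corresponding trace functions, and then check well-definedness via the classical limit $q\to 1$ of the Macdonald operators, which recover the Hamiltonian of the trigonometric Ruijsenaars--Schneider system whose completed phase space is known to be $\text{CM}_\tau$. The Cherednik $\SL_2(\mathbb{Z})$ symmetry of $H_{q,\tau}$, which swaps the $X$- and $Y$-subalgebras, is mirrored by the involution $(X,Y)\mapsto(Y^{-1},X)$ of $\text{CM}_\tau$, and this cuts the verification in half. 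For injectivity and surjectivity, one observes that both $\Spec(e_\tau H_{1,\tau} e_\tau)$ and $\text{CM}_\tau$ are irreducible affine $2n$-dimensional varieties, and both admit a generic description as an $S_n$-quotient of $(\mathbb{C}^\ast)^{2n}$ away from walls; the ``mixed'' trace functions needed on the right can be realised as images of iterated commutators of $e_\tau X^a e_\tau$ and $e_\tau Y^b e_\tau$ inside $H_{1,\tau}$.

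The main obstacle is this last matching: the interaction between the $X$- and $Y$-subalgebras of the DAHA is controlled by the Hecke cross-relations, while on the geometric side it is controlled by the rank-one constraint $\tau XYX^{-1}Y^{-1} - \tau^{-1}\Id_n$. Translating one into the other requires a delicate computation in the Cherednik representation --- essentially, computing the commutator of a Macdonald operator with multiplication by a symmetric polynomial and recognising the answer as the trace of a rank-one perturbation. The assumption that $\tau$ is not a root of unity enters precisely here: it ensures that the denominators in the Dunkl operators are invertible, that both algebras are reduced, and that the correspondence between spectral data on the two sides is a bijection rather than merely a finite cover.
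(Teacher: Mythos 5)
Your proposal captures several ingredients that do appear in Oblomkov's argument (the Dunkl--Cherednik/Basic representation, the $X\leftrightarrow Y$ duality, the observation that both sides are irreducible $2n$-dimensional and look like $W$-quotients of a torus generically), but the strategy of building $\varphi$ by matching generating sets and then ``checking relations'' does not reflect how the proof actually goes, and several steps as written would fail.

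First, the claim that $e_\tau H_{1,\tau} e_\tau$ is generated by the two subalgebras $e_\tau\mathbb{C}[X]^W e_\tau$ and $e_\tau\mathbb{C}[Y]^W e_\tau$ is not a starting point of the argument; it is at best a corollary. More seriously, your device of producing the ``mixed'' traces on the right from \emph{iterated commutators} of $e_\tau X^a e_\tau$ and $e_\tau Y^b e_\tau$ cannot work: at $q=1$ the spherical subalgebra is commutative, so those commutators are zero. (If you mean Poisson brackets, that is a different statement and still would not by itself produce an algebra map.) Defining a map on generators and ``verifying'' it in the classical limit of Macdonald operators does not give well-definedness: one would have to know a full presentation of both algebras, which is precisely what is not available.

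Second, the passage from ``generically a bijection'' to an isomorphism has a real gap. Having a dominant map between two irreducible affine varieties of the same dimension is far from an isomorphism. Oblomkov's argument closes this gap using a structural theorem for $e_\tau H_{1,\tau}e_\tau$ (restated in the paper as Theorem~\ref{thrm: spherical subalgebra}): the spherical subalgebra equals the centre $\mathcal{Z}$, and $\Spec(\mathcal{Z})$ is \emph{normal} and Cohen--Macaulay. The actual proof then constructs, not a map on generators, but an explicit local chart $\Upsilon\colon \mathcal{U}_{\bm\tau}/W \to CM_\tau$ by realising $X$ and $Y$ as concrete $n\times n$ operator-valued matrices coming from the Basic Representation, shows it is an open immersion, uses the duality to get a second chart with the roles of $X$ and $Y$ exchanged, and finally shows that the rational map $\Spec(\mathcal Z)\dashrightarrow CM_\tau$ is regular in codimension one and therefore regular everywhere by normality; the inverse is treated by the same codimension-two and smoothness argument on $CM_\tau$. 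None of this appears in your sketch, and without the normality of $\Spec(e_\tau H_{1,\tau}e_\tau)$ the gluing step cannot be carried out. Your intuition about where the rank-one condition and the non-root-of-unity hypothesis enter is on the right track, but the proof mechanism you describe is not the one that works.
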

The space $CM_\tau$ was previously identified as a completed phase space for the Ruijsenaars--Schneider system \cite{fockrosly}. Oblomkov also establishes an analogue to Theorem \ref{thrm: EG representation theory}: finite-dimensional irreducible representations of $H_{1,\tau}$ are in 1-1 correspondence with points of $CM_\tau$. 

A further generalisation was proposed in \cite{etingofoblomkovrains,etingofganoblomkov} by way of introducing generalised DAHAs associated to the star-shaped Dynkin quivers of type $\widetilde{D}_4, \widetilde{E}_6, \widetilde{E}_7, \widetilde{E}_8$. In these cases, \cite[Conjecture 5.1.1]{etingofganoblomkov} states that the corresponding spherical subalgebras at the classical level $q = 1$ are isomorphic to certain character varieties of a punctured Riemann sphere. While some important steps towards a proof of this conjecture have been made in \cite{etingofganoblomkov}, it remained open so far.

Our main result is a proof of the aforementioned conjecture for the generalised DAHA of type $\widetilde{D}_4$. This generalised DAHA is isomorphic to the usual DAHA of type $C^\vee C_n$ which makes it amenable to the methods of \cite{oblomkovJun04}. To state the result, we need a replacement for the above space $CM_\tau$. Let $C_i=[\Lambda_i] \subseteq \GL_{2n}(\mathbb{C})$ for $i = 1,2,3,4$ denote the conjugacy classes represented by the matrices
\begin{equation}\label{eq: our eigendata}
\begin{aligned}
\Lambda_1 &= \diag(\smallunderbrace{-k_0\inv,\dots,-k_0\inv}_n,\smallunderbrace{k_0,\dots,k_0}_n),\\
\Lambda_2 &= \diag(\smallunderbrace{-u_0\inv,\dots,-u_0\inv}_n,\smallunderbrace{u_0,\dots,u_0}_n),\\
\Lambda_3 &= \diag(\smallunderbrace{-u_n\inv,\dots,-u_n\inv}_n,\smallunderbrace{u_n,\dots,u_n}_n),\\
\Lambda_4 &= \diag(\smallunderbrace{-k_n\inv,\dots,-k_n\inv}_n,\smallunderbrace{k_nt^{-2},\dots,k_nt^{-2}}_{n-1},\smallunderbrace{k_nt^{2n-2}}_1),
\end{aligned}
\end{equation}
where $k_0,k_n,t,u_0,u_n \in \mathbb{C}^\ast$ are parameters. We define an analogue of the varieties $\mathcal C_n$ and $CM_\tau$:
\begin{equation}\label{eq: CM space}
\mathcal{M}_{n} \coloneqq \{A_i \in C_i : A_1A_2A_3A_4 = \Id_{2n}\}\GIT\GL_{2n}(\mathbb{C}).
\end{equation}
This is a $\GL_{2n}(\mathbb{C})$-character variety of the Riemann sphere with four punctures, with the conjugacy class $C_i=[\Lambda_i]$ attached to the punctures (see Definition \ref{def: character variety} below). For generic parameters, $\mathcal{M}_{n}$ is a smooth affine variety of dimension $2n$. Our main result may now be stated.

\begin{theorem}\label{thrm: our isomorphism}
Let $\mathcal{H}_{q,\bm{\tau}}$ denote the DAHA of type $C^\vee C_n$ with parameters $q$, $\bm{\tau} = (k_0,k_n,t,u_0,u_n)$, and $e_\tau\mathcal{H}_{q,\bm{\tau}}e_\tau$ the corresponding spherical subalgebra. Then, for $q = 1$ and generic parameters $\bm{\tau}$, we have an isomorphism
\begin{equation}\label{eq: main result}
e_\tau\mathcal{H}_{1,\bm{\tau}}e_\tau \cong \mathbb{C}[\mathcal{M}_{n}].
\end{equation}
\end{theorem}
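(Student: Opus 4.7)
My plan follows the template of Oblomkov's proof of Theorem \ref{thrm: Oblomkov isomorphism}, adapted to the five-parameter DAHA of type $C^\vee C_n$ and the four-punctured character variety $\mathcal{M}_n$. The argument splits into three parts: (i) construct a homomorphism $\Phi \colon e_\tau\mathcal{H}_{1,\bm{\tau}}e_\tau \to \mathbb{C}[\mathcal{M}_n]$; (ii) establish its injectivity; and (iii) prove its surjectivity. The bridge between the two sides should be a Hamiltonian-reduction picture: the eigenvalue data in \eqref{eq: our eigendata} is exactly what is required to realise $\mathcal{M}_n$ as a multiplicative quiver variety for the star-shaped $\widetilde{D}_4$ quiver with central vertex of dimension $2n$ and four outer vertices of dimension $n$, with moment-map values dictated by $\bm{\tau}$. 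This viewpoint also underlies the by-product announced in the abstract, identifying $\mathcal{M}_n$ as a completed phase space for the trigonometric van Diejen system.

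For step (i), I would combine this quiver-variety description of $\mathcal{M}_n$ with the faithful Cherednik-Noumi polynomial representation of $\mathcal{H}_{q,\bm{\tau}}$ on $\mathbb{C}[X_1^{\pm 1},\dots,X_n^{\pm 1}]$. At $q=1$ the spherical subalgebra becomes commutative and contains, on one hand, the $W_n$-symmetric functions of the classical $X$-operators and, on the other, the $W_n$-symmetric functions of the $Y$-operators, which are the classical van Diejen Hamiltonians; $\Phi$ should send these to trace functions of $A_1, A_4$ and of $A_2, A_3$ on $\mathcal{M}_n$ respectively, and extend by multiplicativity. For step (ii), for generic $\bm{\tau}$ and each closed point $[(A_1,A_2,A_3,A_4)] \in \mathcal{M}_n$, I would construct an irreducible representation of $\mathcal{H}_{1,\bm{\tau}}$ of dimension $|W_n| = 2^n n!$: the Hecke generators $T_0, T_n$ and the $X$-operators act via data read off from $A_1$ and $A_4$, whereas $T_1,\dots,T_{n-1}$ together with the $Y$-operators act through $A_2, A_3$, with the relation $A_1A_2A_3A_4 = \Id_{2n}$ encoding the cyclic DAHA relation that binds the two sides. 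The central characters of the spherical parts of these modules give a morphism $\mathcal{M}_n \to \Spec(e_\tau\mathcal{H}_{1,\bm{\tau}}e_\tau)$ dual to $\Phi$ which, by distinctness of the central characters for generic $\bm{\tau}$, is injective at closed points.

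The main obstacle is surjectivity in step (iii). By Procesi/Le Bruyn-Procesi invariant theory, $\mathbb{C}[\mathcal{M}_n]$ is generated by trace functions of monomials in $A_1,\dots,A_4$; the traces of powers of a single $A_i$ are fixed by the conjugacy class $C_i$, so the real content is in the mixed trace words such as $\mathrm{tr}(A_iA_j)$ and longer. I would attack these by first matching Poisson brackets on the two sides, the Goldman brackets on the character variety against the bracket on $e_\tau\mathcal{H}_{1,\bm{\tau}}e_\tau$ induced by the $q$-deformation, and then using that the image of $\Phi$ is necessarily a Poisson subalgebra, together with the dimension count $\dim\mathcal{M}_n = 2n$ and smoothness of $\mathcal{M}_n$ for generic $\bm{\tau}$, to force the image to be all of $\mathbb{C}[\mathcal{M}_n]$. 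Equivalently, density of the van Diejen Hamiltonian flows applied to the $X$-side generators should sweep out enough trace functions. The key technical input, and where I expect most of the labour to concentrate, is an explicit enough description of the low-order van Diejen operators and of their Poisson brackets with the $X$-symmetric functions to match them against specific trace-word generators on the character-variety side; the remaining trace words should then follow by Poisson-algebraic bootstrapping.
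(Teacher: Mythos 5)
Your plan has the right ingredients in mind (Oblomkov's template, the polynomial/Basic Representation, the multiplicative quiver variety picture, the Poisson structure), but there are two genuine gaps that would sink the argument as written.

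First, you never say how the map $\Phi$ is actually constructed. The paper's crucial technical step, which you do not reproduce, is the following: identify $\mathcal{H}_{q,\bm{\tau}}$ with its image under Sahi's Basic Representation inside $\mathscr{D}_q\rtimes\mathbb{C}W$, let $W'\subseteq W$ be the parabolic subgroup generated by $\s_i$, $\s_{ij}$ with $i,j\geq 2$, and observe that the four elements $Z_1=T_0$, $Z_2=T_0^\vee$, $Z_3=ST_n^\vee S^{-1}$, $Z_4=ST_n S^\dagger$ commute with $H_n'$ and hence act on the $2n$-dimensional quotient $M'=e'M$ as explicit $\mathscr{D}_q$-valued $2n\times 2n$ matrices $A_1,\dots,A_4$ (Proposition \ref{prop: matrices of Ai}). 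At $q=1$ these matrices land in the conjugacy classes \eqref{eq: our eigendata} with $A_1A_2A_3A_4=\Id$, giving the coordinate chart $\Upsilon\colon\mathcal{U}/W\to\mathcal{M}_n$. It is this explicit map, not a generators-and-relations homomorphism sending symmetric $X$- and $Y$-functions to traces, that furnishes the birational map $\Spec(\mathcal{Z})\dashrightarrow\mathcal{M}_n$; your step (i) replaces a concrete construction with a wish. Also note the matrix identifications are $X=A_1A_2$, $Y=A_4A_1$, not $A_1A_4$ and $A_2A_3$ as you suggest.

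Second, your surjectivity argument does not work. Having a Poisson subalgebra of $\mathbb{C}[\mathcal{M}_n]$ of the correct Krull dimension, or even whose bracket is nondegenerate, in no way forces it to be all of $\mathbb{C}[\mathcal{M}_n]$: there are proper integral extensions of the same dimension, and Poisson flows ``sweeping out" trace words is not a substitute for an algebraic argument. The paper's route is different and each ingredient is load-bearing: (a) the Deligne--Simpson/quiver analysis in \S\ref{sec: coordinates} (Lemmas \ref{lem: preprojective algbera determined uniquely} and \ref{lem: preprojective algebra module is simple}, via Crawley-Boevey--Shaw) shows that $\Upsilon$ is onto $\mathcal{M}_n^{\bm{\tau}}$, i.e.\ an isomorphism onto a \emph{named} open chart; (b) the duality isomorphism $\varepsilon$ produces a second chart where $Y$ rather than $X$ is diagonalised; (c) the complement of the union of the two charts has codimension two, so normality of $\Spec(\mathcal{Z})$ (from Oblomkov's Theorem \ref{thrm: spherical subalgebra}) makes $\Upsilon$ everywhere regular, and smoothness plus irreducibility of $\mathcal{M}_n$ makes $\Upsilon^{-1}$ everywhere regular. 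You invoke the quiver variety description as ``a bridge" and mention van Diejen flows, but the actual role of the quiver machinery in the proof is to pin down the chart, and the actual surjectivity mechanism is the normality-plus-codimension-two extension, neither of which appears in your sketch. (The Poisson comparison you propose is carried out in the paper, but as Proposition \ref{prop: our isomorphism is a Poisson map} in \S\ref{sec: dynamics}, after the isomorphism is already established, not as a means of proving it.)
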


We also show that the finite-dimensional irreducible representations of $\mathcal{H}_{1,\bm{\tau}}$ are in 1-1 correspondence with points of $\mathcal{M}_n$, and suggest a quantisation of the isomorphism \eqref{eq: main result}.

Our proof of Theorem \ref{thrm: our isomorphism} follows the general strategy of \cite{oblomkovJun04}, albeit this requires a few additional ingredients. Note that it relies crucially on the existence of the so-called Basic Representation for the DAHA. Since such a representation is not available for the generalised DAHAs of type $\widetilde{E}_6$, $\widetilde{E}_7$, $\widetilde{E}_8$, for those cases our approach does not work and the conjecture \cite[Conjecture 5.1.1]{etingofganoblomkov} remains open. It should be mentioned that for $n = 1$, the result of Theorem \ref{thrm: our isomorphism} follows from \cite{oblomkovJan04}, where the spherical subalgebra of the DAHA of $C^\vee C_1$ type is identified with the ring of functions on an affine cubic surface.

The isomorphism \eqref{eq: main result} has a nice application in the theory of integrable systems. Our main result here (see Theorem \ref{thrm: dynamics on CM space in the second chart} below) says that $\mathcal{M}_{n}$ can be viewed as a completed phase space for the trigonometric van Diejen system. This particle system in the quantum setting is intimately related to Koornwinder polynomials. As we demonstrate, the corresponding classical Hamiltonian dynamics on $\mathcal{M}_{n}$ can be explicitly integrated. We also interpret it via Hamiltonian reduction using the Fock--Rosly framework \cite{fockrosly}.

Let us finish the introduction by mentioning some other contexts where the varieties $\mathcal{M}_n$, or closely-related spaces, appear. First, the varieties $\mathcal{M}_n$ can be viewed as monodromy spaces for the Fuchsian systems studied in \cite{kawakami,bertolacafassorubtsov} in the context of isomonodromy deformations and matrix Painlev\'e VI systems (for $n = 1$, it is the affine cubic surface well known in the Painlev\'e VI theory). Second, as a character variety, $\mathcal{M}_n$ has two other avatars: the spaces $\mathcal{M}_\text{dR}$ and $\mathcal{M}_\text{Dol}$ of (weakly) parabolic local systems and Higgs bundles, respectively. By the result of \cite{groechenig}, all three are diffeomorphic to the Hilbert scheme $\Hilb^n(S)$ for a certain surface $S$. Note that $\mathcal{M}_\text{Dol}$ hosts the elliptic Inozemtsev integrable system and is related to the Seiberg--Witten geometry of some $4d$ $\mathcal{N} = 2$ superconformal field theories \cite{argyreschalykhlu21}. Finally, from a more general perspective, there is a geometric approach to the moduli spaces of sheaves on non-commutative deformations of surfaces developed by E. Rains \cite{rains16,rains19}, and also his theory of elliptic DAHAs \cite{rains20} within which the DAHA studied here can be obtained in a trigonometric limit. This gives a moral reason, if not a proof, for the isomorphism \eqref{eq: main result}, by viewing both sides as two kinds of deformations of a particular Hilbert scheme.

\subsection*{Structure of the Paper}

In Section \ref{sec: DAHA}, we recall the main facts about the DAHA of type $C^\vee C_n$. In Section \ref{sec: character varieties}, we discuss character varieties of punctured Riemann surfaces and their basic properties. In Section \ref{sec: irreps}, we analyse a map $\Phi$, constructed in \cite{etingofganoblomkov}, that associates points of $\mathcal{M}_n$ to finite-dimensional representations of $\mathcal{H}_{1,\bm{\tau}}$. In Section \ref{sec: coordinates}, we introduce coordinates on $\mathcal{M}_n$ and show that $\Phi$ restricts to an isomorphism on a suitable coordinate chart. This puts all the ingredients in place to run the rest of the arguments as in \cite{oblomkovJun04}. In Section \ref{sec: isomorphism}, we recall the main steps in \textit{op. cit.}, which then leads us to the isomorphism in Theorem \ref{thrm: our isomorphism} (see Theorem \ref{thrm: main result}). We also suggest a possible way to quantise this isomorphism (see Proposition \ref{prop: quantised main result}). Finally, in Section \ref{sec: dynamics}, we apply our results to the trigonometric van Diejen system: we explicitly describe its dynamics on $\mathcal{M}_n$ and explain how it can be obtained by Hamiltonian reduction.

\subsection*{Acknowledgements}

We are grateful to Yu. Berest, P. Etingof, E. Rains and N. Reshetikhin for stimulating discussions and useful remarks. B. Ryan is supported by a PhD studentship from the UKRI (grant number EP/2434194).

\section{The DAHA \texorpdfstring{of Type $C^\vee C_n$}{}}
\label{sec: DAHA}

The double affine Hecke algebras (DAHAs) were introduced by Cherednik \cite{cherednik92} for reduced affine root systems. Their definition for the non-reduced affine root system of type $C^\vee C_n$ was suggested by Sahi \cite{sahi} and is recalled next. 

\subsection{The DAHA}\label{subsec: DAHA}

\begin{definition}\label{def: DAHA of rank n}
Let $k_0, k_n, t, u_0, u_n, q^{1/2} \in \mathbb{C}^\ast$ and write $\bm{\tau}$ for the parameters $(k_0,k_n,t,u_0,u_n)$. The \defn{double affine Hecke algebra of type $C^\vee C_n$} is the associative algebra $\mathcal{H}_{q,\bm{\tau}}$ over $\mathbb{C}[\bm{\tau}^{\pm1},q^{\pm1/2}]$ generated by the elements $T_0^{\pm1},\dots,T_n^{\pm1}$ and $X_1^{\pm1},\dots,X_n^{\pm1}$, subject to the following relations:
\begin{enumerate}[label=\rom]
	\item $T_0 - T_0\inv = k_0 - k_0\inv$. 
	\item $T_i - T_i\inv = t - t\inv$. \tabto{8cm} ($i=1,\dots,n-1$)
	\item $T_n - T_n\inv = k_n - k_n\inv$. 
	\item $[T_i, T_j] = 0$. \tabto{8cm} ($\abs{i - j} > 1$)
	\item $T_0T_1T_0T_1 = T_1T_0T_1T_0$.
	\item $T_iT_{i+1}T_i = T_{i+1}T_iT_{i+1}$. \tabto{8cm} ($i=1, \dots, n-2$)
	\item $T_{n-1}T_nT_{n-1}T_n = T_nT_{n-1}T_nT_{n-1}$.
	\item $[X_i, X_j] = 0$. \tabto{8cm} ($1 \leq i < j \leq n$)
	\item $[T_i, X_j] = 0$. \tabto{8cm} ($j \neq i,i+1$)
	\item $T_iX_i = X_{i+1}T_i\inv$. \tabto{8cm} ($i=1, \dots, n-1$)
	\item $T_0^\vee - (T_0^\vee)\inv = u_0 - u_0\inv$. \tabto{8cm} ($T_0^\vee \coloneqq q^{-1/2} T_0\inv X_1$)
	\item $T_n^\vee - (T_n^\vee)\inv = u_n - u_n\inv$. \tabto{8cm} ($T_n^\vee \coloneqq X_n\inv T_n\inv$)
\end{enumerate}
\end{definition}

The elements $T_1, \dots, T_n$ generate a subalgebra $H_n \subseteq \mathcal{H}_{q,\bm{\tau}}$ isomorphic to the finite Hecke algebra of type $C_n$. For $k_n = t = 1$, it becomes isomorphic to the group algebra $\mathbb{C}W$ of the Weyl group of type $C_n$, generated by $s_1,\dots,s_n$ subject to $s_i^2 = 1$ and the above relations (iv), (vi), (vii). If $w = s_{i_1}\cdots s_{i_\ell}$ is a reduced decomposition of $w \in W$, we write 
\begin{equation*}
T_w \coloneqq T_{i_1}\cdots T_{i_\ell}.
\end{equation*}
It is well known that $T_w$ does not depend on the choice of a reduced decomposition of $w$. As a vector space, $H_n$ is generated by the elements $T_w$ with $w \in W$. The mapping $T_i \mapsto t$ ($i=1,\dots,n-1$), $T_n\mapsto k_n$ extends to a one-dimensional representation $\chi : T_w \mapsto \tau_w$ of $H_n$. With this representation, one associates the following idempotent, called the \textit{Hecke symmetriser}, in $H_n$ (which becomes the usual group symmetriser in $\mathbb{C}W$ when $k_n = t = 1$): 
\begin{equation}\label{eq: Hecke symmetriser}
e_\tau \coloneqq \frac{1}{\sum_{w \in W}\tau_w^2}\sum_{w \in W} \tau_wT_w.
\end{equation}

\begin{definition}\label{def: spherical subalgebra}
The \defn{spherical subalgebra} of the DAHA $\mathcal{H}_{q,\bm{\tau}}$ is $e_\tau\mathcal{H}_{q,\bm{\tau}}e_\tau$. 
\end{definition}

\begin{notation}\label{not: S and S dagger}
As a way to alleviate notation here on in, we shall introduce the following:
\begin{equation}\label{eq: S and S dagger}
S \coloneqq T_1 \cdots T_{n-1} \qquad\text{and}\qquad S^\dagger \coloneqq T_{n-1} \cdots T_1.
\end{equation}
\end{notation}

The following corollary of the DAHA relations will be important later. 
 
\begin{lemma}\label{lem: product of DAHA elements is the identity}
We have the relation $q^{1/2}T_0T_0^\vee ST_n^\vee T_nS^\dagger = 1$. 
\end{lemma}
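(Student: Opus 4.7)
The plan is to unpack the definitions of $T_0^\vee$ and $T_n^\vee$ given by relations (xi) and (xii) of Definition \ref{def: DAHA of rank n}, collapsing as much of the product as possible, and then to reduce the rest to a purely braid-group identity coming from the cross relations (x).

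First, the boundary factors telescope cleanly:
\begin{equation*}
q^{1/2}T_0 T_0^\vee \;=\; q^{1/2}\, T_0 \cdot q^{-1/2}\, T_0^{-1} X_1 \;=\; X_1, \qquad T_n^\vee T_n \;=\; X_n^{-1}\, T_n^{-1}\, T_n \;=\; X_n^{-1}.
\end{equation*}
Thus the identity to prove reduces to $X_1\, S\, X_n^{-1}\, S^\dagger = 1$, or equivalently
\begin{equation*}
S\, X_n^{-1}\, S^\dagger \;=\; X_1^{-1}.
\end{equation*}

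For the second step, I would rearrange relation (x) as $X_{i+1} = T_i X_i T_i$ (for $i = 1, \dots, n-1$), whose inverse reads $X_i^{-1} = T_i X_{i+1}^{-1} T_i$. Iterating this identity from $i = 1$ up to $i = n-1$ telescopes as
\begin{equation*}
X_1^{-1} \;=\; T_1 X_2^{-1} T_1 \;=\; T_1 T_2\, X_3^{-1}\, T_2 T_1 \;=\; \cdots \;=\; T_1 T_2\cdots T_{n-1}\, X_n^{-1}\, T_{n-1}\cdots T_2 T_1 \;=\; S\, X_n^{-1}\, S^\dagger,
\end{equation*}
which is precisely what we need. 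Combining the two steps gives $q^{1/2} T_0 T_0^\vee\, S\, T_n^\vee T_n\, S^\dagger = X_1\, X_1^{-1} = 1$.

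The argument is essentially bookkeeping; there is no substantive obstacle. The one thing worth noting is that the telescoping in the second step is pure substitution and does not require commuting any $T_j$ through any $X_i$, so relation (ix) never enters — only the single cross relation (x) and the definitions in (xi), (xii) are used.
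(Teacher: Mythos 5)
Your proof is correct and follows essentially the same route as the paper: both collapse $q^{1/2}T_0T_0^\vee$ to $X_1$ and $T_n^\vee T_n$ to $X_n^{-1}$, reducing the claim to $X_1 S X_n^{-1} S^\dagger = 1$, and then derive this from the cross relations (x). The paper simply states the reduced identity (in the equivalent form $S^\dagger X_1 = X_n S^{-1}$) as an easy consequence of (x), while you spell out the telescoping $X_i^{-1} = T_i X_{i+1}^{-1} T_i$ explicitly.
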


\begin{proof}
Using $T_0^\vee = q^{-1/2}T_0\inv X_1$ and $T_n^\vee = X_n\inv T_n\inv$, the needed relation is $X_1SX_n\inv S^\dagger = 1$, rather $S^\dagger X_1 = X_nS\inv$ which is an easy consequence of relations (x) of the DAHA.
\end{proof}

\subsection{Duality}\label{subsec: duality}

Apart from the obvious commutative subalgebra $\mathbb{C}[\bm{X}^{\pm1}]$ of Laurent polynomials in $X_1, \dots, X_n$, $\mathcal{H}_{q,\bm{\tau}}$ contains another commutative Laurent polynomial subalgebra $\mathbb{C}[\bm{Y}^{\pm1}]$ in $Y_1, \dots, Y_n$, where
\begin{equation}\label{yi}
Y_i \coloneqq T_i \cdots T_{n-1}T_nT_{n-1}\cdots T_1T_0T_1\inv \cdots T_{i-1}\inv.
\end{equation}
For $\lambda\in\mathbb{Z}^n$, let us write $\bm{X}^\lambda$ for $X_1^{\lambda_1}\dots X_n^{\lambda_n}$, and similarly for $\bm{Y}^\lambda$. The elements 
\begin{equation*}
\bm{X}^\lambda T_w\bm{Y}^\mu, \qquad \lambda,\mu\in\mathbb{Z}^n 
\end{equation*}
form a basis of $\mathcal{H}_{q,\bm{\tau}}$. In other words, every element $h\in \mathcal{H}_{q,\bm{\tau}}$ has a unique presentation as
\begin{equation*}
h=\sum_{\lambda,\mu\in\mathbb{Z}^n, w\in W}h_{\lambda, w, \mu} \bm{X}^\lambda T_w\bm{Y}^\mu, \qquad \text{with}\ h_{\lambda, w, \mu}\in\mathbb{C}.
\end{equation*}
This is referred to as the \textit{PBW property} of $\mathcal{H}_{q,\bm{\tau}}$, from which we have a vector space isomorphism
\begin{equation*}
\mathcal{H}_{q,\bm{\tau}} \cong \mathbb{C}[\bm{X}^{\pm1}] \otimes H_n \otimes \mathbb{C}[\bm{Y}^{\pm1}].
\end{equation*}
This indicates that $X_i$ and $Y_i$ should be viewed on the same footing (which is far from obvious from the definitions). This is manifested by the so-called \textit{duality}, an isomorphism between two DAHAs with different parameters as formulated in the following result.

\begin{theorem}[{\cite[Theorem 4.2]{sahi}}, Duality Isomorphism]\label{thrm: duality isomorphism}
Let $\mathcal{H}_{q,\bm{\tau}}$ be the DAHA with parameters $\bm{\tau} = (k_0,k_n,t,u_0,u_n)$, and $\mathcal{H}_{q\inv,\bm{\sigma}}$ the DAHA with parameters $\bm{\sigma} = (u_n\inv,k_n\inv,t\inv,u_0\inv,k_0\inv)$. There is a unique algebra isomorphism $\varepsilon : \mathcal{H}_{q,\bm{\tau}} \to \mathcal{H}_{q\inv,\bm{\sigma}}$ given on generators by
\begin{align*}
\varepsilon(T_0) &= T_1 \cdots T_{n-1}(T_n^\vee)\inv T_{n-1}\inv \cdots T_{1}\inv,\\
\varepsilon(T_i) &= T_i\inv,\\
\varepsilon(X_i) &= Y_i.
\end{align*}
It is equal to its inverse, with swapped $\bm{\tau} \leftrightarrow \bm{\sigma}$ and $q \leftrightarrow q^{-1}$. In particular, $\varepsilon(Y_i) = X_i$.
\end{theorem}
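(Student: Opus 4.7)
The plan is to define $\varepsilon$ on generators by the given formulas, verify that each relation (i)--(xii) of $\mathcal{H}_{q,\bm{\tau}}$ is preserved when evaluated in $\mathcal{H}_{q\inv,\bm{\sigma}}$, and then establish bijectivity by constructing an inverse via the analogous ``dual'' homomorphism $\varepsilon' : \mathcal{H}_{q\inv,\bm{\sigma}} \to \mathcal{H}_{q,\bm{\tau}}$ (defined by the same formulas with $\bm{\tau}\leftrightarrow\bm{\sigma}$ and $q\leftrightarrow q\inv$) and checking $\varepsilon' \circ \varepsilon = \id$ on the generators. Uniqueness is automatic from the prescription on a generating set.

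For the relation checks, the ``$T$-only'' relations (ii)--(iv), (vi), (vii) follow at once from $\varepsilon(T_i) = T_i\inv$ together with the fact that inversion preserves braid relations and flips signs in the quadratic ones (using e.g.\ that in $\mathcal{H}_{q\inv,\bm{\sigma}}$ one has $T_n - T_n\inv = k_n\inv - k_n$, hence $\varepsilon(T_n) - \varepsilon(T_n)\inv = k_n - k_n\inv$, matching relation (iii) of the source). For relation (i), the crucial observation is that $\varepsilon(T_0) = S(T_n^\vee)\inv S\inv$ is a conjugate of $(T_n^\vee)\inv$, so that
\begin{equation*}
\varepsilon(T_0) - \varepsilon(T_0)\inv \;=\; S\bigl[(T_n^\vee)\inv - T_n^\vee\bigr]S\inv \;=\; k_0 - k_0\inv,
\end{equation*}
since in $\mathcal{H}_{q\inv,\bm{\sigma}}$ the $u_n$-parameter equals $k_0\inv$. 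Relations (viii)--(x) concern $\varepsilon(X_i) = Y_i$ and reduce to the commutativity of the $Y_i$, their commutation with $T_j$ for $j \neq i, i{-}1$, and the cross-relation $T_i\inv Y_i = Y_{i+1} T_i$: these are the standard structural properties of Cherednik's $Y$-operators, best established via a faithful polynomial representation of the DAHA in which the $Y_i$ act as commuting Demazure--Lusztig type operators. Relations (xi), (xii) reduce, after substituting the definitions of $T_0^\vee$, $T_n^\vee$ and the formula for $\varepsilon(T_0)$, to conjugates of the source relations; and the mixed braid relation (v) is the most delicate, to be verified by reducing $\varepsilon(T_0)\varepsilon(T_1)\varepsilon(T_0)\varepsilon(T_1) = \varepsilon(T_1)\varepsilon(T_0)\varepsilon(T_1)\varepsilon(T_0)$ to a Hecke-algebra identity using Lemma \ref{lem: product of DAHA elements is the identity} to rewrite products involving $T_0 T_0^\vee$ and then applying the finite braid relations.

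For bijectivity I would verify $\varepsilon' \circ \varepsilon = \id$ on generators: for $T_i$ with $1 \leq i \leq n-1$ this is immediate from $\varepsilon(T_i) = T_i\inv$ and $\varepsilon'(T_i\inv) = T_i$, while for $X_i$ the identity $\varepsilon'(Y_i) = X_i$ follows by applying $\varepsilon'$ to the defining formula \eqref{yi} for $Y_i$ in $\mathcal{H}_{q\inv,\bm{\sigma}}$, explicitly unpacking $\varepsilon'(T_0) = S(T_n^\vee)\inv S\inv$ there, and simplifying using the already-proven relation checks. Cases $T_0$, $T_n$ are handled similarly. The claim $\varepsilon(Y_i) = X_i$ is then an immediate corollary. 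The main obstacle throughout is the verification of relation (v) together with the $Y$-side structural identities needed for (viii)--(x); the cleanest route is via the polynomial representation, which is faithful for generic parameters and realises both the $X$- and $Y$-actions in a manifestly symmetric form on Laurent polynomials, making the duality transparent at the operator level and reducing all outstanding identities to explicit if somewhat tedious computations.
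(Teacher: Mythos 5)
The paper does not prove this theorem; it is cited from Sahi \cite[Theorem 4.2]{sahi} without argument, so there is no in-paper proof to compare against. Your plan---define $\varepsilon$ on generators, verify that the images satisfy the defining relations of $\mathcal{H}_{q,\bm{\tau}}$ inside $\mathcal{H}_{q\inv,\bm{\sigma}}$, then establish bijectivity by constructing the dual homomorphism $\varepsilon'$ and checking $\varepsilon'\circ\varepsilon=\id$ on generators---is the standard route and surely close to Sahi's. Your parameter bookkeeping for relations (i), (iii) is correct, and the observation that $\varepsilon(Y_i)=X_i$ drops out of $\varepsilon\circ\varepsilon'=\id$ applied to $\varepsilon'(X_i)=Y_i$ is exactly right (you should note you need $\varepsilon\circ\varepsilon'=\id$, not just $\varepsilon'\circ\varepsilon=\id$, but these follow from each other by the evident $\bm{\tau}\leftrightarrow\bm{\sigma}$ symmetry).

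The proposal is thin precisely where the theorem is hard. Two concerns. First, the appeal to Lemma~\ref{lem: product of DAHA elements is the identity} to verify relation~(v) does not look like the right tool: that lemma records the single global identity $q^{1/2}T_0T_0^\vee ST_n^\vee T_nS^\dagger = 1$, whereas what one must show is the braid identity $\varepsilon(T_0)\,T_1\inv\,\varepsilon(T_0)\,T_1\inv = T_1\inv\,\varepsilon(T_0)\,T_1\inv\,\varepsilon(T_0)$ with $\varepsilon(T_0)=S(T_n^\vee)\inv S\inv$. That is a local braid statement which should come from the finite braid relations, relation (vii), and the cross-relations (x) applied to $T_n^\vee=X_n\inv T_n\inv$---not from the global product identity, and it is not clear how ``rewriting products involving $T_0T_0^\vee$'' even enters, since $T_0^\vee$ does not appear in (v). Second, the claim that the Basic Representation (Proposition~\ref{prop: Basic Representation}) makes the $X$--$Y$ duality ``manifestly symmetric'' is too optimistic: there the $X_i$ act multiplicatively while the $Y_i$ act as compositions of Demazure--Lusztig-type difference-reflection operators, so $[Y_i,Y_j]=0$, the $T$-commutation, and the cross-relations for the $Y$'s are genuine intertwiner computations, not transparent symmetries. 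Those verifications are the real content of the theorem, and the proposal defers them entirely. The skeleton of the argument is sound; the load-bearing steps remain unproved.
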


\subsection{The Basic Representation}\label{subsec: Basic Representation}

A crucial ingredient for what follows is the Basic Representation of $\mathcal{H}_{q,\bm{\tau}}$ \cite[Theorem 3.1]{sahi}, which extends the representation of the affine Hecke algebra of type $\widetilde C_n$ found in \cite{noumi}.

Consider the ring of $q$-difference operators in $n$ variables $X_1,\dots, X_n$. It is generated by $X_i^{\pm1}, P_i^{\pm1}$ with $i=1, \dots, n$, subject to the following relations for all indices $i$ and $j$:
\begin{equation}\label{eq: q-commutation relation}
[X_i,X_j] = [P_i,P_j]=0, \qquad P_iX_j = q^{\delta_{ij}}X_jP_i.
\end{equation}
We denote this ring $\mathbb{C}_q[\bm{X}^{\pm1},\bm{P}^{\pm1}]$; when $q=1$, it is the ring $\mathbb{C}[\bm{X}^{\pm1},\bm{P}^{\pm1}]$ of Laurent polynomials in $2n$ variables. Localising on the (Ore) set $\mathbb{C}(\bm{X})\setminus\{0\}$ of non-zero rational functions, we obtain the ring $\mathbb{C}_q(\bm{X})[\bm{P}^{\pm1}]$ of $q$-difference operators with rational coefficients, denoted $\mathscr{D}_q$ for brevity:
\begin{equation}\label{eq: Dq}
\mathscr{D}_q \coloneqq \mathbb{C}_q(\bm{X})[\bm{P}^{\pm1}].
\end{equation}
The Weyl group of type $C_n$ consisting of \textit{sign-changing} permutations on $n$ letters,
\begin{equation}\label{eq: W}
W = S_n \ltimes \mathbb{Z}_2^n,
\end{equation}
is generated by transpositions $\s_{ij}$ and sign-reversals $\s_i$. It acts on on $\mathbb{C}_q[\bm{X}^{\pm1},\bm{P}^{\pm1}]$ and $\mathscr{D}_q$ by 
\begin{equation}\label{eq: sign reversal elements}
\begin{alignedat}{2}
\s_i &: X_i \mapsto X_i\inv, \quad&& P_i \mapsto P_i\inv,\\
\s_{ij} &: X_i \mapsto X_j, \quad&& P_i \mapsto P_j,\\
\s_{ij}^+ = \s_{ij}\s_i\s_j &: X_i \mapsto X_j\inv, \quad&& P_i \mapsto P_j\inv.
\end{alignedat}
\end{equation}

We can, therefore, make the following definition.

\begin{definition}\label{def: difference-reflection operators}
The algebra of \defn{$q$-difference-reflection operators} is the semi-direct product 
\begin{equation*}
\mathscr{D}_q \rtimes \mathbb{C}W.
\end{equation*}
\end{definition}

Elements of $\mathscr{D}_q \rtimes \mathbb{C}W$ are finite linear combinations 
\begin{equation*}
\sum_{\lambda\in\mathbb{Z}^n, w\in W}a_{\lambda, w}\bm{P}^\lambda w, \qquad\text{with}\ a_{\lambda, w}\in\mathbb{C}(\bm{X}).
\end{equation*}
Now, introduce the notation
\begin{equation}\label{eq: si in terms of sij}
s_0 \coloneqq P_1^{-1}\s_1, \qquad
s_n \coloneqq \s_n, \qquad
s_i \coloneqq \s_{i\,i+1} \quad (i=1,\dots,n-1).
\end{equation}
They satisfy the relations of the affine Weyl group $\widetilde{W}$ of type $\widetilde{C}_n$, cf. Definition \ref{def: DAHA of rank n}(v)-(vii).

\begin{proposition}[{\cite[(13)]{sahi}}, Basic Representation]\label{prop: Basic Representation}
There is an injective homomorphism of algebras $\beta : \mathcal{H}_{q,\bm{\tau}} \to \mathscr{D}_q \rtimes \mathbb{C}W$ given on generators by $X_i \mapsto X_i$ and $T_i \mapsto \tau_i + c_i(\bm{X})(s_i - 1)$, for
\begin{align}
c_0(\bm{X}) &= k_0\inv\frac{(1 - q^{1/2}k_0u_0X_1\inv)(1 + q^{1/2}k_0u_0\inv X_1\inv)}{1 - qX_1^{-2}},\label{eq: c0(X)}\\
c_i(\bm{X}) &= \frac{t\inv - tX_iX_{i+1}\inv}{1 - X_iX_{i+1}\inv},\label{eq: ci(X)} &\quad(i=1,\dots,n-1)\\
c_n(\bm{X}) &= k_n\inv\frac{(1 - k_nu_nX_n)(1 + k_nu_n\inv X_n)}{1 - X_n^2},\label{eq: cn(X)}
\end{align}
where we use $\overline{\textover[c]{$\cdot$}{w}}$ to denote the difference between a parameter and its inverse, e.g. $\overline{t} = t - t\inv$.
\end{proposition}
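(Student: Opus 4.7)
The plan is to establish $\beta$ as a well-defined homomorphism by verifying each of the relations (i)-(xii) of Definition \ref{def: DAHA of rank n} in $\mathscr{D}_q\rtimes\mathbb{C}W$, and then to prove injectivity via a PBW/leading-term argument. The prototype computation is for the quadratic relations (i)-(iii): for a Demazure-Lusztig-type operator $\beta(T_i) = \tau_i + c_i(\bm{X})(s_i - 1)$, using $s_i^2 = 1$ together with the intertwining $s_i\, c_i(\bm{X}) = c_i(s_i\bm{X})\, s_i$ shows that $\beta(T_i)^2 - (\tau_i - \tau_i^{-1})\beta(T_i) - 1 = 0$ is equivalent to the functional identity $c_i(\bm{X}) + c_i(s_i\bm{X}) = \tau_i + \tau_i^{-1}$. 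This identity is verified case by case: for $i = 1,\dots,n-1$ set $u = X_iX_{i+1}^{-1}$ (so $s_i\colon u\mapsto u^{-1}$) and the sum collapses to $t + t^{-1}$; for $i = 0$ set $Z = q^{1/2}X_1^{-1}$ so that $s_0\colon Z\mapsto Z^{-1}$ (from $P_1^{-1}X_1P_1 = q^{-1}X_1$), whereupon the two numerator factors in \eqref{eq: c0(X)} combine to give $k_0 + k_0^{-1}$; the case $i = n$ is analogous.

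The commuting relations (iv), (viii), (ix) are immediate from the commuting of disjoint $s_i$'s with $X_j$'s they fix. The intertwiner relation (x) follows from $s_iX_i = X_{i+1}s_i$ together with the quadratic relation just established. The dual relations (xi) and (xii), which are genuinely new in the non-reduced $C^\vee C_n$ setting, reduce after direct expansion of $T_0^\vee = q^{-1/2}T_0^{-1}X_1$ and $T_n^\vee = X_n^{-1}T_n^{-1}$ to analogous $c + s(c)$-type identities for modified coefficients; the specific numerator factors $(1 \mp q^{1/2}k_0u_0^{\mp 1}X_1^{-1})$ in \eqref{eq: c0(X)} and $(1 \mp k_nu_n^{\mp 1}X_n)$ in \eqref{eq: cn(X)} are designed exactly for this purpose. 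The hardest step is the 4-braid relations (v) and (vii), $T_0T_1T_0T_1 = T_1T_0T_1T_0$ and its mirror, because the shift $P_1^{-1}$ inside $s_0 = P_1^{-1}\sigma_1$ interacts nontrivially with $c_0(\bm{X})$; one must expand both sides into the eight elements of $\langle s_0, s_1\rangle$ and check that the rational coefficients match. This is either a long but direct calculation in $\mathbb{C}(\bm{X})$ or, more cleanly, can be reduced to the corresponding computation for Noumi's affine Hecke algebra representation \cite{noumi}, to which $\beta$ restricts.

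For injectivity, I would expand an arbitrary $h \in \mathcal{H}_{q,\bm{\tau}}$ in the PBW basis as $h = \sum h_{\lambda,w,\mu}\bm{X}^\lambda T_w\bm{Y}^\mu$. Using \eqref{yi}, a direct computation shows that $\beta(Y_i) = \alpha_i P_i^{-1} + (\text{lower order in }\bm{P})$ for a nonzero constant $\alpha_i$: among the factors making up $Y_i$, exactly $T_0 \mapsto k_0 + c_0(s_0 - 1)$ contributes the translation $P_1^{-1}$ from $s_0 = P_1^{-1}\sigma_1$, which is then conjugated through the surrounding $T_j$'s into $P_i^{-1}$. Consequently, $\beta(\bm{X}^\lambda T_w\bm{Y}^\mu)$ has leading term of the shape $\bm{X}^\lambda\cdot w\cdot \bm{P}^{-\mu}$ (times a nonzero scalar) in the natural $\bm{P}$-filtration on $\mathscr{D}_q\rtimes\mathbb{C}W$; distinct triples $(\lambda, w, \mu)$ yield distinct leading terms, so linear independence, and hence injectivity of $\beta$, follows at once.
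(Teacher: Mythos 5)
The paper does not prove Proposition~\ref{prop: Basic Representation}: it is quoted verbatim from \cite[(13)]{sahi}, and Remark~\ref{rem: Basic Representation at the classical level} explains that the extension to arbitrary $q$ (including $q=1$) is taken from \cite{stokman} and \cite{vandiejenemsizzurrian}. So what you wrote is not comparable to a proof in the paper; rather it is a self-contained sketch of the argument one finds in those references, following the usual Demazure--Lusztig verification.

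Your reduction of the quadratic relations to the functional identity $c_i(\bm{X}) + c_i(s_i\bm{X}) = \tau_i + \tau_i^{-1}$ is correct, and the substitutions $u = X_iX_{i+1}^{-1}$ and $Z = q^{1/2}X_1^{-1}$ (so that $s_0\colon Z\mapsto Z^{-1}$) do make those identities collapse to $t+t^{-1}$ and $k_0+k_0^{-1}$; I checked both. Relations (xi) and (xii) do indeed reduce to analogous but not identical checks after expanding $T_0^\vee$ and $T_n^\vee$; in the $Z$-variable, the key computations for (xi) are $A + s_0(A) = u_0 - u_0^{-1}$ and $Bs_0(B) - As_0(A) = 1$ with $A = (k_0^{-1}-c_0)Z^{-1}$, $B = c_0 Z$, and both hold using $c_0 + s_0(c_0) = k_0 + k_0^{-1}$. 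Two things are stated too glibly, though neither is fatal. First, relation (x) is \emph{not} a consequence of the quadratic relation plus $s_iX_i = X_{i+1}s_i$ alone: matching coefficients shows it is equivalent to the additional functional identity $(\tau_i - c_i(\bm X))X_i = (\tau_i^{-1} - c_i(\bm X))X_{i+1}$, which holds for the specific $c_i$ in \eqref{eq: ci(X)} but must be verified separately. Second, and more seriously, your leading-term argument for injectivity claims that $\beta(\bm{X}^\lambda T_w\bm{Y}^\mu)$ has leading term $\bm{X}^\lambda\cdot w\cdot\bm{P}^{-\mu}$ (up to scalar). This is not accurate: $\beta(T_w)$ is not a monomial in $\mathbb{C}W$ but a $\mathbb{C}(\bm X)$-linear combination of group elements $v\le w$, so the leading piece is not a single monomial. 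The fix is standard: first show that $\{\beta(T_w)\}_{w\in W}$ is a $\mathbb{C}(\bm X)$-basis of $\mathbb{C}(\bm X)\otimes\mathbb{C}W$ (e.g.\ by triangularity with nonzero $c$-function coefficients), and then run the $\bm{P}$-filtration argument on $\beta(\bm Y^\mu)$ modulo that basis. With those repairs your sketch agrees with the proofs in the cited sources.
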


\begin{remark}\label{rem: Basic Representation at the classical level}
Our $q$ corresponds to $q^2$ in \cite{sahi}, which furthermore is assumed not to be a root of unity. The fact that the same formulae define an injective homomorphism for all $q$, including the classical level $q = 1$, is stated in \cite{oblomkovJun04}; for a proof, see \cite[Theorem 3.1]{stokman} and \cite[Theorem 3.2]{vandiejenemsizzurrian}.
\end{remark}

\begin{remark}
For $q \neq 1$, elements of $\mathscr{D}_q \rtimes \mathbb{C}W$ (and hence elements of $\mathcal{H}_{q,\bm{\tau}}$) can be viewed as operators acting on functions of $n$ variables, with $P_i$'s acting as multiplicative shifts
\begin{equation}\label{eq: Pi-action on X variables}
P_i \act f(X_1,X_2,\dots,X_n) = f(X_1,\dots,X_{i-1},qX_i, X_{i+1},\dots,X_n)
\end{equation}
and with $s_i$ acting in the following way:
\begin{equation}\label{eq: Sn-action on X variables}
\begin{aligned}
s_0 \act f(X_1, X_2 \dots, X_n) &= f(qX_1\inv, X_2, \dots, X_n),\\
s_i \act f(X_1, X_2 \dots, X_n) &= f(X_1, \dots, X_{i-1}, X_{i+1}, X_i, X_{i+2}, \dots, X_n),\\
s_n \act f(X_1, X_2 \dots, X_n) &= f(X_1, \dots, X_{n-1}, X_n\inv).
\end{aligned}
\end{equation}
\end{remark}

One important corollary of Proposition \ref{prop: Basic Representation} is the following localisation property of $\mathcal{H}_{q,\bm{\tau}}$. Write $\mathcal{S}$ for the multiplicative subset generated by the following elements, for $i \neq j$ and $r \in \mathbb{Z}$:
\begin{equation}\label{eq: delta factors}
\begin{gathered}
1 - q^rX_i^{\pm1}X_{j}^{\pm1}, \qquad 1 - q^rt^2X_i^{\pm1}X_{j}^{\pm1}, \qquad 1 - q^{r}X_i^{\pm2},\\[2mm]
1 - q^{r/2}k_0u_0X_i^{\pm1}, \qquad 1 + q^{r/2}k_0u_0\inv X_i^{\pm1} \qquad 1 - q^{r/2}k_nu_nX_i^{\pm1}, \qquad 1 + q^{r/2}k_nu_n\inv X_i^{\pm1}.
\end{gathered}
\end{equation}
It is easy to verify this is both a right and left Ore localising set in $\mathcal{H}_{q,\bm{\tau}}$; view the DAHA as its image under the Basic Representation. The corresponding Ore localisation is denoted $\mathcal{H}_{q,\bm{\tau}}[\mathcal{S}\inv]$.

\begin{proposition}\label{rem: localisation of the DAHA by S}
The Basic Representation $\beta$ induces an isomorphism of the Ore localisations
\begin{equation*}
\mathcal{H}_{q,\bm{\tau}}[\mathcal{S}\inv] \cong \mathbb{C}_q[\bm{X}^{\pm1}, \bm{P}^{\pm1}][\mathcal{S}\inv]\rtimes \mathbb{C}W.
\end{equation*}
\end{proposition}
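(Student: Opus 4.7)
The plan is to exploit injectivity of $\beta$ from Proposition~\ref{prop: Basic Representation} and establish the isomorphism by checking surjectivity after Ore localization, via explicit reconstruction of each generator of the right-hand side from the image of $\tilde\beta$.

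First, I would verify that $\mathcal{S}$ is indeed a two-sided Ore set in the smash product $\mathbb{C}_q[\bm{X}^{\pm1},\bm{P}^{\pm1}]\rtimes\mathbb{C}W$: the factors in \eqref{eq: delta factors} lie in $\mathbb{C}[\bm{X}^{\pm1}]$, and conjugation by $\bm{P}$ (shifting $X_i \mapsto qX_i$ by \eqref{eq: q-commutation relation}) together with the Weyl group action of \eqref{eq: sign reversal elements} permutes these factors up to unit multiples and shifts of the integer exponent $r$. Pulling back through the injective $\beta$ yields the Ore property on the DAHA side. Since $\beta$ sends elements of $\mathcal{S}$ to units of the target localization, by the universal property of Ore localization it extends uniquely to an injective algebra homomorphism
\begin{equation*}
\tilde\beta : \mathcal{H}_{q,\bm{\tau}}[\mathcal{S}^{-1}] \longrightarrow \mathbb{C}_q[\bm{X}^{\pm1},\bm{P}^{\pm1}][\mathcal{S}^{-1}]\rtimes\mathbb{C}W.
\end{equation*}

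To prove surjectivity, it suffices to exhibit preimages for the generators $X_i$, $P_i^{\pm1}$, and the Weyl group $W$. The $X_i$ are in the image trivially. For the Weyl group, I would rewrite the defining formula of the Basic Representation as
\begin{equation*}
s_i \;=\; 1 + c_i(\bm{X})^{-1}\bigl(\beta(T_i)-\tau_i\bigr),\qquad i=0,1,\ldots,n,
\end{equation*}
which lies in $\image(\tilde\beta)$ provided that $c_i(\bm{X})$ is a unit in the localization. Inspection of \eqref{eq: c0(X)}--\eqref{eq: cn(X)} shows that every numerator and denominator factor of every $c_i(\bm{X})$ appears in the list \eqref{eq: delta factors}: namely $1-X_iX_{i+1}^{-1}$ and $1-t^2X_iX_{i+1}^{-1}$ for the middle $c_i$, and $1-qX_1^{-2}$, $1\mp q^{1/2}k_0u_0^{\pm1}X_1^{-1}$ for $c_0$ (analogously for $c_n$). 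Thus each $c_i(\bm{X})$ is a unit, so $s_1,\ldots,s_{n-1}$, $s_n=\s_n$, and $s_0=P_1^{-1}\s_1$ all lie in $\image(\tilde\beta)$.

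From $s_1,\ldots,s_{n-1},s_n$ one generates every element of $W\cong S_n\ltimes\mathbb{Z}_2^n$, and in particular the sign reversal $\s_1$; hence $P_1^{-1}=s_0\s_1$ is in the image, and conjugating by the transpositions $\s_{1,i}\in W$ produces every remaining $P_i^{\pm1}$. Combined with the $X_i$ and $\mathbb{C}W$, this exhausts the generators of the codomain and gives surjectivity. The main obstacle is essentially bookkeeping: carefully matching each factor appearing in $c_0,\ldots,c_n$ with a factor listed in \eqref{eq: delta factors} so that $c_i(\bm{X})$ is unambiguously a unit, and separately verifying that $\mathcal{S}$ is closed (as a divisor set) under $q$-shifts and the $W$-action in order to secure the Ore property. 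Both steps are direct but require a full enumeration of cases.
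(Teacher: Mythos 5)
The paper states this proposition without proof, only remarking that the Ore property is easy to verify by viewing the DAHA as its image under $\beta$, so your argument fills in a gap left to the reader. Your proof is correct: the factors in \eqref{eq: delta factors} are precisely what is needed so that each $c_i(\bm{X})$ from \eqref{eq: c0(X)}--\eqref{eq: cn(X)} becomes a unit after localization (using that $c_i = t^{-1}\bigl(1-t^2X_iX_{i+1}^{-1}\bigr)/\bigl(1-X_iX_{i+1}^{-1}\bigr)$ for the middle generators), and the reconstruction of $W$ from $s_1,\dots,s_n$ and then $P_1^{-1}=s_0\s_1$ is exactly the right way to obtain surjectivity. Two steps deserve slightly more care than the single sentences you allot them: the Ore property does not formally ``pull back'' from $\mathbb{C}_q[\bm{X}^{\pm1},\bm{P}^{\pm1}]\rtimes\mathbb{C}W$ to a subalgebra --- one should instead argue directly in $\beta(\mathcal{H})$ (conjugating a generator of $\mathcal{S}$ past each $\bm{P}^\mu w$ appearing in $h$ and taking the product of the conjugates as $s'$) --- and the injectivity of $\tilde\beta$ is not a formal consequence of the universal property but follows from the standard fact that if $A\hookrightarrow B$, $S\subseteq A$ is Ore in both and $S$ is regular in $B$, then $A[S^{-1}]\hookrightarrow B[S^{-1}]$. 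Both are routine and consistent with the paper's ``easy to verify'' remark.
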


When $q = 1$, we can replace the Ore set $\mathcal{S}$ by the multiplicative set generated by
\begin{equation}\label{eq: delta}
\delta(\bm{X}) \coloneqq \prod_{i=1}^n(1 - X_i^2)(1 - X_i^{-2})\prod_{j\neq k}(1 - X_jX_k)(1 - X_j\inv X_k)(1 - X_jX_k\inv)(1 - X_j\inv X_k\inv)
\end{equation}
and
\begin{equation}\label{eq: delta tau}
\begin{multlined}
\delta_{\bm{\tau}}(\bm{X}) \coloneqq \prod_{i<j}(1 - t^2X_iX_j)(1 - t^2X_i\inv X_j)(1 - t^2X_iX_j\inv)(1 - t^2X_i\inv X_j\inv)\\
\times \prod_{i=1}^n(1 - k_0u_0X_i)(1 + k_0u_0\inv X_i)(1 - k_nu_nX_i)(1 + k_nu_n\inv X_i)\\
\qquad\qquad\times\prod_{i=1}^n(1 - k_0u_0X_i\inv)(1 + k_0u_0\inv X_i\inv)(1 - k_nu_nX_i\inv)(1 + k_nu_n\inv X_i\inv).
\end{multlined}
\end{equation}

In the sequel, we use subscript $\delta(\bm{X})\delta_{\bm{\tau}}(\bm{X})$ for localisation on the product of \eqref{eq: delta} and \eqref{eq: delta tau}.

\begin{corollary}\label{cor: localisation of the DAHA by delta}
Let $\mathcal{H} = \mathcal{H}_{1\bm{\tau}}$. The Basic Representation $\beta$ induces isomorphisms of localisations
\begin{equation*}
\mathcal{H}_{\delta(\bm{X})\delta_{\bm{\tau}}(\bm{X})} \cong \mathbb{C}[\bm{X}^{\pm1}, \bm{P}^{\pm1}]_{\delta(\bm{X})\delta_{\bm{\tau}}(\bm{X})} \rtimes \mathbb{C}W, \qquad e_\tau\mathcal{H}_{\delta(\bm{X})\delta_{\bm{\tau}}(\bm{X})}e_\tau \cong \mathbb{C}[\bm{X}^{\pm1}, \bm{P}^{\pm1}]_{\delta(\bm{X})\delta_{\bm{\tau}}(\bm{X})}^We_\tau.
\end{equation*}
\end{corollary}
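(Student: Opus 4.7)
The plan is to deduce the corollary from Proposition \ref{rem: localisation of the DAHA by S} by specialising to $q = 1$ and then projecting with the Hecke idempotent. The first isomorphism should be essentially immediate: at $q = 1$, all the powers of $q$ in the generators \eqref{eq: delta factors} of $\mathcal{S}$ collapse to $1$, leaving only finitely many distinct generators, which (up to sign changes and obvious rearrangement) are precisely the linear factors appearing in $\delta(\bm{X})$ and $\delta_{\bm{\tau}}(\bm{X})$. Hence the two Ore localisations $\mathcal{H}[\mathcal{S}^{-1}]$ and $\mathcal{H}_{\delta(\bm{X})\delta_{\bm{\tau}}(\bm{X})}$ coincide, and Proposition \ref{rem: localisation of the DAHA by S} at $q = 1$ (valid thanks to Remark \ref{rem: Basic Representation at the classical level}) yields the first stated isomorphism.

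For the second isomorphism, I would apply $\beta(e_\tau)(\cdot)\beta(e_\tau)$ to both sides of the first. Setting $R := \mathbb{C}[\bm{X}^{\pm 1}, \bm{P}^{\pm 1}]_{\delta(\bm{X})\delta_{\bm{\tau}}(\bm{X})}$, the task reduces to proving that $\beta(e_\tau)(R \rtimes \mathbb{C}W)\beta(e_\tau) = R^W \cdot \beta(e_\tau)$. The inclusion $\supseteq$ is easy: any $W$-invariant $g \in R^W$ commutes with all of $\mathbb{C}W$, and therefore with $\beta(e_\tau)$, whence $g\beta(e_\tau) = \beta(e_\tau) g \beta(e_\tau)$ by idempotency of $\beta(e_\tau)$.

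The main step, where I expect the real work to sit, is $\subseteq$. The crux is to establish that in the localised algebra $s_i \beta(e_\tau) = \beta(e_\tau)$ for every simple reflection $s_i$ of $W$. Applying $\beta$ to the Hecke identity $T_i e_\tau = \tau_i e_\tau$ and using $\beta(T_i) = \tau_i + c_i(\bm{X})(s_i - 1)$ gives $c_i(\bm{X})(s_i - 1)\beta(e_\tau) = 0$. Inspection of the formulae \eqref{eq: c0(X)}--\eqref{eq: cn(X)} at $q = 1$ shows that every numerator factor of $c_i(\bm{X})$ appears in $\delta_{\bm{\tau}}(\bm{X})$ and every denominator factor appears in $\delta(\bm{X})$; thus $c_i(\bm{X})$ is a unit in $R$ and may be cancelled on the left, giving $s_i\beta(e_\tau) = \beta(e_\tau)$ and hence $w\beta(e_\tau) = \beta(e_\tau)$ for every $w \in W$. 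With this in hand, for any $A \in R$ and $w \in W$ one has $\beta(e_\tau) A w \beta(e_\tau) = \beta(e_\tau) A \beta(e_\tau)$; expanding $\beta(e_\tau) = \sum_v f_v(\bm{X}) v$ yields
\begin{equation*}
\beta(e_\tau) A \beta(e_\tau) = \Bigl(\sum_{v \in W} f_v(\bm{X})\,(v\cdot A)\Bigr)\beta(e_\tau),
\end{equation*}
and the coefficient $\sum_v f_v(\bm{X})(v\cdot A) \in R$ is $W$-invariant: the relation $s_i\beta(e_\tau) = \beta(e_\tau)$ translates coefficient-wise to $s_i \cdot f_{s_i v} = f_v$ for every $v \in W$, from which $s_i$-invariance of the coefficient follows after reindexing $v \mapsto s_i v$. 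This completes the plan; the only delicate point in execution is verifying the divisibility claim relating the factors of $c_i(\bm{X})$ to those of $\delta(\bm{X})\delta_{\bm{\tau}}(\bm{X})$, which is where the precise shape of the localising set has been chosen so as to make the argument work.
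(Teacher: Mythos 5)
Your proof is correct, and it fills in the details the paper leaves implicit: the paper merely observes that at $q=1$ the Ore set $\mathcal S$ can be replaced by the multiplicative set generated by $\delta(\bm X)\delta_{\bm\tau}(\bm X)$ and then states the corollary without proof. Your matching of factors for the first isomorphism is exactly the paper's observation, and your derivation of $s_i\beta(e_\tau)=\beta(e_\tau)$ from $T_ie_\tau=\tau_ie_\tau$ by cancelling the unit $c_i(\bm X)$ is the same invertibility trick the paper uses elsewhere (e.g.\ in the proof of Lemma \ref{lem: Ai preserve M'}). The coefficient-wise bookkeeping in the final step ($s_i\cdot f_{s_iv}=f_v$, hence $\sum_v f_v\,(v\cdot A)$ is $W$-invariant) is a clean and complete way to close the argument; it is worth noting, as you implicitly use, that $R$ is $W$-stable because $\delta$ and $\delta_{\bm\tau}$ are $W$-invariant. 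No gaps.
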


\section{Character Varieties}\label{sec: character varieties}

We begin by recalling the character varieties of punctured Riemann surfaces and some of their fundamental properties that we will need.

\subsection{General Properties}\label{subsec: general properties}

Given $g, k\ge 0$ and conjugacy classes $C_1,\dots, C_k \subseteq \GL_n(\mathbb{C})$, consider
\begin{equation*}
\mathfrak{R}_{g,k} = \{X_1, Y_1, \ldots, X_g, Y_g \in \GL_n(\mathbb{C}), A_i \in C_i : X_1Y_1X_1\inv Y_1\inv\dots X_gY_gX_g\inv Y_g\inv A_1\cdots A_k = \Id_n\}.
\end{equation*}
There is a natural action of $\GL_n(\mathbb{C})$ on ${\mathfrak{R}}_{g,k}$ by conjugation, so we define the
\defn{$\GL_n$-character variety} $\mathfrak{M}_{g,k}$ as the GIT-quotient 
\begin{equation}\label{def: character variety}
\mathfrak{M}_{g,k} \coloneqq {\mathfrak{R}}_{g,k}\GIT\GL_n(\mathbb{C}).
\end{equation}
The affine variety $\mathfrak{M}_{g,k}$ can be viewed as the moduli space of (semi-simple) representations of the fundamental group of a genus $g$ Riemann surface $\Sigma_{g,k}$ with $k$ punctures, where we fix the conjugacy classes representing the loops around the punctures.

Let us assume that the conjugacy classes $C_i$ are semi-simple so they are represented by diagonal matrices (that encode the eigenvalues, possibly repeated, of $A_i$). Let us write $\lambda_{ij}$ with $i=1,\dots, k$ and $j=1,\dots, m_{i}$ for the distinct eigenvalues of $A_i$, and $\mu_{ij}$ for the multiplicity of the eigenvalue $\lambda_{ij}$. We have $\mu_{i1} + \dots + \mu_{i\,m_i}=n$ for all $i$.

\begin{definition}\label{def: generic eigendata}
Let $\lambda_{ij}$, $i=1,\dots, k$, $j=1,\dots, m_i$ denote the distinct eigenvalues of semi-simple $A_i\in\GL_n(\mathbb C)$, with multiplicities $\mu_{ij}$. The conjugacy classes represented by the $A_i$ are \defn{generic} if
\begin{equation*}
\prod_{i=1}^k \prod_{j=1}^{m_i}\lambda_{ij}^{\mu_{ij}} = 1
\end{equation*}
and, for any $1 \leq s < n$ and a collection of numbers $\nu_{ij} \leq \mu_{ij}$ with $\nu_{i1} + \cdots + \nu_{i\,m_i} = s$ for all $i$,
\begin{equation*}
\prod_{i=1}^k \prod_{j=1}^{m_i}\lambda_{ij}^{\nu_{ij}} \neq 1.
\end{equation*}
\end{definition}

The genericity condition guarantees that the $\GL_n(\mathbb C)/\mathbb C^*$-action on $\mathfrak R_{g,k}$ is free. We henceforth assume the $\mathfrak{M}_{g,k}$ always have semi-simple generic conjugacy classes in the sense of Definition \ref{def: generic eigendata}.

\begin{theorem}[{\cite[Theorem 2.1.5]{hauselletellierrodriguez-villegas11}}]\label{thrm: smooth character variety}
If non-empty, $\mathfrak{M}_{g,k}$ is a smooth equidimensional variety of dimension
\begin{equation}\label{eq: character variety dimension}
d = 2 + (2g + k - 2)n^2 - \sum_{i,j}\mu_{ij}^2.
\end{equation}
\end{theorem}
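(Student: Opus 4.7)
The plan is to realise $\mathfrak{R}_{g,k}$ as the fibre over $\Id_n$ of the product/commutator map
\begin{equation*}
\mu : \GL_n(\mathbb{C})^{2g} \times C_1 \times \cdots \times C_k \to \SL_n(\mathbb{C}), \qquad (X_i,Y_i,A_j) \mapsto \prod_{i=1}^g[X_i,Y_i]\prod_{j=1}^k A_j,
\end{equation*}
where the target lies in $\SL_n$ thanks to the first genericity identity $\prod_{i,j}\lambda_{ij}^{\mu_{ij}}=1$. Smoothness and the dimension count of $\mathfrak{M}_{g,k}$ will then follow from (a) irreducibility of every representation in $\mathfrak R_{g,k}$, which makes the $\PGL_n(\mathbb C)$-action free, plus (b) surjectivity of $d\mu$ at every point of $\mathfrak R_{g,k}$, which makes $\mathfrak R_{g,k}$ a smooth submanifold of the expected codimension.

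The first step is to verify irreducibility using the second part of Definition \ref{def: generic eigendata}. Suppose that a tuple $(X_i,Y_i,A_j)\in\mathfrak R_{g,k}$ preserved a proper subspace $V'\subset\mathbb C^n$ of dimension $s$, with $0<s<n$. Restricting each $A_j$ to $V'$, its eigenvalues form a sub-multiset of the eigenvalues of $A_j$ with multiplicities $\nu_{jl}\le\mu_{jl}$ summing to $s$. Since commutators in $\GL(V')$ have determinant $1$, the induced identity $\prod_{i}[X_i|_{V'},Y_i|_{V'}]\prod_j A_j|_{V'}=\Id_{V'}$ yields $\prod_{j,l}\lambda_{jl}^{\nu_{jl}}=1$, contradicting genericity. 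Hence all representations are irreducible; by Schur's lemma, their stabilisers under conjugation coincide with $\mathbb C^*\Id_n$, and $\PGL_n(\mathbb C)$ acts freely on $\mathfrak R_{g,k}$.

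Next I would show that $d\mu$ is surjective onto $T_{\Id_n}\SL_n(\mathbb{C})=\mathfrak{sl}_n(\mathbb{C})$. The standard move is to identify the Zariski tangent space at a point of $\mathfrak R_{g,k}$ with the space of $1$-cocycles of $\pi_1(\Sigma_{g,k})$ with values in the adjoint representation on $\mathfrak{gl}_n$, and the cokernel of $d\mu$ with the space of invariants of the adjoint representation on $\mathfrak{sl}_n$. Irreducibility of the representation, established in the previous step, forces these invariants to be trivial, so $\mu$ is a submersion on $\mathfrak R_{g,k}$ and the fibre is smooth of the expected dimension.

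Finally I assemble the dimension count: $\dim\GL_n^{2g}=2gn^2$, $\dim C_i = n^2-\sum_j\mu_{ij}^2$, we impose $\dim\SL_n=n^2-1$ equations via $\mu=\Id_n$, and we quotient by the free action of $\PGL_n(\mathbb C)$ of dimension $n^2-1$. The result is exactly $2+(2g+k-2)n^2-\sum_{i,j}\mu_{ij}^2$, matching \eqref{eq: character variety dimension}, and equidimensionality follows because every point of $\mathfrak R_{g,k}$ is smooth of the same dimension. The main obstacle is the deformation-theoretic step: controlling the cokernel of $d\mu$ requires careful use of Poincaré duality for the punctured surface $\Sigma_{g,k}$ together with the constraint that each $A_j$ is confined to the orbit $C_j$, so that only the traceless-on-each-factor deformations contribute; irreducibility from the first step is what ultimately kills the obstruction.
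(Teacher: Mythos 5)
The statement you are proving is quoted verbatim from Hausel--Letellier--Rodriguez-Villegas and is not reproved in this paper, so there is no ``paper's own proof'' to compare against; what you have written is essentially the standard argument that HLRV (and Goldman, Atiyah--Bott, etc.) use, and its skeleton is sound.

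Two comments on the substance. First, your irreducibility argument is correct and complete: if the tuple preserves a proper subspace $V'$ of dimension $s$, then each $A_j|_{V'}$ is again semisimple with eigenvalue multiplicities $\nu_{jl}\le\mu_{jl}$ summing to $s$, and taking determinants of the restricted relation forces $\prod_{j,l}\lambda_{jl}^{\nu_{jl}}=1$, contradicting Definition~\ref{def: generic eigendata}. This is exactly the genericity $\Rightarrow$ irreducibility implication, and Schur then gives freeness of the $\PGL_n(\mathbb C)$-action. Second, the step you yourself flag as the ``main obstacle'' --- surjectivity of $d\mu$ --- does not actually need the full machinery of Poincar\'e duality for the punctured surface if you argue directly. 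Take $g=0$ for simplicity (the general case is the same with extra terms). Write $B_j=A_1\cdots A_j$ and parametrise $T_{A_j}C_j=\{[\xi_j,A_j]:\xi_j\in\mathfrak{gl}_n\}$; then at a point of the fibre one computes
\[
d\mu(\xi_1,\dots,\xi_k)=\sum_{j=1}^k\left(B_{j-1}\xi_jB_{j-1}^{-1}-B_j\xi_jB_j^{-1}\right).
\]
If $\eta\in(\mathfrak{sl}_n)^*$ annihilates this image, and one represents $\eta$ by $\xi\in\mathfrak{gl}_n/\mathbb C\Id$ via the trace pairing, the vanishing $\tr\big(\xi\,(B_{j-1}\xi_jB_{j-1}^{-1}-B_j\xi_jB_j^{-1})\big)=0$ for all $\xi_j$ forces $B_{j-1}^{-1}\xi B_{j-1}=B_j^{-1}\xi B_j$ for every $j$, hence $\xi$ commutes with every $B_j$ and therefore with every $A_j$. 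Irreducibility makes $\xi$ scalar, so $\eta=0$ and $d\mu$ is a submersion onto $\mathfrak{sl}_n$. This is precisely the ``cokernel $=$ invariants, and invariants vanish'' identification you invoke, made explicit without appealing to cohomological duality. With that in place your dimension count is correct and gives \eqref{eq: character variety dimension}, and equidimensionality follows because every fibre point is a smooth point of the same dimension.

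One small caution for the record: you need $A_j|_{V'}$ to be semisimple to invoke the genericity condition as stated; this holds because the $A_j$ are assumed semisimple and restriction of a semisimple operator to an invariant subspace is semisimple, but it is worth saying so explicitly since genericity in Definition~\ref{def: generic eigendata} is phrased purely in terms of eigenvalue multiplicities.
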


The next fundamental property of the character varieties is a much more difficult result.

\begin{theorem}[{\cite[Theorem 1.1.1]{hauselletellierrodriguez-villegas13}}]\label{eq: connected character variety}
If non-empty, $\mathfrak{M}_{g,k}$ is a connected variety.
\end{theorem}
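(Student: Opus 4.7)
The plan is to adopt the arithmetic strategy: no direct geometric proof of connectedness is apparent at this level of generality, but one can extract it from a count of $\mathbb{F}_q$-points. First, I would spread $\mathfrak{M}_{g,k}$ out over a ring of integers by choosing the eigenvalues $\lambda_{ij}$ to be algebraic and reducing modulo a prime of good reduction. The genericity hypothesis of Definition \ref{def: generic eigendata} ensures that $\PGL_n$ acts freely on $\mathfrak{R}_{g,k}$, so the GIT quotient is a geometric quotient and
\begin{equation*}
|\mathfrak{M}_{g,k}(\mathbb{F}_q)| = \frac{|\mathfrak{R}_{g,k}(\mathbb{F}_q)|}{|\PGL_n(\mathbb{F}_q)|}.
\end{equation*}

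Second, I would compute $|\mathfrak{R}_{g,k}(\mathbb{F}_q)|$ via Frobenius's classical formula for the number of tuples in a finite group $G = \GL_n(\mathbb{F}_q)$ satisfying a product-equation with prescribed conjugacy-class constraints:
\begin{equation*}
|\mathfrak{R}_{g,k}(\mathbb{F}_q)| = |G|^{2g-1} \sum_{\chi} \frac{\prod_{i=1}^k |C_i|\chi(C_i)}{\chi(1)^{2g+k-2}},
\end{equation*}
the sum ranging over the irreducible complex characters of $G$. The genericity condition on $\lambda_{ij}$ will translate into a non-vanishing condition on the relevant character values, which is what makes this sum behave nicely.

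Third, and this is the main step, I would analyse the character sum using Green's parametrisation of irreducible characters of $\GL_n(\mathbb{F}_q)$ by types. Re-expressing the sum via Hall--Littlewood and Macdonald symmetric functions, the aim is to show that it collapses to a polynomial $P(q) \in \mathbb{Z}[q]$ with leading coefficient equal to $1$. Finally, I would invoke Katz's polynomial-count principle: for a scheme whose $\mathbb{F}_q$-point counts agree with a single polynomial for all but finitely many $q$, that polynomial is the weight-polynomial specialisation of the compactly-supported mixed Hodge $E$-polynomial. For a smooth variety, the leading coefficient of this polynomial equals the number of connected components; since we have computed it to be $1$, connectedness of $\mathfrak{M}_{g,k}$ follows.

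The hard part is the third step: showing that the \textit{a priori} rational expression in $q$ collapses to a polynomial with leading coefficient $1$ requires delicate cancellations among the Green functions and the full Hall--Littlewood / Macdonald machinery, and is the substantive technical content of \cite{hauselletellierrodriguez-villegas13}. An alternative approach via non-abelian Hodge theory would seek to identify $\mathfrak{M}_{g,k}$ diffeomorphically with a moduli space of parabolic Higgs bundles, then use the $\mathbb{C}^\ast$-scaling action and properness of the Hitchin map to reduce connectedness to that of the nilpotent cone; this trades the combinatorial obstacle for a fixed-point analysis of comparable depth.
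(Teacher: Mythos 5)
The paper gives no proof of this theorem: it is stated as a black box with a citation to \cite{hauselletellierrodriguez-villegas13}, so there is no internal argument for your proposal to be compared against. Your sketch is a faithful high-level description of the arithmetic strategy in the cited reference: spread the variety out over a ring of integers, count $\mathbb{F}_q$-points via Frobenius's mass formula, organise the resulting character sum using Green's parametrisation and Hall--Littlewood/Macdonald symmetric-function machinery to show the count is a polynomial in $q$ whose leading coefficient is $1$, and then invoke Katz's polynomial-count theorem together with smoothness (the companion Theorem~\ref{thrm: smooth character variety}) to identify that leading coefficient with $\dim H^{2d}_c(\mathfrak{M}_{g,k};\mathbb{Q})$, i.e.\ with $\#\pi_0(\mathfrak{M}_{g,k})$. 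You correctly isolate the genuinely difficult step (the symmetric-function identity) and your orbit-count formula implicitly relies on Lang's theorem for the connected group $\PGL_n$, which is indeed how one justifies dividing by $|\PGL_n(\mathbb{F}_q)|$. The alternative route you mention, via parabolic non-abelian Hodge theory, the $\mathbb{C}^\ast$-action, and the Hitchin map, is also a viable strategy in the literature but is not the one taken by the cited reference. In short: the proposal is consistent with what is known, but the paper itself has nothing to compare it to---it simply defers the proof to \cite{hauselletellierrodriguez-villegas13}.
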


\begin{remark}\label{rem: set of DS solutions}
In the case of genus $g = 0$ (a punctured Riemann sphere), the points of $\mathfrak{R}_{0,k}$ describe $k$-tuples of matrices in chosen conjugacy classes whose product is the identity. Describing $\mathfrak{R}_{0,k}$ (in particular, determining whether it is non-empty) is the \textit{multiplicative Deligne-Simpson problem}:
\begin{equation}\label{eq: DS problem}
\mathfrak{R}_{0,k} = \{A_i \in C_i : A_1\cdots A_k = \Id_n\}.
\end{equation}
\end{remark}

\subsection{Calogero--Moser Spaces of Type $C^\vee C_n$}\label{subsec: CM space}

It is clear now that the variety $\mathcal M_n$ \eqref{eq: CM space} is a $\GL_{2n}(\mathbb{C})$-character variety $\mathfrak{M}_{0,4}$ of the four-punctured Riemann sphere, with \eqref{eq: our eigendata} describing the conjugacy classes $C_i$ at the punctures.

\begin{definition}\label{def: our CM space}
The \defn{Calogero--Moser space of type $C^\vee C_n$} is the character variety \eqref{eq: CM space} of the four-punctured sphere with conjugacy classes $C_i = [\Lambda_i]$ at the punctures chosen according to \eqref{eq: our eigendata}.
\end{definition}

\begin{definition}\label{def: generic paramteters}
We call $\bm{\tau} = (k_0,k_n,t,u_0,u_n)$ \defn{generic} if, for every $a,b,c,d,e \in \mathbb{Z}$ not all-zero,
\begin{equation*}
k_0^ak_n^bt^cu_0^du_n^e \neq 1.
\end{equation*}
\end{definition}

\begin{theorem}\label{thrm: CM space is smooth and irreducible}
For generic $\bm{\tau}$, the space $\mathcal{M}_{n}$ is a smooth irreducible $2n$-dimensional affine variety.
\end{theorem}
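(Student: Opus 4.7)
The plan is to invoke the general character-variety results, Theorem~\ref{thrm: smooth character variety} and Theorem~\ref{eq: connected character variety}, applied directly to $\mathcal{M}_n = \mathfrak{M}_{0,4}$ with conjugacy classes $[\Lambda_i]$ given by \eqref{eq: our eigendata}. Together, these yield smoothness (with the stated dimension) and connectedness, and a smooth connected equidimensional scheme is irreducible. Affineness is automatic, since $\mathcal{M}_n$ is a GIT quotient of an affine variety by the reductive group $\GL_{2n}(\mathbb{C})$. The dimension check is a direct computation: with $g=0$, $k=4$, matrix size $2n$, and $\sum_{i,j}\mu_{ij}^2 = 3\cdot 2n^2 + \bigl(n^2+(n-1)^2+1\bigr) = 8n^2-2n+2$, formula \eqref{eq: character variety dimension} yields $d = 2 + 2(2n)^2 - (8n^2-2n+2) = 2n$.

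Two hypotheses remain to be checked: (a) the conjugacy classes $[\Lambda_i]$ are generic in the sense of Definition~\ref{def: generic eigendata}, and (b) $\mathcal{M}_n$ is non-empty. For (a), first one verifies $\prod_{i,j}\lambda_{ij}^{\mu_{ij}} = 1$, which reduces to $\prod_i \det\Lambda_i = \prod_i (-1)^n = 1$, by direct inspection of \eqref{eq: our eigendata}. For the non-triviality clause, fix $1 \le s < 2n$ and admissible sub-multiplicities $\nu_{ij} \le \mu_{ij}$ summing to $s$ at each puncture. Parametrising by $\alpha_i$ (the number of ``negative'' eigenvalues chosen at puncture $i$) and by $\beta_4,\gamma_4$ from the finer strata of $\Lambda_4$, a short calculation writes $\prod\lambda_{ij}^{\nu_{ij}}$ as a sign times the monomial $k_0^{s-2\alpha_1}\,u_0^{s-2\alpha_2}\,u_n^{s-2\alpha_3}\,k_n^{s-2\alpha_4}\,t^{-2\beta_4+(2n-2)\gamma_4}$. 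Under the genericity of $\bm{\tau}$ in Definition~\ref{def: generic paramteters}, no non-trivial monomial in $k_0,k_n,t,u_0,u_n$ equals $1$, nor $-1$ (else its square would equal $1$, forcing all exponents to vanish and contradicting the value $-1$). Hence the product equals $1$ only if all $\bm{\tau}$-exponents vanish; the equation on the $t$-exponent combined with $\gamma_4\in\{0,1\}$ then forces $s\in\{0,2n\}$, contradicting $1\le s<2n$.

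For non-emptiness (b), one natural route is to invoke Crawley--Boevey's criterion for the multiplicative Deligne--Simpson problem: for generic semi-simple conjugacy classes with compatible determinants, non-emptiness of $\mathfrak{R}_{0,4}$ is equivalent to the dimension vector of the associated star-shaped quiver being a positive root of the attached Kac--Moody algebra, a condition which one verifies in our setting using the non-negative expected dimension $d = 2n$. Alternatively, non-emptiness can be witnessed \emph{a posteriori} via Section~\ref{sec: irreps}: any finite-dimensional representation of $\mathcal{H}_{1,\bm{\tau}}$ (e.g.\ built from the Basic Representation of Proposition~\ref{prop: Basic Representation}) furnishes a point of $\mathcal{M}_n$ under the map $\Phi$. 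A third option is an explicit construction of one tuple $(A_1,A_2,A_3,A_4)$ satisfying \eqref{eq: CM space}, modelled on the Wilson/Oblomkov examples.

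The main obstacle is (b), non-emptiness: the genericity verification (a) is a careful but elementary sign-and-exponent bookkeeping, and the remaining assertions follow formally from the cited character-variety theorems, but producing a point of $\mathcal{M}_n$ for arbitrary $n$ is non-trivial and requires either a Deligne--Simpson existence input or a representation-theoretic construction.
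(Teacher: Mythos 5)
Your approach matches the paper's: smoothness, equidimensionality and connectedness come from Theorems~\ref{thrm: smooth character variety} and~\ref{eq: connected character variety} (and you correctly add the verification, left implicit in the paper, that the conjugacy classes $[\Lambda_i]$ are generic in the sense of Definition~\ref{def: generic eigendata} when $\bm{\tau}$ is generic), and non-emptiness is the one genuine input that must come from elsewhere. The paper handles non-emptiness exactly via your third option: it defers to the explicit construction of the coordinate chart $\Upsilon$ in Section~\ref{sec: coordinates} (Corollary~\ref{cor: chart on CM space}), which exhibits a point of $\mathcal{M}_n$ directly from the matrices of Proposition~\ref{prop: matrices of Ai} at $q=1$.
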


\begin{proof}
The fact that $\mathcal{M}_n$ is non-empty will follow from an explicit construction of a coordinate chart in Section \ref{sec: coordinates}. The rest follows by applying Theorems \ref{thrm: smooth character variety} and \ref{eq: connected character variety}.
\end{proof}

\begin{remark}\label{rem: Oblomkov's CM space}
It is worth mentioning the space $CM_\tau$ considered by Oblomkov can be written as
\begin{equation*}
CM_\tau = \{X,Y \in \GL_n(\mathbb{C}) : XYX\inv Y\inv \in [\Lambda]\}\GIT\GL_n(\mathbb{C})
\end{equation*}
with $[\Lambda] \subseteq \GL_n(\mathbb{C})$ the conjugacy class represented by $\Lambda = \diag(\tau^{-2},\ldots,\tau^{-2},\tau^{2n-2})$. This is nothing but a $\GL_n$-character variety of a one-punctured torus. Hence, the use of Theorem \ref{eq: connected character variety} circumvents the rather technical proof of connectedness in \cite{oblomkovJun04}.
\end{remark}

\subsection{Alternative Form}\label{subsec: alternative form}

It is sometimes convenient to use the following description of the space $\mathcal{M}_n$. For $V = \mathbb{C}^{2n}$, let $\mathcal{R}_n$ be the set of $X,Y,T \in \GL(V)$ and $(v,w) \in \Hom(\mathbb{C},V)\oplus \Hom(V,\mathbb{C})$ subject to
\begin{align}
T - T\inv &= (u_0 - u_0\inv)\Id_V,\label{eq: T relation}\\
XT\inv - TX\inv &= (k_0 - k_0\inv)\Id_V,\label{eq: XT relation}\\
T\inv Y\inv - YT &=(u_n - u_n\inv)\Id_V,\label{eq: TY relation}\\
tYTX\inv - t\inv XT\inv Y\inv &= (k_nt\inv - k_n\inv t)\Id_V + (t - t\inv)vw,\label{eq: YTX relation}\\
wv &= \frac{t^{2n}-1}{t^2-1}k_n + \frac{1-t^{-2n}}{1-t^{-2}}k_n\inv.\label{eq: wv relation}
\end{align}
There is a natural action of $g \in \GL(V)$ on $\mathcal{R}_n$, namely
\begin{equation}\label{eq: alternative form action}
g \act (X,Y,T,v,w) \coloneqq (gXg\inv, gYg\inv, gTg\inv, gv, wg\inv).
\end{equation}

\begin{proposition}\label{prop: alternative form}
Let $\bm{\tau} = (k_0,k_n,t,u_0,u_n)$ be generic in the sense of {\normalfont Definition \ref{def: generic paramteters}}. Then, the action \eqref{eq: alternative form action} on $\mathcal{R}_n$ is free and we have an isomorphism of varieties $\mathcal{M}_n \cong \mathcal{R}_n/\GL(V)$.
\end{proposition}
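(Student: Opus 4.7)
The plan is to construct mutually inverse $\GL(V)$-equivariant morphisms between $\mathcal{R}_n$ and the pre-quotient space $\mathfrak{R}_{0,4}$ defining $\mathcal{M}_n$; these will descend to the claimed isomorphism, and freeness will follow from the analysis of stabilizers. Define $\Phi\colon \mathcal{R}_n \to \mathfrak{R}_{0,4}$ by
\begin{equation*}
(X, Y, T, v, w) \longmapsto (A_1, A_2, A_3, A_4), \qquad A_1 := XT\inv,\ A_2 := T,\ A_3 := T\inv Y\inv,\ A_4 := YTX\inv.
\end{equation*}
The identity $A_1A_2A_3A_4 = \Id_V$ holds by direct cancellation. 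Relations \eqref{eq: T relation}, \eqref{eq: XT relation}, \eqref{eq: TY relation} translate exactly into $A_i - A_i\inv = c_i\Id$ for $i = 2, 1, 3$, forcing each such $A_i$ to satisfy a quadratic whose two distinct roots are the eigenvalues appearing in $\Lambda_i$; hence $A_i$ is diagonalizable with spectrum contained in $\{u_0, -u_0\inv\}$, $\{k_0, -k_0\inv\}$, $\{u_n, -u_n\inv\}$ respectively.

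For $A_4$, a direct calculation shows that the scalar $t\lambda - t\inv\lambda\inv - (k_nt\inv - k_n\inv t)$ vanishes for $\lambda \in \{-k_n\inv,\, k_nt^{-2}\}$ and equals $(t - t\inv)\bigl[\tfrac{t^{2n}-1}{t^2-1}k_n + \tfrac{1-t^{-2n}}{1-t^{-2}}k_n\inv\bigr]$ at $\lambda = k_nt^{2n-2}$. Thus for any $A \in C_4$, the operator $tA - t\inv A\inv - (k_nt\inv - k_n\inv t)\Id$ has rank $\leq 1$ with precisely that trace --- exactly what relations \eqref{eq: YTX relation}--\eqref{eq: wv relation} impose, with $(v, w)$ uniquely determined up to the rescaling $(v, w) \sim (\alpha v, \alpha\inv w)$. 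To build the inverse $\Psi$, given $(A_1,\dots,A_4) \in \mathcal{M}_n$, set $T := A_2$, $X := A_1A_2$, $Y := A_2\inv A_3\inv$, and factor the rank-$1$ matrix $tA_4 - t\inv A_4\inv - (k_nt\inv - k_n\inv t)\Id$ as $(t - t\inv)vw$; all five relations follow from $A_i \in C_i$.

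Equivariance of $\Phi$ is immediate from the formulas: conjugation by $g$ on $(A_1, \ldots, A_4)$ corresponds to $g \act (X,Y,T,v,w)$, while the residual $\mathbb{C}^\ast$-ambiguity in $(v, w)$ is absorbed into the central $\mathbb{C}^\ast \subseteq \GL(V)$. Hence $\Phi$ and $\Psi$ descend to mutually inverse algebraic morphisms $\mathcal{R}_n/\GL(V) \leftrightarrow \mathcal{M}_n$. For freeness, any stabilizer $g$ commutes with all $A_i$, and genericity of $\bm{\tau}$ implies (via Definition \ref{def: generic eigendata}) that the tuple $(A_1, \ldots, A_4)$ is irreducible, so $g = \lambda\Id$; since \eqref{eq: wv relation} gives $wv \neq 0$ for generic $\bm{\tau}$, we have $v \neq 0$, and $gv = v$ then forces $\lambda = 1$.

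The principal obstacle is a bookkeeping issue with multiplicities: relations \eqref{eq: T relation}--\eqref{eq: TY relation} do not a priori pin down the eigenvalue-multiplicity splits of $A_1, A_2, A_3$, and the rank-$1$ condition on $A_4$ leaves the split on the kernel of the rank-$1$ operator free. Consequently $\mathcal{R}_n$ a priori decomposes into several components indexed by these multiplicities, and one must restrict to the component where the splits are $(n, n)$, $(n,n)$, $(n,n)$ and $(n, n-1, 1)$. This component is singled out by comparing dimensions with Theorem \ref{thrm: CM space is smooth and irreducible} and invoking the irreducibility of $\mathcal{M}_n$ there.
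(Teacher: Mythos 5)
Your forward and inverse maps are exactly the paper's, and the freeness argument via irreducibility of the tuple $(A_1,\dots,A_4)$ and $v \neq 0$ is fine. But the last paragraph, where you acknowledge the multiplicity issue as the ``principal obstacle,'' is precisely where the proof has a gap, and your proposed fix does not close it.

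The statement to prove is $\mathcal{M}_n \cong \mathcal{R}_n/\GL(V)$ for the full variety $\mathcal{R}_n$ cut out by \eqref{eq: T relation}--\eqref{eq: wv relation}, so one cannot simply ``restrict to the component'' with the right multiplicity pattern---one must show the other hypothetical components are empty. Invoking the irreducibility of $\mathcal{M}_n$ (Theorem \ref{thrm: CM space is smooth and irreducible}) cannot accomplish this: that theorem concerns $\mathcal{M}_n$, and until you know $\mathcal{R}_n/\GL(V) \cong \mathcal{M}_n$ it tells you nothing about whether $\mathcal{R}_n$ has extra components with different eigenvalue splits; and dimension counting also fails, since a component with the wrong split could have the same dimension. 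The paper closes the gap with a short determinant argument. Taking $\det$ of $A_1A_2A_3A_4 = \Id_V$ and writing each $\det(A_i)$ in terms of the (unknown) multiplicities $m_i$ produces a monomial relation of the form $k_0^a k_n^b t^c u_0^d u_n^e = 1$ whose exponents are affine in the $m_i$; genericity of $\bm{\tau}$ (Definition \ref{def: generic paramteters}) forces all exponents to vanish, which simultaneously gives $m_1 = m_2 = m_3 = n$ and rules out the $\lambda = -k_n^{-1}t^{-2n}$ branch for the distinguished eigenvalue of $A_4$ (it would leave a nonzero power of $t$). This is the one essential idea your proof is missing.
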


\begin{proof}
To semi-simple matrices $A_1, A_2, A_3, A_4$ with the eigenvalues \eqref{eq: our eigendata}, associate $X = A_1A_2$, $Y = A_4A_1 = A_3\inv A_2\inv$ and $T = A_2$. Let $\lambda_1, \dots, \lambda_{2n}$ be the eigenvalues of $A_4$ in \textit{reverse} order as they appear in \eqref{eq: our eigendata}, with respective corresponding eigenvectors $v_1, \dots, v_{2n}$. We then take $v$ to be the map $\mathbb{C} \to V$ sending $1$ to $v_1$, and define the map $w : V \to \mathbb{C}$ by
\begin{equation*}
w(v_1) = \frac{t^{2n}-1}{t^2-1}k_n + \frac{1-t^{-2n}}{1-t^{-2}}k_n\inv, \qquad w(v_i) = 0 \ \text{for $i=2,\dots,n$}.
\end{equation*}
It is easy to see that such $X, Y, T$, $v, w$ satisfy the relations \eqref{eq: T relation}--\eqref{eq: wv relation}.

As for a map in the opposite direction, we take $X, Y, T$, $v, w$ satisfying the relations \eqref{eq: T relation}--\eqref{eq: wv relation} and define $A_1 = XT\inv$, $A_2 = T$, $A_3 = T\inv Y\inv$ and $A_4 = Y T X\inv$. Obviously, $A_1A_2A_3A_4 = \Id_V$. From \eqref{eq: XT relation}, $A_1$ is diagonalisable with eigenvalues from the set $\{k_0, -k_0\inv\}$; one can argue similarly for $A_2$ and $A_3$, but we still need to establish that the eigenvalues have equal multiplicity. On the other hand, \eqref{eq: YTX relation} and \eqref{eq: wv relation} will imply that $tA_4 - t\inv A_4\inv$ is diagonalisable with two eigenvalues: $k_nt\inv - k_n\inv t$ (of multiplicity $2n-1$) and $k_nt^{2n-1} - k_n\inv t^{-2n+1}$ (of multiplicity one). This implies $A_4$ is also diagonalisable, with $2n-1$ eigenvalues from the set $\{k_nt^{-2}, -k_n\inv\}$ and one from $\{k_nt^{2n-2}, -k_n\inv t^{-2n}\}$. But the product of the matrix determinants is one, which gives us a relation of the form $k_0^ak_n^bt^cu_0^du_n^e = 1$ with some $a, b, c, d, e \in \mathbb{Z}$. Since the parameters are generic, the only possibility is that $a = b = c = d = e = 0$. This forces $A_1, A_2, A_3$ to have two eigenvalues each of multiplicity $n$. This, in turn, implies that $\det(A_4) = (-1)^n$, and we find that this is only possible if $A_4$ has eigenvalues as in \eqref{eq: our eigendata}.
\end{proof}

\subsection{Representations of Quivers and Multiplicative Quiver Varieties}\label{subsec: quiver varieties}

In the genus $g = 0$ case, the character varieties $\mathfrak{M}_{0,k}$ of a punctured Riemann sphere can be interpreted in the context of representations of star-shaped quivers and multiplicative quiver varieties, following Crawley-Boevey and Shaw \cite{crawley-boevey,crawley-boeveyshaw}. This is briefly recalled below.

Throughout, our quivers $Q = (Q_0,Q_1)$ have vertices $Q_0$ and arrows $Q_1$. For any arrow $v \xrightarrow{a} w$, we call $h(a) = w$ the \textit{head} and $t(a) = v$ the \textit{tail}. A \textit{path} is a concatenation of arrows, read right-to-left (this agrees with the convention in \cite{crawley-boeveyshaw}). A \textit{quiver representation} is an assignment of vector spaces to the vertices and linear maps to the arrows. The \textit{dimension vector} $\mathbf{n} = (n_v)_{v \in Q_0}$ is the tuple of vector space dimensions, so the corresponding space of representations of $Q$ is
\begin{equation}\label{eq: Rep(Q)}
\Rep(Q,\mathbf{n}) \cong \prod_{a \in Q_1}\Mat_{n_{h(a)} \times n_{t(a)}}(\mathbb{C}).
\end{equation}

There is a natural action on the space \eqref{eq: Rep(Q)} by simultaneous conjugation, in other words by
\begin{equation}\label{eq: GL(n)}
\GL(\mathbf{n}) \coloneqq \prod_{v \in Q_0}\GL_{n_v}(\mathbb{C}).
\end{equation}
Of course, the action by $\{z\Id_n : z \neq 0\} \cong \mathbb{C}^\ast$ is trivial, so we can consider $\PGL(\mathbf{n}) \coloneqq \GL(\mathbf{n})/\mathbb{C}^\ast$.

The \textit{double} of $Q$ is $\overline{Q} = (Q_0,\overline{Q}_1)$ which has the same vertex set but an enlarged arrow set, where for every arrow $v \xrightarrow{a} w$ in $Q_1$, the reverse arrow $w \xrightarrow{a^\ast} v$ is adjoined. The \textit{path algebra} $\mathbb{C}Q$ is the algebra generated by the arrows $a \in Q_1$ and trivial paths $e_v$ at each $v \in Q_0$. We write $\mathbb{C}\overline{Q}[(1 + aa^\ast)\inv]$ for the path algebra of $\overline{Q}$ with formally inverted elements of the form $1 + aa^\ast$ for $a \in \overline{Q}_1$. For $a \in \overline{Q}_1$, we define $\varepsilon(a)$ to be $1$ if $a \in Q_1$ and $-1$ otherwise.

\begin{definition}[{\cite[Definition 1.2]{crawley-boeveyshaw}}]\label{def: Lambda q}
Let $Q$ be a quiver, fix a tuple $\mathbf{q} = (q_v) \in (\mathbb{C}^\ast)^\abs{Q_0}$ and a total order on the doubled arrow set $\overline{Q}_1$. The associated \defn{multiplicative preprojective algebra} is
\begin{equation*}
\Lambda^\mathbf{q} \coloneqq \mathbb{C}\overline{Q}[(1 + aa^\ast)\inv]\Big/\inner{\prod_{a \in \overline{Q}_1}(1 + aa^\ast)^{\varepsilon(a)} - \sum_{v \in Q_0}q_ve_v},
\end{equation*}
where the product over $\overline{Q}_1$ is taken with respect to the chosen total order.
\end{definition}

The representation space $\Rep(\Lambda^\mathbf{q},\mathbf{n})$ is the subset of $\Rep(\overline{Q},\mathbf{n})$ where the linear maps at each vertex satisfy the same relations as in $\Lambda^\mathbf{q}$ and the maps representing $1 + aa^\ast$ are invertible.

\begin{definition}\label{def: multiplicative quiver variety}
A \defn{multiplicative quiver variety} is defined as $\mathcal{M}_{\mathbf{q},\mathbf{n}}(Q) = \Rep(\Lambda^\mathbf{q},\mathbf{n}) \GIT \PGL(\mathbf{n})$.
\end{definition}

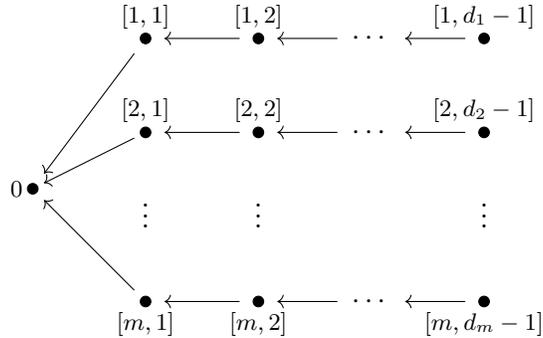
\begin{figure}[H]
\centering
\begin{tikzpicture}
\filldraw (-0.25,0) circle (2pt) node[left]{\footnotesize$0$};
\filldraw (1.25,2) circle (2pt) node[above]{\footnotesize$[1,1]$};
\filldraw (1.25,0.75) circle (2pt) node[above]{\footnotesize$[2,1]$};
\filldraw (1.25,-1.5) circle (2pt) node[below]{\footnotesize$[m,1]$};
\filldraw (2.75,2) circle (2pt) node[above]{\footnotesize$[1,2]$};
\filldraw (2.75,0.75) circle (2pt) node[above]{\footnotesize$[2,2]$};
\filldraw (2.75,-1.5) circle (2pt) node[below]{\footnotesize$[m,2]$};
\filldraw (5.75,2) circle (2pt) node[above]{\footnotesize$[1,d_1 - 1]$};
\filldraw (5.75,0.75) circle (2pt) node[above]{\footnotesize$[2,d_2 - 1]$};
\filldraw (5.75,-1.5) circle (2pt) node[below]{\footnotesize$[m,d_m - 1]$};
\draw[->] (1.1,1.8) -- (-0.1,0.2);
\draw[->] (1.1,0.675) -- (-0.1,0.075);
\draw[->] (1.1,-1.35) -- (-0.1,-0.15);
\draw[->] (2.5,2) -- (1.5,2);
\draw[->] (2.5,0.75) -- (1.5,0.75);
\draw[->] (2.5,-1.5) -- (1.5,-1.5);
\draw[->] (3.8,2) -- (3,2);
\draw[->] (3.8,0.75) -- (3,0.75);
\draw[->] (3.8,-1.5) -- (3,-1.5);
\draw[->] (5.5,2) -- (4.7,2);
\draw[->] (5.5,0.75) -- (4.7,0.75);
\draw[->] (5.5,-1.5) -- (4.7,-1.5);
\node at (1.25,-0.25) {$\vdots$};
\node at (2.75,-0.25) {$\vdots$};
\node at (5.75,-0.25) {$\vdots$};
\node at (4.25,2) {$\cdots$};
\node at (4.25,0.75) {$\cdots$};
\node at (4.25,-1.5) {$\cdots$};
\end{tikzpicture}
\caption{The star-shaped quiver $Q$.}
\label{fig: star-shaped quiver}
\end{figure}

Now let $(A_1, \dots, A_k)$ be a solution of the multiplicative Deligne-Simpson problem \eqref{eq: DS problem} with semi-simple conjugacy classes, where $A_i$ has eigenvalues $\lambda_{ij}$. Associated to this is the multiplicative preprojective algebra of the star-shaped quiver (see Figure \ref{fig: star-shaped quiver} above). Here, $\mathbf{q} = (q_v)$ is given by
\begin{equation}\label{eq: vector q}
q_0 = \prod_{i=1}^k\frac{1}{\lambda_{i1}}, \qquad q_{[i,j]} = \frac{\lambda_{ij}}{\lambda_{i\,j+1}}.
\end{equation}
The corresponding representation of $\Lambda^\mathbf{q}$ is that with vector spaces
\begin{equation*}
V_0 = \mathbb{C}^n, \qquad V_{[i,j]} = \image\big((A_i - \lambda_{i1})\cdots(A_i - \lambda_{ij})\big),
\end{equation*}
meaning the dimension vector is $\mathbf{n} = (n_0, n_{[i,j]})$ where
\begin{equation*}
n_0 = n, \qquad n_{[i,j]} = \rank\big((A_i - \lambda_{i1})\cdots(A_i - \lambda_{ij})\big).
\end{equation*}
Let $a_{ij}$ denote the arrow with tail at the vertex $[i,j]$. The (surjective) linear maps $X_{a_{ij}^\ast}$ are the obvious products, and (injective) linear maps $X_{a_{ij}}$ are inclusions. On the other hand, one can extract a Deligne-Simpson solution from such a representation of $\Lambda^\mathbf{q}$, namely where
\begin{equation*}
A_i = \lambda_{i1}(1 + X_{a_{i1}}X_{a_{i1}^\ast}).
\end{equation*}
A similar construction works more generally for arbitrary conjugacy classes, see \cite{crawley-boevey,crawley-boeveyshaw}.

For generic semi-simple conjugacy classes (in the sense of Definition \ref{def: generic eigendata}), all solutions to the associated Deligne-Simpson problem are irreducible. The proof of \cite[Lemma 8.3]{crawley-boeveyshaw} in that situation implies the following result.

\begin{proposition}\label{prop: character variety isomorphic to quiver variety}
For generic eigenvalues $\lambda_{ij}$, the character variety $\mathfrak{M}_{0,k}$ and the multiplicative quiver variety $\mathcal{M}_{\mathbf{q},\mathbf{n}}(Q)$ are isomorphic. In particular, both are smooth irreducible affine varieties.
\end{proposition}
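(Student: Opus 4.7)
The plan is to construct mutually inverse morphisms between the character variety and the multiplicative quiver variety, essentially upgrading the explicit constructions sketched just before the proposition, and then invoke irreducibility under genericity to descend to the GIT quotients.

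First, I would verify that the forward map $\mathfrak{M}_{0,k} \to \mathcal{M}_{\mathbf{q},\mathbf{n}}(Q)$ is well-defined as an algebraic map. Starting from a semi-simple tuple $(A_1,\dots,A_k)$ with $A_1\cdots A_k=\Id$, one takes $V_0=\mathbb{C}^n$ and $V_{[i,j]}=\image((A_i-\lambda_{i1})\cdots(A_i-\lambda_{ij}))$, with $X_{a_{ij}}$ the natural inclusion and $X_{a_{ij}^\ast}$ the induced surjection. A direct computation identifies $\lambda_{i1}(1+X_{a_{i1}}X_{a_{i1}^\ast})$ with $A_i$ acting on $V_0$; the analogous identification on each subspace $V_{[i,j]}$ together with the minimal polynomial relation for the semi-simple $A_i$ shows that the multiplicative preprojective relation at vertex $0$ becomes precisely $A_1\cdots A_k=\Id$, while the relations at the leaf vertices are automatic. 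Naturality under base change shows this assignment is $\GL_n(\mathbb{C})$-equivariant via the block embedding $\GL_n(\mathbb{C})/\mathbb{C}^\ast\hookrightarrow\PGL(\mathbf{n})$ acting diagonally on the $V_{[i,j]}$.

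Second, I would construct the inverse map by reading off a Deligne--Simpson solution from a representation of $\Lambda^{\mathbf{q}}$ via $A_i\coloneqq\lambda_{i1}(1+X_{a_{i1}}X_{a_{i1}^\ast})$. The multiplicative preprojective relation at $0$ yields $A_1\cdots A_k=\Id$, and the prescribed dimension vector $\mathbf{n}$ together with genericity of the eigenvalues forces each $A_i$ to have exactly the right spectrum with correct multiplicities. The two constructions are mutually inverse on the level of points with preferred trivialisations of the vector spaces $V_{[i,j]}$, and they intertwine the respective group actions.

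The main obstacle, and where genericity enters crucially, is promoting this representation-theoretic bijection to an isomorphism of affine GIT quotients. Here I would invoke \cite[Lemma 8.3]{crawley-boeveyshaw}, which shows that under Definition \ref{def: generic eigendata} every solution to the Deligne--Simpson problem (equivalently, every representation of $\Lambda^{\mathbf{q}}$ with dimension vector $\mathbf{n}$) is irreducible. Consequently the $\GL_n(\mathbb{C})/\mathbb{C}^\ast$-action on $\mathfrak{R}_{0,k}$ and the $\PGL(\mathbf{n})$-action on $\Rep(\Lambda^\mathbf{q},\mathbf{n})$ are both free, each GIT quotient coincides with its orbit space, and the set-theoretic bijection established above is an algebraic isomorphism (algebraicity of the inverse follows by the same formulae as the forward map, now applied fibrewise over the quotient). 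The concluding smoothness and irreducibility then follow on the character-variety side from Theorems \ref{thrm: smooth character variety} and \ref{eq: connected character variety}, transported across the isomorphism to the quiver side; the expected bottleneck in a truly self-contained proof would be reproving \cite[Lemma 8.3]{crawley-boeveyshaw}, which is a delicate combinatorial argument about root systems of the underlying star-shaped quiver.
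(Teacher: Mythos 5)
Your proposal follows the same approach as the paper: the paper does not give a detailed proof but simply notes that, under the genericity hypothesis, all Deligne--Simpson solutions (equivalently all $\Lambda^{\mathbf{q}}$-modules of dimension $\mathbf{n}$) are irreducible, and then invokes the argument of \cite[Lemma 8.3]{crawley-boeveyshaw}; your writeup fills in precisely the correspondence between tuples $(A_i)$ and quiver representations that the paper sketches just before the proposition. Two small points of care: the relations of $\Lambda^{\mathbf{q}}$ at the leaf vertices are not quite ``automatic'' --- they encode the prescribed eigenvalues and multiplicities of the semi-simple $A_i$ (which is why $\mathbf{q}$ is defined via \eqref{eq: vector q}), though they do hold by construction of the inclusions and surjections; and to promote the orbit bijection to an algebraic isomorphism one should work on the level of the intermediate space carrying the trivialisations of the $V_{[i,j]}$ (a principal $\prod \GL_{n_{[i,j]}}$-bundle over $\mathfrak{R}_{0,k}$) and descend using freeness of both actions, which is what your remark about ``preferred trivialisations'' amounts to.
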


\begin{remark}\label{rem: our case}
The character variety of interest to us is that of a four-punctured sphere, meaning the corresponding multiplicative preprojective algebra is that of a framed affine Dynkin quiver of type $\widetilde{D}_4$, as sketched in Figure \ref{fig: CM quiver} below. The framed quiver, denoted $Q^\infty$, is obtained by extending one of the legs of the usual affine Dynkin quiver $Q = \widetilde{D}_4$.
\begin{figure}[H]
\centering
\begin{tikzpicture}
\filldraw (0,0) circle (2pt);
\filldraw (0,2) circle (2pt) node[above] {\footnotesize$1$};
\filldraw (-2,0) circle (2pt) node[above] {\footnotesize$2$};
\filldraw (0,-2) circle (2pt) node[below] {\footnotesize$3$};
\filldraw (2,0) circle (2pt) node[above] {\footnotesize$4$};
\filldraw (4,0) circle (2pt) node[above] {\footnotesize$\infty$};
\draw[->] (0,1.8) -- (0,0.2);
\draw[->] (-1.8,0) -- (-0.2,0);
\draw[->] (0,-1.8) -- (0,-0.2);
\draw[->] (1.8,0) -- (0.2,0);
\draw[->] (3.8,0) -- (2.2,0);
\end{tikzpicture}
\caption{The quiver $Q^\infty$ corresponding to the Calogero--Moser space $\mathcal{M}_n$.}
\label{fig: CM quiver}
\end{figure}
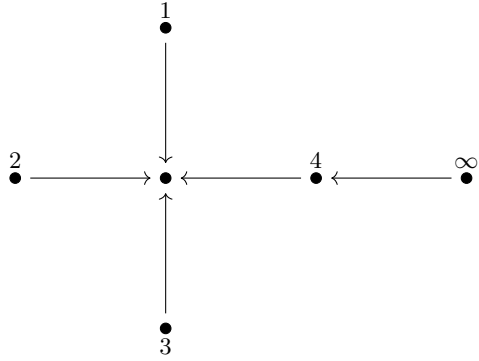
Let $\lambda_{ij}$ be the $j^\text{th}$ eigenvalue of $A_i$ chosen in the \textit{reverse} order compared to \eqref{eq: our eigendata}. With this, the parameters and the dimension vector for $Q^\infty$ become
\begin{equation}\label{eq: our quiver data}
\begin{aligned}
\mathbf{q} &= (q_0,q_1,q_2,q_3,q_4,q_\infty) = (k_0u_0u_nk_n, -k_0^{-2}, -u_0^{-2}, -u_n^{-2}, -k_n^{-2}t^2, t^{-2n}),\\
\mathbf{n} &= (n_0,n_1,n_2,n_3,n_4,n_\infty) = (2n,n,n,n,n,1).
\end{aligned}
\end{equation}
\end{remark}

\begin{corollary}\label{cor: CM space isomorphic to quiver variety}
For generic $\bm{\tau} = (k_0,k_n,t,u_0,u_n)$, the Calogero--Moser space $\mathcal{M}_n$ is isomorphic to the multiplicative quiver variety $\mathcal{M}_{\mathbf{q},\mathbf{n}}(Q^\infty)$ with the quiver data $\mathbf{q}$ and $\mathbf{n}$ as in \eqref{eq: our quiver data}.
\end{corollary}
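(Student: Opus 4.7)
The plan is to apply Proposition \ref{prop: character variety isomorphic to quiver variety} directly. Since by Definition \ref{def: our CM space} the space $\mathcal{M}_n$ coincides with the character variety $\mathfrak{M}_{0,4}$ attached to the semi-simple conjugacy classes in \eqref{eq: our eigendata}, the only substantive work is twofold: (i) to check that these eigenvalues are generic in the sense of Definition \ref{def: generic eigendata} whenever $\bm{\tau}$ is generic in the sense of Definition \ref{def: generic paramteters}; and (ii) to verify that the star-shaped construction of Section \ref{subsec: quiver varieties} yields the specific quiver $Q^\infty$, dimension vector $\mathbf{n}$, and parameter vector $\mathbf{q}$ stated in \eqref{eq: our quiver data}.

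For (i), the product-of-all-multiplicities condition reduces to $\prod_i \det(\Lambda_i) = 1$, which is immediate: $\det(\Lambda_i) = (-1)^n$ for each $i$, where for $i = 4$ the $k_n$-exponents telescope as $-n + (n-1) + 1 = 0$ and the $t$-exponents as $-2(n-1) + (2n-2) = 0$. For the no-proper-sub-selection condition, I would express any such sub-product as a sign times a monomial $k_0^a u_0^b u_n^c k_n^d t^e$ with exponents depending linearly on the chosen multiplicities $\nu_{ij}$; setting it equal to $1$ and invoking genericity of $\bm{\tau}$ forces $a = b = c = d = e = 0$ and an even sign. The resulting linear system, whose only nontrivial $t$-constraint comes from leg $4$ (the unique leg on which $t$ appears), admits only the solutions $s = 0$ and $s = 2n$, both of which are excluded by the range $1 \le s < 2n$.

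For (ii), I would order the eigenvalues of each $\Lambda_i$ with the largest multiplicity first, so that $\lambda_{i1}$ has multiplicity $n$ for $i = 1, 2, 3$ and multiplicities $n, n-1, 1$ for $i = 4$. The dimension-vector prescription $n_{[i,j]} = \rank\prod_{\ell \le j}(\Lambda_i - \lambda_{i\ell})$ then produces a length-one leg of dimension $n$ for each $i = 1, 2, 3$ and a length-two leg for $i = 4$ with vertex dimensions $2n - n = n$ and $2n - n - (n-1) = 1$, the latter being the framing vertex $\infty$; this recovers the quiver $Q^\infty$ of Figure \ref{fig: CM quiver} and the dimension vector $\mathbf{n} = (2n, n, n, n, n, 1)$. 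Applying \eqref{eq: vector q} to the same ordering gives $q_0 = \prod_i \lambda_{i1}^{-1} = k_0 u_0 u_n k_n$, $q_i = \lambda_{i1}/\lambda_{i2}$ equal to $-k_0^{-2}$, $-u_0^{-2}$, $-u_n^{-2}$, $-k_n^{-2}t^2$ respectively, and $q_\infty = \lambda_{42}/\lambda_{43} = t^{-2n}$, matching \eqref{eq: our quiver data} exactly.

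No serious obstacle arises: once (i) and (ii) are checked, Proposition \ref{prop: character variety isomorphic to quiver variety} immediately yields the isomorphism $\mathcal{M}_n \cong \mathcal{M}_{\mathbf{q},\mathbf{n}}(Q^\infty)$. The only mild bookkeeping subtlety is the identification of leg $4$ as the one carrying the framing vertex, which is forced by $\Lambda_4$ being the unique conjugacy class among the four with three distinct eigenvalues.
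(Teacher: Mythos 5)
Your proposal is correct, and it follows the same route the paper has in mind: the Corollary is presented as an immediate consequence of Proposition~\ref{prop: character variety isomorphic to quiver variety} applied via the star-shaped construction of \S\ref{subsec: quiver varieties} and the data in Remark~\ref{rem: our case}. The paper gives no explicit proof, so your write-up usefully supplies the two pieces of bookkeeping the paper leaves implicit.

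Two remarks on the details. First, your genericity check (i) is exactly the right argument, and the key point---which you state somewhat compressed but which is the crux---is that after forcing the exponents of $k_0,u_0,u_n,k_n$ and $t$ all to vanish, one obtains $\nu_{4,3} = s/(2n)$, and since $\nu_{4,3}\le\mu_{4,3}=1$ is a non-negative integer this leaves only $s=0$ or $s=2n$, both outside the required range. It is worth writing this step out explicitly, since without the integrality/multiplicity bound $\nu_{4,3}\le 1$ the linear system alone does not force $s\in\{0,2n\}$. Second, note a small discrepancy with the paper's wording: Remark~\ref{rem: our case} says the $\lambda_{ij}$ are taken in ``reverse order'' compared to \eqref{eq: our eigendata}, but that ordering (i.e.\ $\lambda_{41}=k_nt^{2n-2}$ of multiplicity one) would give $n_{[4,1]} = 2n-1$ and $q_0 = (k_0u_0u_nk_nt^{2n-2})^{-1}$, neither of which matches \eqref{eq: our quiver data}. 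Your ordering---largest multiplicity first, which coincides with the order in which the blocks appear in \eqref{eq: our eigendata}---is the one that actually reproduces the stated $\mathbf{q}$ and $\mathbf{n}$, so your computation is the correct one and the paper's ``reverse'' appears to be a slip.
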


\section{From Representations of the DAHA to Character Varieties}\label{sec: irreps}

\begin{notation}\label{not: classical DAHA}
Throughout, $\mathcal{H} \coloneqq \mathcal{H}_{1,\bm{\tau}}$ is the DAHA of type $C^\vee C_n$ at the classical level $q = 1$.
\end{notation}

One of the key constructions of \cite{etingofganoblomkov} is a map from the set of irreducible representations of the GDAHAs for $Q = \widetilde{D}_4$, $\widetilde{E}_6, \widetilde{E}_7, \widetilde{E}_8$ and $q = 1$ (that are regular in a certain sense) to a suitable character variety $\mathfrak{M}_{0,k}$. In our situation, for $Q = \widetilde D_4$, this provides us with a map
\begin{equation*}
\Phi : \Irrep'(\mathcal{H}) \to \mathcal{M}_n,
\end{equation*}
where $\Irrep'(\mathcal{H}) \subseteq \Irrep(\mathcal{H})$ is the subset of irreducible representations that restrict to the regular representation of the finite Hecke algebra $H_n \subseteq \mathcal{H}$. Thus, $\dim(V) = 2^nn!$ for all $V \in \Irrep'(\mathcal{H})$.

\begin{remark}\label{rem: Irrep'(H) equal to Irrep(H)}
Although non-obvious a priori, our main result implies (see Corollary \ref{cor: main result} below)
\begin{equation*}
\Irrep'(\mathcal{H}) = \Irrep(\mathcal{H}).
\end{equation*}
\end{remark}

\subsection{The Etingof-Gan-Oblomkov Map}\label{subsec: EGO map}

Recall the one-dimensional representation $\chi : H_n \to \mathbb C$ that assigns $T_i \mapsto t$ ($i = 1,\dots,n-1$) and $T_n \mapsto k_n$. Consider the subalgebra $H_n' \subseteq H_n$ generated by $T_2,\dots,T_n$, and denote by $\chi' : H_n' \to \mathbb{C}$ the restriction of $\chi$ to $H_n'$. Following \cite[\S4.3]{etingofganoblomkov}, for a representation $V \in \Rep(H_n)$, consider
\begin{equation*}
V' = \{f : \chi' \to V\},
\end{equation*}
the space of homomorphisms between $\chi'$ and $V$, viewed as $H_n'$-modules. We may consider $V'$ as a subspace of $V$. Note that if $V \in \Irrep'(\mathcal{H})$, the dimension of the subspace $V'$ is $\dim(V') = 2n$.

\begin{proposition}[{\cite[cf. Proposition 5.2.10]{etingofganoblomkov}}]\label{prop: EGO map}
For $S,S^\dagger$ in \eqref{eq: S and S dagger}, let $Z_i \in \mathcal{H}$ be the elements
\begin{equation}\label{eq: Zi}
Z_1 = T_0, \qquad Z_2 = T_0^\vee, \qquad Z_3 = ST_n^\vee S\inv, \qquad Z_4 = ST_nS^\dagger.
\end{equation}
\begin{enumerate}[label=\rom]
	\item For $V \in \Irrep'(\mathcal{H})$, the elements $Z_i$ commute with $H_n'$ and hence preserve the subspace $V'$.
	\item For $V \in \Irrep'(\mathcal{H})$, denote by $A_i \in \End_{\mathbb{C}}(V')$ the restriction of $Z_i$ onto the subspace $V'$:
	\begin{equation}\label{eq: Ai}
	A_i = Z _i\big\rvert_{V'}, \quad i = 1,2,3,4.
	\end{equation}
	Then, the assignment $V \mapsto (A_1, A_2, A_3, A_4)$ defines a map $\Phi :\ \Irrep'(\mathcal{H})\to \mathcal{M}_n$.
\end{enumerate}
\end{proposition}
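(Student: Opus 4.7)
The strategy is to prove (i) by direct calculation in the DAHA and (ii) by combining Lemma \ref{lem: product of DAHA elements is the identity} with the quadratic DAHA relations, supplemented by rank arguments drawn from the multiplicative quiver-variety description.

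For part (i), $[Z_1, T_j] = 0$ for $j \geq 2$ is immediate from relation (iv), and for $Z_2 = q^{-1/2}T_0^{-1}X_1$ it follows since both $T_0$ and $X_1$ commute with $T_2, \ldots, T_n$ by (iv) and (ix). For $Z_3$ and $Z_4$, I would mirror the Weyl group fact that $Ss_nS^{-1}$ (the sign-change of the first coordinate in the reflection representation) commutes with the parabolic subgroup $\langle s_2, \ldots, s_n\rangle$. A short braid induction yields $S^{-1}T_jS = T_{j-1}$ for $j = 2, \ldots, n-1$, so since $T_n^\vee = X_n^{-1}T_n^{-1}$ and $T_n$ both commute with $T_1, \ldots, T_{n-2}$, the commutations $[Z_3, T_j] = [Z_4, T_j] = 0$ hold for these $j$. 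The case $j = n$ is a separate direct braid calculation.

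For part (ii), the identity $A_1 A_2 A_3 A_4 = \Id_{V'}$ is immediate: substituting the definitions gives $Z_1 Z_2 Z_3 Z_4 = T_0 T_0^\vee S T_n^\vee T_n S^\dagger$, which equals $1$ at $q = 1$ by Lemma \ref{lem: product of DAHA elements is the identity}. The eigenvalues of $A_1, A_2, A_3$ lie in the required pairs $\{k_0,-k_0^{-1}\}$, $\{u_0,-u_0^{-1}\}$, $\{u_n,-u_n^{-1}\}$ by the quadratic DAHA relations, which survive restriction to $V'$ and the conjugation in $\mathcal{H}$ defining $Z_3$ from $T_n^\vee$.

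The main obstacle is the eigenvalue \emph{multiplicities}, and in particular the three-eigenvalue structure $\{-k_n^{-1}, k_n t^{-2}, k_n t^{2n-2}\}$ of $A_4$ with multiplicities $(n, n-1, 1)$: this is subtle because $Z_4 = S T_n S^\dagger$ is not conjugate to $T_n$ in $\mathcal{H}$ (since $S^\dagger \neq S^{-1}$) and so does not satisfy a naive quadratic relation. I would resolve this using Proposition \ref{prop: character variety isomorphic to quiver variety} and Corollary \ref{cor: CM space isomorphic to quiver variety}: after verifying that $(A_1, A_2, A_3, A_4)$ furnishes a representation of the multiplicative preprojective algebra $\Lambda^{\mathbf{q}}$ of the framed quiver $Q^\infty$ from Figure \ref{fig: CM quiver} with parameters $(\mathbf{q}, \mathbf{n})$ as in \eqref{eq: our quiver data}, the rank conditions imposed by the dimension vector $\mathbf{n}$ (in particular $n_\infty = 1$) pin down the correct conjugacy class $[\Lambda_i]$ for each $A_i$.
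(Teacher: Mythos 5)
The paper does not actually spell out a proof of this Proposition; it cites \cite[Proposition 5.2.10]{etingofganoblomkov} and remarks only that the argument there adapts with the minor change that the $Z_i$ here differ from the $\widetilde U_i$ of \textit{op.\ cit.}\ by conjugation by $S^\dagger$. So the comparison is between your plan and the argument one would have to supply.

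Your part (i) is on target: the commutations $[Z_i, T_j] = 0$ for $j = 2,\dots,n$ do follow from the braid and Hecke relations along the lines you sketch, and this is essentially the same computation that appears in \cite{etingofganoblomkov}.

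Part (ii) contains a genuine gap in the multiplicity argument. You correctly isolate the crux: $Z_4 = ST_nS^\dagger$ is not a Hecke-conjugate of $T_n$ (since $S^\dagger \neq S^{-1}$), so it satisfies no naive quadratic relation, and one must still pin down the eigenvalue multiplicities of $A_4$ (and, incidentally, the equal multiplicities $n,n$ of $A_1, A_2, A_3$, which you gloss over). But the proposed fix is circular. The Crawley--Boevey--Shaw passage from a Deligne--Simpson solution $(A_1,\dots,A_4)$ to a $\Lambda^{\mathbf q}$-module builds the leg spaces as $V_{[i,j]} = \image\big((A_i - \lambda_{i1})\cdots(A_i - \lambda_{ij})\big)$; their dimensions $n_{[i,j]}$ are therefore computed \textit{from} the conjugacy classes. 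You cannot first claim the dimension vector is $\mathbf n$ as in \eqref{eq: our quiver data} and then deduce the conjugacy classes from it, because you need the conjugacy classes to know the dimension vector. Proposition \ref{prop: character variety isomorphic to quiver variety} and Corollary \ref{cor: CM space isomorphic to quiver variety} identify two spaces once the class data are fixed on both sides; they do not furnish the class of $A_4$. What actually closes the gap: (a) note that $e_\tau V' = e_\tau V$ is one-dimensional because $V|_{H_n}$ is the regular representation; (b) establish a deformed quadratic relation for $Z_4$, of exactly the form the paper later records in Lemma \ref{lem: Z4 relation}, namely that $(tZ_4 - t^{-1}Z_4^{-1} - k_nt^{-1} + k_n^{-1}t)e_\tau'$ is a scalar multiple of $e_\tau$; on $V'$ this exhibits $tA_4 - t^{-1}A_4^{-1}$ as a rank-one perturbation of a scalar matrix, giving the ``$2n-1$ versus $1$'' eigenvalue split; and (c) a determinant-plus-genericity argument (compare the paper's proof of Proposition \ref{prop: alternative form}) to force the multiplicities $(n,n)$ for $A_1, A_2, A_3$ and to select the correct eigenvalues in \eqref{eq: our eigendata} for $A_4$. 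Without (b) and (c) the map is not shown to land in $\mathcal{M}_n$.
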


\begin{definition}\label{def: EGO map}
The map $\Phi$ from Proposition \ref{prop: EGO map}(ii) is henceforth called the \defn{EGO map}.
\end{definition}

\begin{remark}
Our definition of $\Phi$ is slightly different from the one in \cite{etingofganoblomkov}. In \textit{op. cit}, the authors use the subalgebras $H$ and $H' \subseteq H$ generated respectively by $ST_nS^{-1}, T_1, \dots, T_{n-1}$ and $ST_nS^{-1}, T_1, \dots, T_{n-2}$. These are analogous to the above $H_n$ and $H_n'$ (in fact, their $H$ can also be generated by $T_1, \dots, T_n$, so it is exactly $H_n$). Their $H'$ is used to define a subspace $V' \subseteq V$ in a similar way. Instead of the above elements $Z_i$ \eqref{eq: Zi}, they consider 
\begin{equation*}
\widetilde{U}_i \coloneqq S^\dagger Z_i(S^\dagger)\inv.
\end{equation*}
Their definition of the map $\Phi$ then uses the restriction of these $\widetilde{U}_i$ onto $V'$. But the proof of \cite[Proposition 5.2.10]{etingofganoblomkov} is easily adaptable to our setting. 
\end{remark}

\subsection{From DAHA to Matrices}\label{subsec: coordinate chart}

Our next task is to make the EGO map more explicit. It will be convenient to use a realisation of $\mathcal{H}_{q,\bm{\tau}}$ by operator-valued matrices as in \cite{chalykh}. At the classical level $q = 1$, this provides us with a coordinate chart on the Calogero--Moser space $\mathcal{M}_n$. We begin by considering the vector space
\begin{equation}\label{eq: M}
M \coloneqq \mathbb{C}W \otimes \mathscr{D}_q.
\end{equation}
We identify $M \cong \mathscr{D}_q \rtimes \mathbb{C}W$, writing its elements as
\begin{equation}\label{eq: element of M}
f = \sum_{w \in W}wf_w, \qquad f_w \in \mathscr{D}_q.
\end{equation}
The action of $\mathscr{D}_q \rtimes \mathbb{C}W$ on itself, by left multiplication, provides us with a faithful representation
\begin{equation}\label{eq: matrix representation}
\pi : \mathscr{D}_q \rtimes \mathbb{C}W \to \Mat_{\abs{W} \times \abs{W}}(\mathscr{D}_q).
\end{equation}
Identifying $\mathcal{H}_{q,\bm{\tau}}$ with its image under the Basic Representation, \eqref{eq: matrix representation} implies a representation
\begin{equation*}
\mathcal{H}_{q,\bm{\tau}} \hookrightarrow \Mat_{\abs{W} \times \abs{W}}(\mathscr{D}_q).
\end{equation*}

Let $W'$ be the subgroup of $W$ generated by $\s_i$ and $\s_{ij}$ with $i,j \in \{2,\dots,n\}$. Associated to $W'$ is a subspace of $W'$-invariants, namely
\begin{equation}\label{eq: M' and e'}
M' = e'M, \qquad e' \coloneqq \frac{1}{\abs{W'}}\sum_{w \in W'}w.
\end{equation}
We choose $\{\s_{1i},\s_{1i}^+\}$ for $i=1,\dots,n$ as the coset representatives in $W'\backslash W$, where $\s_{11} \coloneqq \id$ and $\s_{11}^+ \coloneqq \s_1$. Each element $f \in M'$ admits a unique presentation of the form
\begin{equation}\label{eq: element of M'}
f = e'\left(\sum_{i=1}^n\s_{1i}f_i + \sum_{i=1}^n\s_{1i}^+f_i^+\right), \qquad f_i, f_i^+ \in \mathscr{D}_q.
\end{equation}
Any element of $\mathscr{D}_q\rtimes \mathbb C W$ preserving $M'$ can, therefore, be represented by a $\mathscr{D}_q$-valued matrix of size $2n \times 2n$. For example, the action of $X_1$ and $P_1$ on $M'$ are given respectively by the matrices
\begin{equation}\label{eq: matrices of X1 and P1}
X \coloneqq \diag(X_1,\dots,X_n,X_1\inv,\dots,X_n\inv), \qquad P \coloneqq \diag(P_1,\dots,P_n,P_1\inv,\dots,P_n\inv).
\end{equation}
Likewise, the element $\s_1$ as defined in \eqref{eq: sign reversal elements} acts on $M'$ by the matrix
\begin{equation}\label{eq: matrix of s1}
\begin{pmatrix}0 & \Id_n\\\Id_n & 0\end{pmatrix}.
\end{equation}

\begin{lemma}\label{lem: Ai preserve M'}
Let $Z$ denote any of the elements $Z_1$, $Z_2$, $Z_3$, $Z_4$ in \eqref{eq: Zi}. Then, $Z$ preserve $M'$. 
\end{lemma}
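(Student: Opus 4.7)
The plan is to identify $M' = e'M$ as a joint eigenspace for the left action of $\beta(T_2), \ldots, \beta(T_n)$ on $M$, and then exploit the algebra-level commutation $[Z_i, T_j] = 0$ in $\mathcal{H}$ for $j = 2, \ldots, n$ provided by Proposition \ref{prop: EGO map}(i).

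The first step is to show
\[
M' = \{f \in M : \beta(T_j) f = \tau_j f \text{ for all } j = 2, \ldots, n\}.
\]
Using the formula $\beta(T_j) = \tau_j + c_j(\bm X)(s_j - 1)$ from Proposition \ref{prop: Basic Representation} and that $s_j \in W'$ for $j \geq 2$, we have $s_j e' = e'$, and hence $\beta(T_j) e' = \tau_j e'$. Every $f \in e'M$ is therefore a $\tau_j$-eigenvector. Conversely, $\beta(T_j) f = \tau_j f$ means $c_j(\bm X)(s_j - 1) f = 0$, and since $c_j(\bm X) \in \mathbb{C}_q(\bm X) \subseteq \mathscr{D}_q$ is a non-zero rational function and hence invertible, $s_j f = f$. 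Requiring this for all $j \in \{2, \ldots, n\}$ makes $f$ left-$W'$-invariant, so $f \in M'$.

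Next I would confirm that the commutation $[Z_i, T_j] = 0$ from Proposition \ref{prop: EGO map}(i) is an identity in $\mathcal{H}$ (not merely on specific representations), so that it transports via the injective $\beta$ to $\mathscr{D}_q \rtimes \mathbb{C}W$. For $Z_1 = T_0$ and $Z_2 = T_0^\vee = q^{-1/2} T_0^{-1} X_1$ this is immediate from Definition \ref{def: DAHA of rank n}(iv) together with $[T_j, X_1] = 0$ for $j = 2, \ldots, n$ (verifiable directly in the Basic Representation). For $Z_4 = S T_n S^\dagger$, one rewrites $Z_4 = Y_1 T_0^{-1}$ in terms of the Cherednik element $Y_1$; duality (Theorem \ref{thrm: duality isomorphism}) transposes the commutations $[X_1, T_j] = 0$ for $j \geq 2$ to $[Y_1, T_j] = 0$, so $Z_4$ commutes with $T_2, \ldots, T_n$. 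Finally, Lemma \ref{lem: product of DAHA elements is the identity} together with $Z_1 Z_2 = q^{-1/2} X_1$ yields $Z_3 Z_4 = X_1^{-1}$, so $Z_3 = X_1^{-1} Z_4^{-1}$ also commutes with $T_2, \ldots, T_n$.

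Combining the two ingredients, for $f \in M'$ and $j \in \{2, \ldots, n\}$,
\[
\beta(T_j)\bigl(\beta(Z_i) f\bigr) = \beta(Z_i)\beta(T_j) f = \tau_j\,\beta(Z_i) f,
\]
placing $\beta(Z_i) f$ in the joint eigenspace $M'$. The only mildly delicate step is the commutation of $Z_3$ (or equivalently $Z_4$) with $T_n$; the duality reduction to the Cherednik element $Y_1$ handles this uniformly.
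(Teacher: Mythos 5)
Your proof is correct and follows essentially the same line as the paper's: both characterise $M'$ by the condition $(s_j - 1)f = 0$ (equivalently $\beta(T_j)f = \tau_j f$) for $j = 2,\dots,n$ and then invoke the commutation $[Z_i, T_j] = 0$ in $\mathcal{H}$. You go somewhat further than the paper by explicitly verifying those commutations from the DAHA relations (via $Z_4 = Y_1 T_0^{-1}$, duality, and $Z_3 = X_1^{-1}Z_4^{-1}$), whereas the paper simply asserts them; this is a welcome extra detail rather than a different strategy.
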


\begin{proof}
This is by a standard argument. The Basic Representation implies $T_i - \tau_i = c_i(\bm{X})(s_i - 1)$ for the relevant parameter $\tau_i$, with the rational function $c_i(\bm{X})$ either \eqref{eq: ci(X)} and \eqref{eq: cn(X)}. Notice that each $Z$ commutes with the generators $T_2,\dots,T_n$ of the Hecke subalgebra $H_n'$. Consequently, for $i = 2,\dots,n$, we have
\begin{equation*}
(s_i - 1)Z = \frac{1}{c_i(\bm{X})}(T_i - \tau_i)Z = \frac{1}{c_i(\bm{X})}Z(T_i - \tau_i) = \frac{1}{c_i(\bm{X})}Zc_i(\bm{X})(s_i - 1).
\end{equation*}
Therefore, if $f \in M$ is $W'$-invariant, then so too is $Zf$. In other words, $Z(M') \subseteq M'$.
\end{proof}

Denote by $A_i$ the $\mathscr{D}_q$-valued matrix of size $2n \times 2n$ representing the action of $Z_i$ on $M'$. To state each of them explicitly, we need some notation. Let's begin with two functions of one variable
\begin{equation}\label{eq: a(z) and b(z)}
a(z) = \frac{t\inv - tz}{1 - z}, \qquad b(z) = t - a(z) = \frac{\overline{t}}{1 - z}
\end{equation}
as well as their following two-variable counterparts, where $1 \leq i \neq j \leq n$:
\begin{alignat}{3}
a_{ij} &= a(X_iX_j\inv), \qquad& a_{ij}^+ &= a(X_iX_j), \qquad& a_{ij}^- &= a(X_i\inv X_j\inv),\label{eq: aij}\\
b_{ij} &= b(X_iX_j\inv), \qquad& b_{ij}^+ &= b(X_iX_j), \qquad& b_{ij}^- &= b(X_i\inv X_j\inv).\label{eq: bij}
\end{alignat}

\begin{notation}\label{not: extended indices}
For $i \in \{1,\dots,n\}$, it is convenient to extend such indices to $\{1,\dots,2n\}$ by way of
\begin{equation*}
X_{i+n} \coloneqq X_i\inv, \qquad P_{i+n} \coloneqq P_i\inv.
\end{equation*}
\end{notation}

We also require the following rational functions of one variable:
\begin{align}
u(z) &= k_n^{-1}\frac{(1 - k_nu_nz)(1 + k_nu_n\inv z)}{1 - z^2}, & v(z) &= k_n-u(z)=\frac{\overline{k}_n + \overline{u}_nz}{1 - z^2},\label{eq: u(z) and v(z)}\\
\widetilde{u}(z) &= k_0^{-1}\frac{(1 - q^{1/2}k_0u_0z\inv)(1+q^{1/2}k_0u_0\inv z\inv)}{1 - qz^{-2}}, & \widetilde{v}(z) &= k_0 - \widetilde{u}(z) = \frac{\overline{k}_0 + q^{1/2}\overline{u}_0z\inv}{1 - qz^{-2}}.\label{eq: utilde(z) and vtilde(z)}
\end{align}

Notice that $u$ and $\widetilde{u}$ are respective generalisations of the rational functions $c_i(\bm{X})$ defined in \eqref{eq: cn(X)} and \eqref{eq: c0(X)}. Also, $v$ and $\widetilde{v}$ are related to the Basic Representation, cf. \cite[(4.2.3)]{macdonald}.

We use the notation $u_i \coloneqq u(X_i)$ and $u_i^- \coloneqq (X_i\inv)$, and analogously for the others $v$, $\widetilde{u}$ and $\widetilde{v}$. 

\begin{proposition}\label{prop: matrices of Ai}
Let $A_i$ be the $\mathscr{D}_q$-valued matrix of size $2n \times 2n$ representing the action of $Z_i$ on $M'$. Then, each of them is given by the following respective explicit form:
\begin{equation*}
(A_1)_{ij} = \begin{dcases}
\widetilde{v}_i &\text{if $i = j$}\\
\widetilde{u}_iP_i\inv &\text{if $i - j = \pm n$}\\
0 &\text{otherwise}
\end{dcases},\qquad
(A_2)_{ij} = \begin{dcases}
q^{-1/2}(\widetilde{v}_i-\overline{k}_0)X_i &\text{if $i = j$}\\
q^{-1/2}\widetilde{u}_iP_i\inv X_i\inv &\text{if $i - j = \pm n$}\\
0 &\text{otherwise}
\end{dcases},
\end{equation*}
\begin{equation*}
(A_3)_{ij} =
\begin{dcases}
X_i\inv u_j^-\dprod_{k=1}^{2n}a_{ik}^- &\text{if $i - j = \pm n$}\\
-X_i\inv u_j^-b_{ij}^-a_{ij}\dprod_{k=1}^{2n}a_{jk}^- &\text{if $i - j \neq 0, \pm n$}\\
X_i\inv k_n\inv t^{2-2n} - \sum_{k \neq i}(A_3)_{ik} &\text{if $i = j$}
\end{dcases},
\end{equation*}
\begin{equation*}
(A_4)_{ij} =
\begin{dcases}
u_j^-\dprod_{k=1}^{2n}a_{ik}^-, &\text{if $i - j = \pm n$}\\
u_j^-b_{ij}^+a_{ij}\dprod_{k=1}^{2n}a_{jk}^-, &\text{if $i - j \neq 0, \pm n$}\\
k_n t^{2n-2} - \sum_{k \neq i}(A_4)_{ik}, &\text{if $i = j$}
\end{dcases}.
\end{equation*}
Here, the indices run from $1$ to $2n$, with the convention established in {\normalfont Notation \ref{not: extended indices}}, and the symbol $\dprod\!$ means a product excluding the values of $k$ for which $k - i = 0, \pm n$ and $k - j = 0, \pm n$.
\end{proposition}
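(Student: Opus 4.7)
The plan is to compute each $Z_i$ acting on the basis $\{e_j\coloneqq e'\s_{1j},\ e_{n+j}\coloneqq e'\s_{1j}^+\}_{j=1}^{n}$ of $M'$ directly, using the Basic Representation to realize each $Z_i$ as an explicit element of $\mathscr{D}_q\rtimes\mathbb{C}W$, and then reading off the matrix entries from the expansion $Z_i\cdot e_j=\sum_k e_k(A_i)_{kj}$. The core manipulations are: the semidirect-product rule $fw=w(w^{-1}\cdot f)$ for $f\in\mathscr{D}_q$ and $w\in W$; the $W'$-invariance of $X_1$ and $P_1$ (so $\tilde u_1,\tilde v_1$ slide freely past $e'$); and the coset identities $e'\s_1\s_{1j}=e'\s_{1j}^+$ and $e'\s_1\s_{1j}^+=e'\s_{1j}$, which follow from the Weyl-group identity $\s_1\s_{1j}=\s_j\s_{1j}^+$ combined with $\s_j\in W'$ for $j\geq 2$ (and tautologically at $j=1$ since $\s_{11}=e$ and $\s_{11}^+=\s_1$).

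For $Z_1=T_0$, the Basic Representation yields $T_0=\tilde v_1+\tilde u_1 P_1^{-1}\s_1$. Applied to $e'\s_{1j}$, the $\tilde v_1$-summand produces the diagonal entry $(A_1)_{jj}=\tilde v_j$, while the $\tilde u_1P_1^{-1}\s_1$-summand, via the coset identity together with $(\s_{1j}^+)^{-1}\cdot X_1=X_j^{-1}$, produces the entry $(A_1)_{n+j,j}=\tilde u_j^- P_j=\tilde u_{n+j}P_{n+j}^{-1}$; the analogous computation on $e'\s_{1j}^+$ gives $(A_1)_{j,n+j}=\tilde u_jP_j^{-1}$ and $(A_1)_{n+j,n+j}=\tilde v_j^-$, exhausting $A_1$. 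The case $Z_2=T_0^\vee=q^{-1/2}T_0^{-1}X_1=q^{-1/2}(\tilde v_1-\bar k_0)X_1+q^{1/2}X_1^{-1}\tilde u_1P_1^{-1}\s_1$ is handled identically, with $\s_1X_1=X_1^{-1}\s_1$ and $P_1^{-1}X_1^{-1}=qX_1^{-1}P_1^{-1}$ producing the claimed $q$ and $X_j$ factors.

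The cases $Z_3=ST_n^\vee S^{-1}$ and $Z_4=ST_nS^\dagger$ require unwinding the products $S=T_1\cdots T_{n-1}$ and $S^\dagger=T_{n-1}\cdots T_1$. The strategy is to expand each $T_i$ via $T_i=\tau_i+c_i(\bm X)(s_i-1)$, track the finite sum of Weyl-group elements with their $c_i$-coefficient products, and then apply to the basis of $M'$. A key simplification is that $s_i\in W'$ for $i\geq 2$ gives $T_ie'=te'$, which collapses many terms; however $s_1=\s_{12}\notin W'$, so intermediate stages (e.g.\ $T_1\cdot e'$) generically leave $M'$, and the computation is best organised by first fully expanding $Z_3,Z_4$ in $\mathscr{D}_q\rtimes\mathbb{C}W$ before applying. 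The resulting products of $c_i(\bm X)$ combine to yield the two-variable factors $a_{ij},b_{ij},a_{ij}^\pm,b_{ij}^\pm$ of \eqref{eq: a(z) and b(z)}--\eqref{eq: bij}, and the $\dprod$-style products (with omitted indices) reflect cancellations among $c_i$-factors at the specific Weyl-group elements reached.

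The diagonal entries of $A_3,A_4$ are most efficiently derived from the row-sum identities in the proposition, which can be proved independently of the off-diagonals. The key observation is that $\sum_k e_k=\tfrac{1}{|W'|}\sum_{w\in W}w\in M'$ is annihilated by each $(s_i-1)$ (since left multiplication by any $s_i\in W$ permutes $W$), so $T_i\cdot\sum_k e_k=\tau_i\sum_k e_k$ for all $i=1,\dots,n$; hence $\sum_k e_k$ is $\chi$-isotypic for the one-dimensional $H_n$-representation $T_i\mapsto\tau_i$. Consequently $Z_4\cdot\sum_k e_k=\chi(Z_4)\sum_k e_k=k_nt^{2n-2}\sum_k e_k$, giving $\sum_j(A_4)_{ij}=k_nt^{2n-2}$. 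For $Z_3=SX_n^{-1}T_n^{-1}S^{-1}$, the same invariance lets $S^{-1}$ and $T_n^{-1}$ act as scalars, while the non-scalar $X_n^{-1}$ is moved past the outer $S$ via the identity $SX_n^{-1}=X_1^{-1}(S^\dagger)^{-1}$ (a consequence of $S^\dagger X_1S=X_n$), yielding $Z_3\cdot\sum_k e_k=k_n^{-1}t^{2-2n}X_1^{-1}\sum_k e_k$ and hence $\sum_j(A_3)_{ij}=k_n^{-1}t^{2-2n}X_i^{-1}$ (using that $X_1^{-1}$ acts diagonally on the $M'$-basis as $\diag(X_k^{-1})$ in extended indices). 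The principal obstacle is thus the combinatorial bookkeeping in the off-diagonal expansions of $Z_3,Z_4$: precisely tracking which Weyl-group elements survive and the resulting product structure of the $c_i$-coefficients is direct but technically demanding, with the shape of the answer fully dictated by the Weyl-group combinatorics underlying $S\s_nS^{-1}=\s_1$.
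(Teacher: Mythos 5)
Your overall strategy—realise each $Z_i$ via the Basic Representation, act by left multiplication on the basis $\{e'\s_{1j},\,e'\s_{1j}^+\}$ of $M'$, and read off the $2n\times 2n$ matrix—is exactly the paper's, and your treatment of $A_1$ and $A_2$ matches the paper's one-line argument. Where you diverge is in the handling of $A_3$ and $A_4$. The paper does not expand $S,S^\dagger$ by hand: it cites \cite[Proposition 4.3]{chalykh} for the full computation of $A_4$ (in the form $A_4=\mathscr{P}\s_1$), and then gets $A_3$ essentially for free via the identity $Z_3=X_1^{-1}Z_4^{-1}$ (a consequence of Lemma \ref{lem: product of DAHA elements is the identity}, since $Z_1Z_2=q^{-1/2}X_1$), which reduces $A_3$ to $X^{-1}A_4^{-1}$ with $A_4^{-1}$ computed the same way as $A_4$. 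Your plan of expanding $Z_3=ST_n^\vee S^{-1}$ directly would work, but it duplicates effort you could avoid with that relation, and you leave the off-diagonal combinatorics of both $Z_3$ and $Z_4$ as a "direct but technically demanding" sketch. That is the actual content of the proposition, so your plan has a substantial deferred step; the paper defers it too, but at least points to where the computation is written out in full.

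On the other hand, your derivation of the diagonal entries via row sums is a genuine and elegant addition that the paper's proof does not spell out: observing that $\sum_k e_k=\frac{1}{|W'|}\sum_{w\in W}w$ is annihilated by $(s_i-1)$ for $i=1,\dots,n$, hence is $\chi$-isotypic for $H_n$, immediately gives $\sum_j(A_4)_{ij}=\chi(Z_4)=k_nt^{2n-2}$, and your use of $SX_n^{-1}=X_1^{-1}(S^\dagger)^{-1}$ to pull the non-scalar $X_n^{-1}$ out of $Z_3$ cleanly gives $\sum_j(A_3)_{ij}=k_n^{-1}t^{2-2n}X_i^{-1}$. This explains conceptually why the diagonal is forced once the off-diagonals are known, independent of any explicit evaluation of the sum. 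One small caution: you appeal to "$S\s_nS^{-1}=\s_1$" as the Weyl-group shadow of $Z_4$; this is correct at the level of the underlying reflections, but when writing the actual expansion you must keep both $S=T_1\cdots T_{n-1}$ and $S^\dagger=T_{n-1}\cdots T_1$ as Hecke elements (they are inverses only at $t=1$), and the $c_i$-coefficient bookkeeping on both sides of $T_n$ is where the $\dprod$-structure with its index exclusions comes from. If you want to complete the argument without the external citation, computing $A_4$ and $A_4^{-1}$ and then deducing $A_3=X^{-1}A_4^{-1}$ is the most economical route.
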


\begin{proof}
For $A_1$ and $A_2$, this is straightforward. For example, $T_0 = \widetilde{v}(X_1) + \widetilde{u}(X_1)P_1\inv\s_1$, so its action on $M'$ is easily found from \eqref{eq: matrices of X1 and P1} and \eqref{eq: matrix of s1}. The calculation of $A_4$ can be found in \cite{chalykh}. More precisely, $A_4 = \mathscr{P}\s_1$, where $\mathscr{P}$ is the matrix given in \cite[Proposition 4.3]{chalykh}. For $A_3$, one can use that $Z_3 = X_1\inv Z_4\inv$, so it is sufficient to calculate the action of $Z_4\inv$, which is entirely similar to the way $A_4$ was found in \cite{chalykh}. The result here is 
\begin{equation*}
(A_4\inv)_{ij} =\begin{dcases}
u_j^-\dprod_{k=1}^{2n}a_{ik}^- &\text{if $i - j = \pm n$}\\
-u_j^-b_{ij}^-a_{ij}\dprod_{k=1}^{2n}a_{jk}^- &\text{if $i - j \neq 0, \pm n$}\\
k_n\inv t^{2-2n} - \sum_{k \neq i}(A_4\inv)_{ik} &\text{if $i = j$}
\end{dcases},
\end{equation*}
from which the formula for $A_3$ follows.
\end{proof}

By construction, $A_1$, $A_2$ and $A_3$ satisfy the respective quadratic relations of $T_0$, $T_0^\vee$ and $T_n^\vee$, i.e.
\begin{equation}\label{eq: Hecke relations for A1, A2, A3}
A_1-A_1\inv = \overline{k}_0\Id_{2n}, \qquad A_2-A_2\inv = \overline{u}_0\Id_{2n}, \qquad A_3-A_3\inv = \overline{u}_n\Id_{2n},
\end{equation}
as well as $q^{1/2}A_1A_2A_3A_4 = \Id_{2n}$. For $A_4$, the Hecke relation is deformed as explained below.

First, we have associated to $W'$ the finite Hecke subalgebra $H_n' \subseteq H_n$ generated by $T_2,\dots,T_n$. Let $e_\tau'$ denote the analogue of the Hecke symmetriser $e_\tau$ \eqref{eq: Hecke symmetriser} for this subalgebra, that is 
\begin{equation}\label{eq: Hecke subsymmetriser}
e_\tau' \coloneqq \frac{1}{\sum_{w\in W'}\tau_w^2}\sum_{w \in W'}\tau_wT_w.
\end{equation}
By \cite[(5.5.14), (5.5.15)]{macdonald}, we have $e_\tau = e\mathbf{c}$ and $e_\tau' = e'\mathbf{c}'$ for suitable functions $\mathbf{c},\mathbf{c}'$. Hence, $e'M = e_\tau' M$ and so $e_\tau'$ acts on $M'$ by identity. The action of $e_\tau$ on $M'$ was calculated in \cite{chalykh}.

\begin{lemma}[{\cite[\S3.8 and \S4.4]{chalykh}}]\label{lem: action of Hecke symmetriser on M'}
Consider the column and row vectors $\mathbf{v}, \mathbf{w}$ given by
\begin{equation*}
\mathbf{v} = (1,\dots,1)^T, \qquad \mathbf{w} = (\phi_1, \dots, \phi_{2n}), \qquad 
\phi_i = u_i^-\prod_{k\neq i}^n a_{ki}a_{ki}^-.
\end{equation*}
The Hecke symmetriser $e_\tau$ acts on $M' = e'M$ by the rank-one matrix $\gamma\inv\mathbf{v}\mathbf{w}$, where
\begin{equation}\label{eq: gamma}
\gamma = \frac{t^{2n}-1}{t^2-1}k_n + \frac{1-t^{-2n}}{1-t^{-2}}k_n\inv.
\end{equation}
\end{lemma}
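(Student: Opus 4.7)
My plan is to identify the $\chi$-isotypic component of $M'$ (viewed as an $H_n$-module) as a $\mathscr{D}_q$-rank-one subspace spanned by an explicit vector $\mathbf{v}$, and then to compute $e_\tau|_{M'}$ as the associated rank-one projection. First I would verify that $e_\tau$ restricts to an operator $M' \to M'$. Since $e_\tau$ is the primitive idempotent of $H_n$ for the character $\chi : T_i \mapsto \tau_i$ and $\chi|_{H_n'} = \chi'$, using $T_w e_\tau = \tau_w e_\tau$ for all $w \in W'$ gives $e_\tau' \cdot e_\tau = e_\tau \cdot e_\tau' = e_\tau$. This yields $e_\tau M \subseteq e_\tau' M = M'$, so the restriction is well defined. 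Next I would exhibit $\mathbf{v}$ as a $\chi$-eigenvector: under the identification $M' \cong \mathscr{D}_q^{2n}$ in the coset-rep basis $\{e'\s_{1i}, e'\s_{1i}^+\}$, the vector $\mathbf{v} = (1, \ldots, 1)^\top$ corresponds to $(|W|/|W'|) \cdot e$, where $e = |W|^{-1}\sum_{w \in W} w \in \mathbb{C}W \subseteq M$ is the group symmetriser (via $W = \bigsqcup_\sigma W' \sigma$ with $\sigma \in \{\s_{1i}, \s_{1i}^+\}_{i=1}^n$). The Basic Representation formula $T_i = \tau_i + c_i(\bm X)(s_i - 1)$ then gives $T_i \cdot e = \tau_i e$ for $i = 1, \ldots, n$, since $s_i \in W$ fixes $e$ under left multiplication. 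Hence $e_\tau \mathbf{v} = \mathbf{v}$ and $\mathbf{v}$ lies in the image of $e_\tau|_{M'}$.

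Second, I would establish that this image is exactly $\mathscr{D}_q \mathbf{v}$. A Frobenius-reciprocity count for $M'$ as $H_n$-module (generically, over the fraction field of $\mathscr{D}_q$) gives $[M' : \chi]_{H_n} = [\Ind_{H_n'}^{H_n} \chi' : \chi]_{H_n} = [\chi' : \chi|_{H_n'}]_{H_n'} = 1$. Consequently $e_\tau|_{M'} = \gamma^{-1} \mathbf{v} \mathbf{w}$ for some $\mathscr{D}_q$-valued row-covector $\mathbf{w}$ and scalar $\gamma$. The idempotency $e_\tau^2 = e_\tau$ forces $\mathbf{w}(\mathbf{v}) = \gamma$, and the intertwiner relation $e_\tau T_i = \tau_i e_\tau$ translates into Hecke-covariance conditions on $\mathbf{w}$ that determine it uniquely up to scalar; in fact only the $i = 1$ relation is non-trivial, since for $i \geq 2$ the relation is automatic on $M'$.

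Finally I would extract the explicit formulae by direct calculation. Using the factorisation $e_\tau = (\sum_\sigma \tau_\sigma T_\sigma)(\sum_\sigma \tau_\sigma^2)^{-1} \cdot e_\tau'$ coming from the coset decomposition with shortest representatives $\sigma \in \{\s_{1i}, \s_{1i}^+\}$, and noting $e_\tau'|_{M'} = \mathrm{id}$, the operator $e_\tau|_{M'}$ reduces to a normalised sum of the $2n$ matrices $T_\sigma|_{M'}$. Evaluating this sum on a single basis vector (say $e'\s_{11}$) and reading off coefficients yields $\phi_i = u_i^- \prod_{k \neq i}^n a_{ki} a_{ki}^-$. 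The sum $\gamma = \mathbf{w}(\mathbf{v}) = \sum_i \phi_i$ then simplifies, via a partial-fractions / residue-type identity in the variables $X_1, \ldots, X_n$, to the closed form \eqref{eq: gamma}. The main technical obstacle is precisely this explicit matrix bookkeeping for each $T_\sigma|_{M'}$ — tracking how the $s_1$-action permutes the $2n$ coset-representative basis vectors, and accumulating the rational prefactors $c_i(\bm X)$ — which is the calculation carried out in detail in \cite{chalykh}.
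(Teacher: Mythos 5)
Your proposal is correct and follows essentially the same outline as the paper: establish that $e_\tau|_{M'}$ is a rank-one $\mathscr{D}_q$-valued matrix of the form $\gamma^{-1}\mathbf{v}\mathbf{w}$, read off $\gamma = \mathbf{w}\mathbf{v}$ from idempotence $e_\tau^2 = e_\tau$, and then evaluate this constant. The paper simply cites \cite[Proposition~3.3]{chalykh} for the rank-one structure and the explicit form of $\mathbf{w}$, whereas you re-derive the rank-one property from the fact that $e_\tau$ is a primitive idempotent (your Frobenius-reciprocity count, or more directly $e_\tau H_n = \mathbb{C}e_\tau$ so $e_\tau M$ has $\mathscr{D}_q$-rank one) together with the observation that the all-ones vector $\mathbf{v}$ represents the group symmetriser $e$ and hence is fixed by $e_\tau$; this is a nice supplement to the citation. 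The one substantive difference worth flagging is the final evaluation of $\gamma = \sum_i \phi_i$: you gesture at a ``partial-fractions / residue-type identity,'' which is a priori a delicate rational-function manipulation, whereas the paper uses the cleaner observation that since $\gamma$ is known to be a constant, it can be extracted simply by sending $X_i \to \infty$ along a Weyl chamber, where all but the leading terms of the $\phi_i$ collapse and the sum telescopes to the closed form \eqref{eq: gamma}. You would do well to replace the vague partial-fractions step with this limit argument.
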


\begin{proof}
Indeed, by the proof of \cite[Proposition 3.3]{chalykh}, the Hecke symmetriser $e_\tau$ acts on $M'$ by $\mathbf{v}\mathbf{w}$ up to a constant factor. Hence, $\mathbf{v}\mathbf{w} = \gamma e_\tau$ on $M'$ for some $\gamma \in \mathbb{C}^\ast$. To calculate this constant, we use the idempotence property $e_\tau^2 = e_\tau$, from which we see that $\gamma = \mathbf{w}\mathbf{v}$. To evaluate this constant, we can take the variables $X_i \to \infty$ within a particular Weyl chamber. This yields
\begin{equation}\label{eq: trace of vw}
\mathbf{w}\mathbf{v} = \frac{t^{2n}-1}{t^2-1}k_n + \frac{1-t^{-2n}}{1-t^{-2}}k_n\inv,
\end{equation}
as stated.
\end{proof}

\begin{lemma}\label{lem: Z4 relation}
In $\mathcal{H}_{q,\bm{\tau}}$, the element $Z_4 = ST_nS^\dagger$ satisfies the relation
\begin{equation}\label{eq: Z4 relation}
(tZ_4 - t\inv Z_4\inv - k_nt\inv + k_n\inv t)e_\tau' = (k_nt^{2n-1} - k_n\inv t^{1-2n} + k_n\inv t - k_nt\inv)e_\tau.
\end{equation}
\end{lemma}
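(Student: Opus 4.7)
The plan is to reduce the claim to a matrix identity on the subspace $M'$ using the operator-valued matrix realisation from Proposition \ref{prop: matrices of Ai}. First, since $e_\tau T_w = \tau_w e_\tau$ for all $w \in W'$, we have $e_\tau e_\tau' = e_\tau$, so both sides of the proposed identity are unchanged by right multiplication by $e_\tau'$. Thus by the faithfulness of the left action of $\mathcal{H}$ on $M$ (inherited from the Basic Representation and the regular action of $\mathscr{D}_1 \rtimes \mathbb{C}W$), it suffices to check that both sides agree as operators on $M' = e'M$.

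On $M'$ we use: (i) $e_\tau'$ acts as the identity, since $e_\tau' = e'\mathbf{c}'$ and $e'$ is the identity on $M'$; (ii) $Z_4$ and $Z_4\inv$ preserve $M'$ by Lemma \ref{lem: Ai preserve M'} and act by the matrices $A_4$ and $A_4\inv$ of Proposition \ref{prop: matrices of Ai}; (iii) $\gamma e_\tau$ acts by the rank-one matrix $\mathbf{v}\mathbf{w}$ by Lemma \ref{lem: action of Hecke symmetriser on M'}. A short calculation using $(t-t\inv)/(t^2-1) = t\inv$ and $(t-t\inv)/(1-t^{-2}) = t$ gives
\[
k_nt^{2n-1} - k_n\inv t^{1-2n} + k_n\inv t - k_nt\inv = (t - t\inv)\gamma,
\]
so the claim reduces to the matrix identity
\[
tA_4 - t\inv A_4\inv - (k_nt\inv - k_n\inv t)\Id_{2n} = (t - t\inv)\mathbf{v}\mathbf{w}
\]
of operators on $M'$.

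This matrix identity is verified entry-by-entry using the explicit formulas from Proposition \ref{prop: matrices of Ai} (and the displayed formula for $A_4\inv$ in its proof), together with $\phi_j = u_j^- \prod_{k\neq j}^n a_{kj}a_{kj}^-$ from Lemma \ref{lem: action of Hecke symmetriser on M'}. For $(i,j)$ with $i - j = \pm n$ the entries of $A_4$ and $A_4\inv$ literally coincide, giving an automatic $(t - t\inv)$-factor that one matches with $\phi_j$ by consolidating the $\dprod$-product over extended indices into the product defining $\phi_j$. For $i - j \neq 0, \pm n$, the two entries differ only by the exchange $b_{ij}^+ \leftrightarrow -b_{ij}^-$, and the combination $tb_{ij}^+ + t\inv b_{ij}^-$ collapses via $\overline t = t - t\inv$. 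Diagonal entries are then pinned down by the row-sum definitions of $(A_4)_{ii}$ and $(A_4\inv)_{ii}$.

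The predicted rank-one correction matches the relation \eqref{eq: YTX relation} in Proposition \ref{prop: alternative form}, providing a sanity check. The main obstacle is bookkeeping: reconciling the product symbol $\dprod$ over extended indices $\{1,\dots,2n\}$ (following Notation \ref{not: extended indices}) with the smaller product in the definition of $\phi_j$, while keeping track of the $X_i \leftrightarrow X_i\inv$ substitutions and the resulting relations among the $a$- and $b$-functions of \eqref{eq: a(z) and b(z)}--\eqref{eq: bij}.
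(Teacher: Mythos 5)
Your proof is correct and follows the same route as the paper: act on $M'$, use the explicit matrices $A_4$, $A_4^{-1}$ and Lemma \ref{lem: action of Hecke symmetriser on M'} to reduce to the matrix identity $tA_4 - t^{-1}A_4^{-1} - (k_nt^{-1}-k_n^{-1}t)\Id_{2n} = (t-t^{-1})\mathbf{v}\mathbf{w}$, and lift back to $\mathcal{H}_{q,\bm{\tau}}$ by faithfulness of the action on $M$. Your first paragraph actually spells out the point the paper leaves implicit (both sides are fixed under right multiplication by $e_\tau'$, so checking on $M'$ suffices for faithfulness), which is a welcome clarification.
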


\begin{proof}
Act by both sides on $M' = e'M = e_\tau'M$. The left-hand side of \eqref{eq: Z4 relation} is easily computed on $M'$ since we know $A_4$ and $A_4\inv$ explicitly. It is then straightforward to check that 
\begin{equation}\label{eq: A4 relation}
tA_4 - t\inv A_4\inv - (k_nt\inv - k_n\inv t)\Id_{2n} = (t - t\inv)\mathbf{v}\mathbf{w}.
\end{equation}
Since $\mathbf{v}\mathbf{w} = \gamma e_\tau$ on $M'$, we conclude that \eqref{eq: Z4 relation} is valid on $M'$ also. Because $\mathcal{H}_{q,\bm{\tau}}$ acts faithfully on $M$, the relation \eqref{eq: Z4 relation} then follows. 
\end{proof}

\begin{corollary}\label{cor: chart on CM space}
For $q = 1$, the quadruple $(A_1,A_2,A_3,A_4)$ from {\normalfont Proposition \ref{prop: matrices of Ai}} represents a point on the Calogero--Moser space. This gives a coordinate chart on $\mathcal{M}_n$ with $2n$ coordinates $X_i, P_i$.
\end{corollary}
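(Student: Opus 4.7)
The plan is to match the quadruple $(A_1, A_2, A_3, A_4)$ with data of the alternative form in Proposition~\ref{prop: alternative form} and then invoke that proposition. Specifically, I would set $T := A_2$, $X := A_1 A_2$, $Y := A_3^{-1} A_2^{-1}$, $v := \mathbf{v}$ and $w := \mathbf{w}$, with $\mathbf{v}, \mathbf{w}$ as in Lemma~\ref{lem: action of Hecke symmetriser on M'}. The identity $A_1 A_2 A_3 A_4 = \Id_{2n}$ at $q = 1$ (which comes from Lemma~\ref{lem: product of DAHA elements is the identity} applied to the DAHA actions of $T_0, T_0^\vee, S, T_n^\vee, T_n, S^\dagger$) then gives $X T^{-1} = A_1$, $TX^{-1} = A_1^{-1}$, $T^{-1} Y^{-1} = A_3$, $YT = A_3^{-1}$, $YTX^{-1} = A_4$ and $XT^{-1}Y^{-1} = A_4^{-1}$.

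With these identifications, relations \eqref{eq: T relation}, \eqref{eq: XT relation} and \eqref{eq: TY relation} of Proposition~\ref{prop: alternative form} reduce directly to the quadratic Hecke relations \eqref{eq: Hecke relations for A1, A2, A3} for $A_1, A_2, A_3$, which are inherited from the relations of $T_0$, $T_0^\vee$ and the $S$-conjugate of $T_n^\vee$ in the DAHA. Relation \eqref{eq: YTX relation} becomes precisely \eqref{eq: A4 relation} (established during the proof of Lemma~\ref{lem: Z4 relation}), and relation \eqref{eq: wv relation} is the evaluation $\mathbf{w} \mathbf{v} = \gamma$ computed in Lemma~\ref{lem: action of Hecke symmetriser on M'}. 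For generic $\bm{\tau}$, Proposition~\ref{prop: alternative form} then guarantees that $(A_1, A_2, A_3, A_4)$ represents a point of $\mathcal{M}_n$.

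For the chart claim, both the parameter space $(\mathbb{C}^\ast)^{2n}$ (with coordinates $X_i, P_i$) and $\mathcal{M}_n$ have dimension $2n$, with the latter smooth and irreducible by Theorem~\ref{thrm: CM space is smooth and irreducible}. Generic injectivity of the parametrisation can be read off directly: by construction $A_1 A_2 = X = \diag(X_1, \dots, X_n, X_1^{-1}, \dots, X_n^{-1})$, so the eigenvalues of $A_1 A_2$ recover the $X_i$ up to the $W$-action, and after fixing an ordering of the $X_i$, the off-diagonal $(i, i\pm n)$-entries of $A_1$, namely $\widetilde{u}_i P_i^{-1}$, recover the $P_i$. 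On the Zariski-open locus where the $X_i$'s and their inverses are pairwise distinct and the denominators in \eqref{eq: c0(X)}--\eqref{eq: cn(X)} do not vanish, this produces an injective algebraic map into $\mathcal{M}_n$; by dimension matching and irreducibility, its image is open dense, giving the claimed coordinate chart.

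The main obstacle is the deformed relation \eqref{eq: A4 relation} for $A_4$, since the Hecke-type structure breaks down into a rank-one correction tied to $\mathbf{v}\mathbf{w}$; this was already handled in Lemma~\ref{lem: action of Hecke symmetriser on M'} and Lemma~\ref{lem: Z4 relation}. Beyond this, the argument is essentially bookkeeping: matching identifications and invoking Propositions~\ref{prop: alternative form} and Theorem~\ref{thrm: CM space is smooth and irreducible}.
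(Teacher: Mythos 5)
Your proposal matches the paper's own proof: both set $X = A_1A_2$, $Y = A_4A_1 = A_3^{-1}A_2^{-1}$, $T = A_2$, take $v = \mathbf{v}$, $w = \mathbf{w}$, and reduce the alternative-form relations \eqref{eq: T relation}--\eqref{eq: wv relation} to the already-established identities \eqref{eq: Hecke relations for A1, A2, A3}, \eqref{eq: A4 relation} and \eqref{eq: trace of vw}, then invoke Proposition~\ref{prop: alternative form}. You supply more detail on the chart claim (generic injectivity via recovering $X_i$ from eigenvalues of $A_1A_2$ and $P_i$ from the off-diagonal entries of $A_1$, plus dimension-counting), which the paper only develops afterwards in Propositions~\ref{prop: Upsilon is injective on subset} and \ref{prop: Upsilon is isomorphic on a subset}, but the argument is the same.
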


\begin{proof}
Set $X = A_1A_2$, $Y = A_4A_1$ and $T = A_2$; so \eqref{eq: Hecke relations for A1, A2, A3}, \eqref{eq: trace of vw} and \eqref{eq: A4 relation} imply \eqref{eq: T relation}--\eqref{eq: wv relation}.
\end{proof}

One can also see that the action by $w \in W$ on $(\bm{P},\bm{X})$ is equivalent to conjugating $(A_1,A_2,A_3,A_4)$ by the permutation matrix representing $w$. Therefore, we have a well-defined map
\begin{equation}\label{eq: Upsilon}
\Upsilon : \mathcal{U}/W \to \mathcal{M}_n, \qquad (\bm{P},\bm{X}) \mapsto (A_1,A_2,A_3,A_4),
\end{equation}
where
\begin{equation}\label{eq: U}
\mathcal{U} \coloneqq (\mathbb{C}^\ast)^n \times ((\mathbb{C}^\ast)^n \setminus D), \qquad \text{with}\ D = \{\bm{X} \in (\mathbb{C}^\ast)^n : \delta(\bm{X}) = 0\}.
\end{equation}

\subsection{The EGO Map on a Chart}\label{subsec: local coordinates}

Let us explain how the above calculation is related to the EGO map. At the classical level $q = 1$, recall the Basic Representation gives an injective map $\beta : \mathcal{H} \to \mathbb{C}[\bm{X}^{\pm1},\bm{P}^{\pm1}]_{\delta(\bm{X})} \rtimes \mathbb{C}W$. Thus, one can construct a family of irreducible representations of $\mathcal{H}$ from irreducible representations of $\mathbb{C}[\bm{X}^{\pm1},\bm{P}^{\pm1}]_{\delta(\bm{X})} \rtimes \mathbb{C}W$ by restriction, cf. \cite[\S3.3]{oblomkovJun04}. To alleviate some of the notation, let $R \coloneqq \mathbb{C}[\bm{X}^{\pm1},\bm{P}^{\pm1}]_{\delta(\bm{X})}$ and $\widehat{R} \coloneqq R \rtimes \mathbb{C}W$. Now, pick a point $(\bm{\mu},\bm{\nu}) \in \mathcal{U}$ and define a one-dimensional representation $\chi_{\bm{\mu},\bm{\nu}}$ of $R$ by $\chi_{\bm{\mu},\bm{\nu}} : f(\bm{X},\bm{P})\mapsto f(\bm{\nu},\bm{\mu})$. From this, we can induce a finite-dimensional module:
\begin{equation}\label{eq: induced module V}
V_{\bm{\mu},\bm{\nu}} \coloneqq \Ind_R^{\widehat{R}}\chi_{\bm{\mu},\bm{\nu}} = \widehat{R} \otimes_{R} \chi_{\bm{\mu},\bm{\nu}}.
\end{equation}

\begin{proposition}\label{prop: induced module V is in Irrep'(H)}
Assume $\delta_{\bm{\tau}}(\bm{\nu}) \neq 0$. Then, viewed as an $\mathcal{H}$-module, $V_{\bm{\mu},\bm{\nu}}$ belongs to $\Irrep'(\mathcal{H})$ and its image under the EGO map is represented by the tuple $(A_1,A_2,A_3,A_4)$, where the $A_i$ are the matrices from {\normalfont Proposition \ref{prop: matrices of Ai}} with $q = 1$ under the specialisation $P_i = \mu_i$ and $X_i = \nu_i$. 
\end{proposition}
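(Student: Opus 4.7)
The plan is three-fold: establish that $V_{\bm{\mu},\bm{\nu}} \in \Irrep'(\mathcal{H})$, identify the subspace $V'_{\bm{\mu},\bm{\nu}}$ with a specialisation of $M'$ at $(\bm{\nu},\bm{\mu})$, and finally read off the EGO map from Proposition~\ref{prop: matrices of Ai}.

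For the first step, $V_{\bm{\mu},\bm{\nu}}$ has dimension $\lvert W\rvert = 2^n n!$ as a $\mathbb{C}W$-module, with basis $\{w\otimes 1\}_{w\in W}$, and is irreducible as a $\widehat{R} = R\rtimes \mathbb{C}W$-module because the $W$-stabiliser of $(\bm{\nu},\bm{\mu})\in \mathcal{U}$ is trivial (as $\bm{\nu}\notin D$ gives $\delta(\bm{\nu})\neq 0$). The hypothesis $\delta_{\bm{\tau}}(\bm{\nu})\neq 0$ then lets us view $V_{\bm{\mu},\bm{\nu}}$ as a module over $\mathbb{C}[\bm{X}^{\pm 1},\bm{P}^{\pm 1}]_{\delta(\bm{X})\delta_{\bm{\tau}}(\bm{X})}\rtimes \mathbb{C}W$, which by Corollary~\ref{cor: localisation of the DAHA by delta} is isomorphic to $\mathcal{H}_{\delta(\bm{X})\delta_{\bm{\tau}}(\bm{X})}$; so $V_{\bm{\mu},\bm{\nu}}$ remains irreducible as an $\mathcal{H}$-module. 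Since both $\delta$ and $\delta_{\bm{\tau}}$ are $W$-invariant, none of the Basic Representation coefficients $c_i(\bm{X})$ vanish anywhere on the $W$-orbit of $\bm{\nu}$. A standard induction on the length of $w\in W$, using $T_i-\tau_i = c_i(\bm{X})(s_i-1)$ to invert the map $w\mapsto T_w$, expresses each basis vector $w\otimes 1$ as an $H_n$-iterate of the cyclic vector $v = 1\otimes 1$; hence there is a surjection $H_n \twoheadrightarrow V_{\bm{\mu},\bm{\nu}}$ which is an isomorphism by dimension count. Thus $V_{\bm{\mu},\bm{\nu}}|_{H_n}$ is the regular representation and $V_{\bm{\mu},\bm{\nu}} \in \Irrep'(\mathcal{H})$.

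For the second step, the identity $e_\tau' = e'\mathbf{c}'$ recalled in the paragraph preceding Lemma~\ref{lem: action of Hecke symmetriser on M'} gives $e_\tau' V_{\bm{\mu},\bm{\nu}} = e' V_{\bm{\mu},\bm{\nu}}$ provided $\mathbf{c}'$ is invertible at $\bm{\nu}$, once more ensured by $\delta_{\bm{\tau}}(\bm{\nu})\neq 0$. This subspace is $2n$-dimensional with basis $\{e'\s_{1i}\otimes 1,\,e'\s_{1i}^+\otimes 1\}_{i=1}^n$ inherited from the coset representatives of $W'\backslash W$, matching the basis of $M'$ used in \eqref{eq: element of M'}.

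For the third step, at $q=1$ the algebra $\mathscr{D}_q$ is commutative in $\bm{X},\bm{P}$, so evaluation $X_i\mapsto \nu_i$, $P_i\mapsto \mu_i$ is a well-defined ring homomorphism under $\delta(\bm{\nu})\neq 0$. By construction, the matrices $A_i$ of Proposition~\ref{prop: matrices of Ai} represent $Z_i|_{M'}$ in the above basis, hence their specialisations $A_i(\bm{\nu},\bm{\mu})$ represent the action of $Z_i$ on $V'_{\bm{\mu},\bm{\nu}} \cong M'/\mathfrak{m}M'$, where $\mathfrak{m}$ is the maximal ideal at $(\bm{\nu},\bm{\mu})$. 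By Definition~\ref{def: EGO map}, this quadruple is precisely $\Phi(V_{\bm{\mu},\bm{\nu}})$. The main technical point is the compatibility of the idempotents $e'$ and $e_\tau'$ after specialisation and the matching of the two bases, both of which are controlled by the hypothesis $\delta_{\bm{\tau}}(\bm{\nu}) \neq 0$.
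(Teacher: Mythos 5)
Your proposal is correct and follows the paper's route closely: localise via Corollary~\ref{cor: localisation of the DAHA by delta} to get irreducibility over $\mathcal{H}$, identify $V'$ with $e'V_{\bm{\mu},\bm{\nu}}$ using invertibility controlled by $\delta(\bm{\nu})\delta_{\bm{\tau}}(\bm{\nu}) \neq 0$, and then obtain the matrices $A_i$ by specialising Proposition~\ref{prop: matrices of Ai} at $(\bm{\nu},\bm{\mu})$. The one place you genuinely diverge is in proving that $V_{\bm{\mu},\bm{\nu}}\vert_{H_n}$ is the regular representation. The paper notes $V_{\bm{\mu},\bm{\nu}} \cong \mathbb{C}W$ as a $W$-module by construction and then appeals to a deformation argument (rigidity under deformation of the semi-simple algebra $H_n$ for generic parameters). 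You instead argue directly that $1 \otimes 1$ is a cyclic vector: since the coefficient of $s_iw \otimes 1$ in $T_i(w\otimes 1)$ is a specialisation of $c_i$ at a $W$-translate of $\bm{\nu}$, which is nonzero because $\delta$ and $\delta_{\bm\tau}$ are $W$-invariant and nonvanishing at $\bm{\nu}$, induction on length gives a surjection $H_n \twoheadrightarrow V_{\bm{\mu},\bm{\nu}}$ that must be an isomorphism by dimension count. This is slightly more self-contained (it avoids invoking semi-simplicity/rigidity of $H_n$), at the cost of having to explicitly track the non-vanishing of the $c_i$'s. Both are sound; the rest of your argument -- including the identification $V' = e_\tau'V_{\bm{\mu},\bm{\nu}} = e'V_{\bm{\mu},\bm{\nu}}$ via $e_\tau' = e'\mathbf{c}'$, which is a mild variant of the paper's direct ``$c_i$ invertible $\Rightarrow (s_i-1)v=0$'' argument -- matches the paper.
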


\begin{proof}
Since $\delta(\bm{\nu})\delta_{\bm{\tau}}(\bm{\nu}) \neq 0$, one can use Corollary \ref{cor: localisation of the DAHA by delta} to view $V_{\bm{\mu},\bm{\nu}}$ as a module over
\begin{equation*}
\mathcal{H}_{\delta(\bm{X})\delta_{\bm{\tau}}(\bm{X})} \cong \mathbb{C}[\bm{X}^{\pm1},\bm{P}^{\pm1}]_{\delta(\bm{X})\delta_{\bm{\tau}}(\bm{X})} \rtimes \mathbb{C}W.
\end{equation*}
It is clearly irreducible as a $(\mathbb{C}[\bm{X}^{\pm1},\bm{P}^{\pm1}]_{\delta(\bm{X})\delta_{\bm{\tau}}(\bm{X})} \rtimes \mathbb{C}W)$-module, and thus also as an $\mathcal{H}$-module (cf. \cite[Proposition 3.3]{oblomkovJun04}). It is isomorphic to $\mathbb{C}W$ as a $W$-module, by construction. Hence, $V_{\bm{\mu},\bm{\nu}}$ is isomorphic to the regular representation as an $H_n$-module by a deformation argument (our genericity assumption from Definition \ref{def: generic paramteters} on the parameters implies that $H_n$ is semi-simple). This establishes that $V_{\bm{\mu},\bm{\nu}} \in \Irrep'(\mathcal{H})$.

To apply the EGO map to the module $V_{\bm{\mu},\bm{\nu}}$, one needs to consider the subspace $V'$ on which the Hecke subalgebra $H_n'$ acts according to the character $\chi'$, see \S\ref{subsec: EGO map}. This means that $(T_i - \tau_i)v = 0$ for $v \in V'$ and $i = 2,\dots,n$. We claim that $V' = e'V_{\mu,\nu}$, where $e'$ is the symmetriser \eqref{eq: M' and e'}. Indeed, $T_i-\tau_i=c_i(\mathbf{X})(s_i-1)$, where $c_i(\bm{X})$ is invertible on $V_{\bm{\mu},\bm{\nu}}$ due to the condition $\delta(\bm{\nu})\delta_{\bm{\tau}}(\bm{\nu}) \neq 0$. Hence, $(s_i - 1)v = 0$ for $v \in V'$ and $i = 2,\dots,n$. This establishes that $V' = e'V_{\bm{\mu},\bm{\nu}}$.

By definition, a basis of $V_{\bm{\mu},\bm{\nu}}$ is given by elements $w \otimes 1$ where $w \in W$ and with $1$ denoting a basis vector in $\chi_{\bm{\mu},\bm{\nu}}$. Elements of $V'$ can then be written similarly to elements \eqref{eq: element of M'} of $M'$, i.e.
\begin{equation}\label{eq: element of V'}
v = e'\left(\sum_{i=1}^n\s_{1i} \otimes f_i + \sum_{i=1}^n\s_{1i}^+ \otimes f_i^+\right), \qquad f_i, f_i^+ \in \mathbb{C}.
\end{equation}
Any element of $\mathbb{C}[\bm{X}^{\pm1},\bm{P}^{\pm1}]_{\delta(\bm{X})\delta_{\bm{\tau}}(\bm{X})} \rtimes \mathbb{C}W$ preserving $V'$ can be represented by a matrix of size $2n \times 2n$. For example, the action of $X_1$ and $P_1$ on $V'$ are given by the respective matrices
\begin{equation*}\label{eq: matrices of X1 and P1 on V'}
X \coloneqq \diag(\nu_1,\dots,\nu_n,\nu_1\inv,\dots,\nu_n\inv), \qquad P \coloneqq \diag(\mu_1,\dots,\mu_n,\mu_1\inv,\dots,\mu_n\inv).
\end{equation*}
Clearly, we are in the same setting as in \S\ref{subsec: coordinate chart}, with the only difference being that $X_i$ and $P_i$ are specialised to $\nu_i$ and $\mu_i$, respectively. Hence, the action of $Z_1$, $Z_2$, $Z_3$, $Z_4$ on $V'$ is found by specialising the formulae in Proposition \ref{prop: matrices of Ai}.
\end{proof}

\begin{remark}\label{rem: V isomorphic to induced module}
The $\mathcal{H}$ module $V_{\bm{\mu},\bm{\nu}}$ admits the following interpretation, cf. \cite[Lemma 6.1]{oblomkovJun04}. Assuming that $\delta(\bm{\nu})\delta_{\bm{\tau}}(\bm{\nu}) \neq 0$, one can consider a one-dimensional representation $\chi_{\bm{\mu},\bm{\nu}}$ of the (commutative) $W$-invariant algebra $\mathbb{C}[\bm{X}^{\pm1},\bm{P}^{\pm1}]_{\delta(\bm{X})\delta_{\bm{\tau}}(\bm{X})}^We_\tau$ defined by $f(\bm{X},\bm{P})e_\tau \mapsto f(\bm{\mu},\bm{\nu})$. We can further restrict $\chi_{\bm{\mu},\bm{\nu}}$ to the spherical subalgebra $e_\tau\mathcal{H}e_\tau$ using Corollary \ref{cor: localisation of the DAHA by delta}. Hence,
\begin{equation}\label{eq: V isomorphic to induced module}
V_{\bm{\mu},\bm{\nu}} \cong \mathcal{H}e_\tau \otimes_{e_\tau\mathcal{H}e_\tau}\chi_{\bm{\mu},\bm{\nu}}.
\end{equation}
\end{remark}

\section{Coordinates on the Character Variety}\label{sec: coordinates}

The map $\Upsilon$ \eqref{eq: Upsilon} defines coordinates on the Calogero--Moser space $\mathcal{M}_n$. More precisely, let
\begin{equation}\label{eq: U tau}
\mathcal{U}_{\bm{\tau}} \coloneqq (\mathbb{C}^\ast)^n\times ((\mathbb{C}^\ast)^n\setminus (D \cup D_{\bm{\tau}})), \qquad\text{with}\ D_{\bm{\tau}} = \{\bm{X} \in (\mathbb{C}^\ast)^n : \delta_{\bm{\tau}}(\bm{X}) = 0\}.
\end{equation}

\begin{proposition}\label{prop: Upsilon is injective on subset}
The map $\Upsilon$ is injective on the subset $\mathcal{U}_{\bm{\tau}}/W$.
\end{proposition}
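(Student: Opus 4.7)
The plan is to pin down the conjugating element $g \in \GL_{2n}(\mathbb{C})$ in two steps: first, the identity $A_1A_2 = X$ reduces $g$ to a monomial matrix; second, the $\bm{P}$-independence of $A_3$ reduces the remaining diagonal ambiguity to a scalar. Assume $\Upsilon(\bm{P}, \bm{X}) = \Upsilon(\bm{P}', \bm{X}')$. By the freeness of the $\GL_{2n}(\mathbb{C})$-action on $\mathcal{R}_n$ under genericity (Proposition \ref{prop: alternative form}), there exists $g \in \GL_{2n}(\mathbb{C})$ such that $g A_i g^{-1} = A_i'$ for $i = 1,\dots,4$, where $A_i = A_i(\bm{P}, \bm{X})$ and $A_i' = A_i(\bm{P}', \bm{X}')$ are the matrices of Proposition \ref{prop: matrices of Ai}.

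From Definition \ref{def: DAHA of rank n}(xi), $q^{1/2}T_0T_0^\vee = X_1$ in $\mathcal{H}_{q,\bm{\tau}}$, so at $q = 1$ one has $A_1A_2 = X := \diag(\nu_1,\dots,\nu_n,\nu_1^{-1},\dots,\nu_n^{-1})$ and similarly $A_1'A_2' = X'$. The condition $\delta(\bm{X}) \neq 0$ guarantees that $X$ has $2n$ pairwise distinct entries (and analogously for $X'$), so $gXg^{-1} = X'$ forces $g = D P_w$ with $D$ diagonal and $P_w$ a permutation matrix. The inversion pairing $\{\nu_i, \nu_i^{-1}\}$ is recovered from the diagonal multiset, and the non-collision between distinct pairs (again by $\delta(\bm{X}) \neq 0$) forces $P_w$ to preserve the pairing $\{i, i+n\}$ on $\{1,\dots,2n\}$. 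The pair-preserving permutations form precisely the subgroup of $S_{2n}$ isomorphic to $W = S_n \ltimes \mathbb{Z}_2^n$, so $P_w$ represents some $w \in W$. Invoking the $W$-equivariance of $\Upsilon$ recorded immediately before the proposition, we replace $(\bm{P}, \bm{X})$ by $w \cdot (\bm{P}, \bm{X})$; this reduces us to $g = D$ diagonal and $\bm{X} = \bm{X}'$.

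Inspection of Proposition \ref{prop: matrices of Ai} shows that $A_3$ depends only on $\bm{X}$ and not on $\bm{P}$. Hence, after $\bm{X} = \bm{X}'$, we have $A_3 = A_3'$, so $D$ commutes with $A_3$. The crucial step is the nonvanishing claim: under $\delta(\bm{X})\delta_{\bm{\tau}}(\bm{X}) \neq 0$, every off-diagonal entry $(A_3)_{ij}$ with $i \neq j$ is nonzero. Granted this, the only diagonal matrices commuting with $A_3$ are the scalars, so $D = \lambda \Id_{2n}$. Then $A_1 = A_1'$, and comparing the off-diagonal entries $(A_1)_{i,i+n} = \widetilde{u}(\nu_i)\mu_i^{-1}$, which are nonzero because $\delta_{\bm{\tau}}(\bm{X}) \neq 0$ excludes the zeros of $\widetilde{u}$, yields $\bm{P} = \bm{P}'$, completing the proof.

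The main obstacle is the nonvanishing claim for off-diagonal entries of $A_3$: one must check that each factor $u_j^-$, $b_{ij}^{\pm}$, $a_{ij}$, $a_{jk}^-$ appearing in the formulas for $(A_3)_{ij}$ (in both the case $i - j = \pm n$ and the case $i - j \neq 0, \pm n$) is nonzero under $\delta(\bm{X})\delta_{\bm{\tau}}(\bm{X}) \neq 0$. This amounts to a direct but somewhat tedious matching: for example, the zeros of $u(z)$ correspond to $1 \mp k_n u_n^{\pm 1} z = 0$ and are excluded by the factors $(1 - k_n u_n X_i^{\pm 1})$ and $(1 + k_n u_n^{-1} X_i^{\pm 1})$ in $\delta_{\bm{\tau}}$; zeros of $a_{ij} = a(X_iX_j^{-1})$ are excluded by $(1 - t^2 X_iX_j^{-1})$ in $\delta_{\bm{\tau}}$; zeros of $a_{jk}^- = a(X_j^{-1}X_k^{-1})$ are excluded by $(1 - t^2 X_i^{-1}X_j^{-1})$; and $b_{ij}^{\pm}$ never vanish. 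Once this bookkeeping is in place, the argument outlined above goes through essentially mechanically.
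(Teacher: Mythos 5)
Your proof is correct and follows essentially the same route as the paper's: reduce the conjugating element to a monomial matrix via $X = A_1A_2$, use the $W$-action to pass to $\bm{X} = \bm{X}'$ and $g$ diagonal, then exploit nonvanishing of off-diagonal entries of a $\bm{P}$-independent matrix to force $g$ scalar, and finally read off $\bm{P} = \bm{P}'$ from $A_1$. The only (cosmetic) difference is that you pin down the scalar using $A_3$, whereas the paper does so using $A_4$; both have identical off-diagonal structure and depend only on $\bm{X}$, so the argument is interchangeable. Your more explicit handling of the monomial-matrix and pair-preserving-permutation step is a helpful unpacking of what the paper compresses into ``up to the action of $W$, we may assume $\bm{X}=\bm{X}'$, which then forces $g$ to be diagonal.''
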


\begin{proof}
Let $\Upsilon(\bm{P},\bm{X}) = (A_1,A_2,A_3,A_4)$ and $\Upsilon(\bm{P}',\bm{X}') = (A_1',A_2',A_3',A_4')$ be such that
\begin{equation}\label{eq: conjugate CM point}
\Upsilon(\bm{P},\bm{X}) = \Upsilon(\bm{P}',\bm{X}') \qquad\text{and}\qquad (A_1,A_2,A_3,A_4) = g(A_1',A_2',A_3',A_4')g\inv ,
\end{equation}
for some $g \in \GL_{2n}(\mathbb{C})$. From the definitions of $A_1$ and $A_2$, the matrix $X = A_1A_2$ represents the action of $X_1$ on $M'$, so it has the diagonal form \eqref{eq: matrices of X1 and P1}. Therefore, up to the action by $W$, we may assume $\bm{X} = \bm{X}'$, which then forces $g$ to be diagonal and implies both $A_3 = A_3'$ and $A_4 = A_4'$. Combining this with the last entries of the tuples in \eqref{eq: conjugate CM point}, we have $gA_4g\inv = A_4$ (with $g$ diagonal). However, the condition $\delta_{\bm{\tau}}(\bm{X}) \neq 0$ implies that all the off-diagonal entries of $A_4$ are non-zero. This forces $g$ to be a multiple of identity. Consequently, $A_1 = A_1'$, from which $\bm{P} = \bm{P}'$ (this uses the fact that $\widetilde{u}_i = \widetilde{u}(X_i) \neq 0$ on $\mathcal{U}_0$).
\end{proof}

By Corollary \ref{cor: chart on CM space}, having an injective algebraic map $\Upsilon : \mathcal{U}_{\bm{\tau}}/W \to \mathcal{M}_n$, we would like to characterise its image. Both sides have dimension $2n$, so irreducibility of $\mathcal{M}_n$ tells us that the image of $\Upsilon$ is automatically dense in $\mathcal{M}_n$. Recall that at every point of $\mathcal{U}_0$, $X = A_1A_2$ is a diagonalisable matrix of the form \eqref{eq: matrices of X1 and P1}. In particular, its eigenvalues are paired-off into reciprocals. This fact is actually true globally.

\begin{lemma}\label{lem: eigenvalues of X are pairewise reciprocal}
For any $(A_1,A_2,A_3,A_4) \in \mathcal{M}_n$, $X = A_1A_2$ satisfies $\tr X^k =\tr X^{-k}$ for all $k \in \mathbb{Z}$, and its eigenvalues appear in pairs $(x_i,x_i\inv)$ for each $i=1,\dots,n$. Hence, there is a global map
\begin{equation*}
\mathcal{M}_n \to (\mathbb{C}^\ast)^n/W, \qquad (A_1,A_2,A_3,A_4) \mapsto (x_1,\dots,x_n).
\end{equation*}
\end{lemma}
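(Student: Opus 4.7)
The plan is to reduce the statement to its verification on the coordinate chart $\Upsilon(\mathcal{U}_{\bm{\tau}}/W)$, and then extend to all of $\mathcal{M}_n$ by irreducibility.

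On this chart, $A_1$ and $A_2$ represent, via the Basic Representation, the actions on $M'$ of the DAHA elements $Z_1 = T_0$ and $Z_2 = T_0^\vee$. Because $T_0 T_0^\vee = q^{-1/2} T_0 \cdot T_0^{-1} X_1 = q^{-1/2} X_1$, which at $q = 1$ equals $X_1$, the product $A_1 A_2$ must coincide with the matrix representing $X_1$ on $M'$, namely the diagonal matrix $\diag(X_1,\dots,X_n,X_1^{-1},\dots,X_n^{-1})$ of \eqref{eq: matrices of X1 and P1}. Its eigenvalues are $\{X_i, X_i^{-1}\}_{i=1}^n$, manifestly paired into reciprocals, so the power-sum identity $\operatorname{tr}(X^k) = \operatorname{tr}(X^{-k})$ holds on $\Upsilon(\mathcal{U}_{\bm{\tau}}/W)$ for every $k \in \mathbb{Z}$.

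To globalise, I would observe that each $\operatorname{tr}(X^{\pm k})$ is a regular function on $\mathcal{M}_n$ (it is a polynomial in the invertible matrices $A_1, A_2$). These regular functions agree on $\Upsilon(\mathcal{U}_{\bm{\tau}}/W)$, which by Proposition \ref{prop: Upsilon is injective on subset} and dimension count is a $2n$-dimensional constructible subset of the irreducible $2n$-dimensional variety $\mathcal{M}_n$ (Theorem \ref{thrm: CM space is smooth and irreducible}); hence it is Zariski-dense, and the identity extends to all of $\mathcal{M}_n$. Newton's identities in characteristic zero then imply that $X$ and $X^{-1}$ share a characteristic polynomial, so the eigenvalues of $X$ form a multiset invariant under $x \mapsto x^{-1}$, which may therefore be enumerated as $(x_1,x_1^{-1},\dots,x_n,x_n^{-1})$.

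For the global morphism $\mathcal{M}_n \to (\mathbb{C}^\ast)^n/W$, the ring $\mathbb{C}[(\mathbb{C}^\ast)^n]^{W}$ for $W = S_n \ltimes \mathbb{Z}_2^n$ is generated by the elementary symmetric polynomials in $x_i + x_i^{-1}$; these are read off from the (palindromic) coefficients of the characteristic polynomial of $X$, producing regular functions on $\mathcal{M}_n$ and thus the claimed map. The only point requiring care is the density of $\Upsilon(\mathcal{U}_{\bm{\tau}}/W)$, but this is immediate from injectivity, the matching dimensions, and the irreducibility of $\mathcal{M}_n$; the rest is a clean chain of implications.
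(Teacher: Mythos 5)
Your argument follows the paper's proof exactly: establish $\tr X^k = \tr X^{-k}$ on the chart $\Upsilon(\mathcal{U}_{\bm{\tau}}/W)$, then extend to all of $\mathcal{M}_n$ by Zariski-density (using injectivity, dimension count, and irreducibility). Your version is actually somewhat more careful than the paper's terse proof in spelling out the density argument and in invoking Newton's identities; the only place both proofs are slightly informal is the final step from \say{the characteristic polynomial of $X$ is self-reciprocal} to \say{the eigenvalues pair off as $(x_i, x_i^{-1})$} — this needs the observation that $\det X = 1$ (rather than $-1$), so that $p(z) = z^{2n}p(1/z)$ is genuinely palindromic and factors as $z^n q(z + z^{-1})$, but this too follows by continuity from the chart, where $\det X = \prod_i X_i X_i^{-1} = 1$.
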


\begin{proof}
By the above, the property $\tr X^k = \tr X^{-k}$ holds on a dense subset of $\mathcal{M}_n$ (on the image of $\Upsilon$); it is therefore true everywhere. Also, generically on $\mathcal{M}_n$, the eigenvalues of $X$ are pairwise reciprocal. This obviously remains true even when some of the $x_i$ degenerate to $\pm1$.
\end{proof}

In other words, the eigenvalues of $X = A_1A_2$ are always of the form $(x_1,\dots,x_n, x_{1}\inv,\dots,x_n\inv)$, which defines a point $\mathbf{x} \coloneqq (x_1,\dots,x_n) \in (\mathbb{C}^\ast)^n$, but only up to the action of $W$. Our next goal is to establish the following result.

\begin{proposition}\label{prop: Upsilon is isomorphic on a subset}
The map $\Upsilon$ defines an isomorphism $\mathcal{U}_{\bm{\tau}}/W \cong \mathcal{M}_n^{\bm{\tau}}$, where
\begin{equation}\label{eq: Mn tau}
\mathcal{M}_n^{\bm{\tau}} \coloneqq \{(A_1,A_2,A_3,A_4)\in \mathcal{M}_n : \delta(\mathbf{x})\delta_{\bm{\tau}}(\mathbf{x}) \neq 0\}.
\end{equation}
\end{proposition}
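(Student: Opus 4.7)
The plan is to construct an algebraic inverse $\Psi : \mathcal{M}_n^{\bm{\tau}} \to \mathcal{U}_{\bm{\tau}}/W$ to $\Upsilon$; combined with the injectivity from Proposition \ref{prop: Upsilon is injective on subset}, this will yield the isomorphism. The inclusion $\Upsilon(\mathcal{U}_{\bm{\tau}}/W) \subseteq \mathcal{M}_n^{\bm{\tau}}$ is immediate from Proposition \ref{prop: matrices of Ai}: the matrix $X = A_1 A_2$ in the image is diagonal with entries $X_1,\dots,X_n,X_1^{-1},\dots,X_n^{-1}$, so the tuple $\mathbf{x}$ of Lemma \ref{lem: eigenvalues of X are pairewise reciprocal} coincides with $\bm{X}$ modulo $W$, and the condition $\delta(\bm{X})\delta_{\bm{\tau}}(\bm{X})\neq 0$ on $\mathcal{U}_{\bm{\tau}}$ transfers directly.

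For surjectivity and the construction of $\Psi$, I would take a representative $(A_1, A_2, A_3, A_4)$ of a point of $\mathcal{M}_n^{\bm{\tau}}$. Since $\delta(\mathbf{x}) \neq 0$, the matrix $X = A_1 A_2$ has $2n$ distinct eigenvalues, so after a $\GL_{2n}(\mathbb{C})$-conjugation I may assume $X = \diag(x_1,\dots,x_n,x_1^{-1},\dots,x_n^{-1})$. Combining $A_2 = A_1^{-1}X$ with the Hecke relation $A_2 - A_2^{-1} = \overline{u}_0 I$ yields $A_1^{-1}X - X^{-1}A_1 = \overline{u}_0 I$; in the eigenbasis of $X$ this forces all entries of $A_1$ off the diagonal to vanish outside the positions $(i, i+n)$ and $(i+n, i)$, and pins the diagonal entries to $\tilde{v}(x_i)$. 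The second Hecke relation $A_1 - A_1^{-1} = \overline{k}_0 I$ then fixes the product of the two off-diagonal entries in each $2 \times 2$ block by the identity $\tilde{v}(z)\tilde{v}(z^{-1}) + 1 = \tilde{u}(z)\tilde{u}(z^{-1})$, which one verifies directly from \eqref{eq: utilde(z) and vtilde(z)}. Using $\delta_{\bm{\tau}}(\mathbf{x}) \neq 0$ to ensure $\tilde{u}(x_i) \neq 0$, a unique $P_i \in \mathbb{C}^*$ is extracted via $(A_1)_{i,i+n} = \tilde{u}(x_i)/P_i$, giving a candidate preimage $(\bm{P}, \bm{X}) \in \mathcal{U}_{\bm{\tau}}$.

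The step I expect to be the main obstacle is showing that $\Upsilon(\bm{P}, \bm{X})$ agrees with $(A_1, A_2, A_3, A_4)$ up to conjugation. By construction the standard matrices $A_1^{\mathrm{std}}, A_2^{\mathrm{std}}$ from Proposition \ref{prop: matrices of Ai} coincide with $A_1, A_2$ in the chosen basis, so the issue is the pair $(A_3, A_4)$. For this I would appeal to a Deligne--Simpson uniqueness argument: both $(A_3, A_4)$ and $(A_3^{\mathrm{std}}, A_4^{\mathrm{std}})$ represent points of the three-punctured character variety
\[
\{(B, C) \in C_3 \times C_4 : BC = X^{-1}\} \GIT \GL_{2n}(\mathbb{C}),
\]
which by the dimension formula of Theorem \ref{thrm: smooth character variety} has dimension $0$ and is connected by Theorem \ref{eq: connected character variety}, hence is a single point. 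The two pairs are therefore related by some $d \in \GL_{2n}(\mathbb{C})$ commuting with $X$, which is diagonal. A direct inspection of Proposition \ref{prop: matrices of Ai} shows that the joint stabilizer of $A_1^{\mathrm{std}}$ (forcing $d_i = d_{i+n}$) and $A_3^{\mathrm{std}}$ (whose generically nonzero off-diagonal entries at positions with $|i - j| \notin \{0, n\}$ force the remaining $d_i$ to all be equal) reduces to scalars, so $d$ acts trivially.

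Algebraicity of $\Psi$ is then clear, since the $x_i$ are algebraic in the coefficients of the characteristic polynomial of $X$ and the $P_i$ are rational in the matrix entries of $A_1$ after diagonalization; hence $\Psi$ is a morphism inverse to $\Upsilon$, completing the isomorphism $\mathcal{U}_{\bm{\tau}}/W \cong \mathcal{M}_n^{\bm{\tau}}$.
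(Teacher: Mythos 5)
Your direct determination of $A_1$ from the Hecke relations is a genuinely nice alternative to the paper's route. The paper treats both $(A_1,A_2)$ and $(A_3,A_4)$ by splitting $\mathcal{M}_n$ into two Deligne--Simpson problems on a three-punctured sphere and invoking the classification of modules over the multiplicative preprojective algebra (Crawley-Boevey--Shaw) to establish rigidity; your block-by-block computation with the two Hecke relations and the identity $\widetilde{v}(z)\widetilde{v}(z^{-1})+1=\widetilde{u}(z)\widetilde{u}(z^{-1})$ replaces the quiver argument for Problem~\ref{prob: DS problem for X}(i) with an elementary one, and using Theorems~\ref{thrm: smooth character variety} and~\ref{eq: connected character variety} for Problem~\ref{prob: DS problem for X}(ii) is an acceptable substitute for the paper's \cite[Theorem 1.5]{crawley-boevey}.

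However, the gluing step, which you yourself flag as ``the main obstacle,'' has a real gap, and it stems from normalising in the wrong order. After diagonalising $X$, the residual gauge is the full diagonal torus. You extract $\bm{P}$ from the entries $(A_1)_{i,\,i+n}$, but these are not invariant under that residual gauge: conjugation by a diagonal $d$ sends $(A_1)_{i,\,i+n}$ to $(d_i/d_{i+n})(A_1)_{i,\,i+n}$, i.e.\ replaces $P_i$ by $P_i\,d_{i+n}/d_i$. So the candidate preimage $(\bm{P},\bm{X})$ depends on the chosen representative and $\Psi$ is not yet a well-defined map. Correspondingly, the Deligne--Simpson uniqueness only gives $(A_3,A_4)=d(A_3^{\mathrm{std}},A_4^{\mathrm{std}})d^{-1}$ for \emph{some} diagonal $d$, and nothing forces this $d$ to be a scalar: the assertion that ``the joint stabilizer of $A_1^{\mathrm{std}}$ and $A_3^{\mathrm{std}}$ reduces to scalars, so $d$ acts trivially'' is a non-sequitur, since $d$ is a conjugator between two distinct representatives, not a stabilizer of either. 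With $d$ non-scalar, the tuple $(A_1^{\mathrm{std}}(\bm{P}),A_2^{\mathrm{std}}(\bm{P}),A_3,A_4)$ is \emph{not} equal to $\Upsilon(\bm{P},\bm{X})$, but rather to $\Upsilon(\bm{P}'',\bm{X})$ for the shifted $\bm{P}''$.

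The fix is to reverse the order of normalisations, which is precisely what the paper does (setting $D=\mathrm{Id}$ first). After diagonalising $X$, first use the Deligne--Simpson uniqueness for Problem~\ref{prob: DS problem for X}(ii) to conjugate by a diagonal $e$ so that $(A_3,A_4)=(A_3^{\mathrm{std}},A_4^{\mathrm{std}})$ exactly; since $A_3^{\mathrm{std}}$ has generically nonzero off-diagonal entries, the diagonal stabiliser of $(A_3^{\mathrm{std}},A_4^{\mathrm{std}})$ is scalars only, so this exhausts the residual gauge. Only then is $A_1$ pinned down and your direct computation reads off a well-defined $\bm{P}$. With that reordering your argument closes.
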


In view of Proposition \ref{prop: Upsilon is injective on subset}, we need only to show $\Upsilon$ is onto. Given a point $(A_1,A_2,A_3,A_4) \in \mathcal{M}_n^{\bm{\tau}}$, we show that it is representation by the matrices from Proposition \ref{prop: matrices of Ai} in a suitable basis (at the classical level $q = 1$). Unlike in \cite{oblomkovJun04}, we cannot easily solve the matrix equations determining the $A_i$, so we instead use the interpretation involving the Deligne-Simpson problem and the representation of a multiplicative preprojective algebra (see \S\ref{subsec: quiver varieties}).

\subsection{Two Deligne-Simpson Problems}\label{subsec: two DS problems}

The condition $\delta(\mathbf{x}) \neq 0$ guarantees that $X = A_1A_2$ is diagonalisable, so we readily assume that
\begin{equation}\label{eq: diagonal X}
X = \diag(x_1,\dots,x_n,x_1\inv,\dots,x_n\inv), \qquad\text{with}\ \delta(\mathbf{x})\delta_{\bm{\tau}}(\mathbf{x}) \neq 0.
\end{equation}

The problem of determining $A_1$, $A_2$, $A_3$, $A_4$ then splits into two separate problems, by considering a so-called \textit{pair of pants} decomposition of the four-punctured sphere.

\begin{problem}\label{prob: DS problem for X}
Let $X$ be the diagonal matrix \eqref{eq: diagonal X} and $C_i = [\Lambda_i] \subseteq \GL_{2n}(\mathbb{C})$ the conjugacy classes chosen in accordance with \eqref{eq: our eigendata}.
\begin{enumerate}[label=\rom]
	\item Find $A_1 \in C_1$ and $A_2 \in C_2$ with $A_1A_2X\inv = \Id_{2n}$.
	\item Find $A_3 \in C_3$ and $A_4 \in C_4$ with $XA_3A_4 = \Id_{2n}$.
\end{enumerate}
\end{problem}

Each of Problems \ref{prob: DS problem for X}(i) and (ii) is a Deligne-Simpson problem on a three-punctured sphere. The corresponding star-shaped quivers are shown below in Figure \ref{fig: DS problem for X}.

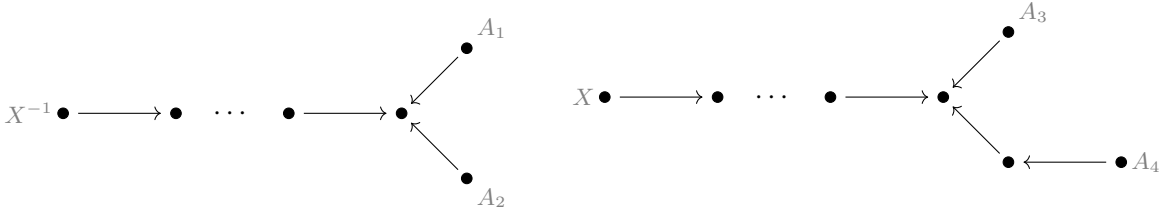
\begin{figure}[H]
\captionsetup[subfigure]{justification=centering}
\begin{subfigure}[b]{0.45\linewidth}
\centering
\begin{tikzpicture}
\filldraw (0,0) circle (2pt);
\filldraw (0.866,0.866) circle (2pt) node[above right,gray] {\footnotesize $A_1$};
\filldraw (0.866,-0.866) circle (2pt) node[below right,gray] {\footnotesize $A_2$};
\filldraw (-1.5,0) circle (2pt);
\filldraw (-3,0) circle (2pt);
\filldraw (-4.5,0) circle (2pt) node[left,gray] {\footnotesize $X\inv$};
\node at (-2.25,0) {$\cdots$};
\draw[->] (0.75,0.75) -- (0.125,0.125);
\draw[->] (0.75,-0.75) -- (0.125,-0.125);
\draw[->] (-1.3,0) -- (-0.2,0);
\draw[->] (-4.3,0) -- (-3.2,0);
\end{tikzpicture}
\vspace{2mm}
\caption{The quiver $Q_1$ for Problem \ref{prob: DS problem for X}(i).}
\label{subfig: quiver for DS problem 1 for X}
\end{subfigure}\hfill
\begin{subfigure}[b]{0.5\linewidth}
\centering
\begin{tikzpicture}
\filldraw (0,0) circle (2pt);
\filldraw (0.866,0.866) circle (2pt) node[above right,gray] {\footnotesize $A_3$};
\filldraw (0.866,-0.866) circle (2pt);
\filldraw (2.366,-0.866) circle (2pt) node[right,gray] {\footnotesize $A_4$};
\filldraw (-1.5,0) circle (2pt);
\filldraw (-3,0) circle (2pt);
\filldraw (-4.5,0) circle (2pt) node[left,gray] {\footnotesize $X$};
\node at (-2.25,0) {$\cdots$};
\draw[->] (0.75,0.75) -- (0.125,0.125);
\draw[->] (0.75,-0.75) -- (0.125,-0.125);
\draw[->] (2.166,-0.866) -- (1.066,-0.866);
\draw[->] (-1.3,0) -- (-0.2,0);
\draw[->] (-4.3,0) -- (-3.2,0);
\end{tikzpicture}
\vspace{2mm}
\caption{The quiver $Q_2$ for Problem \ref{prob: DS problem for X}(ii).}
\label{subfig: quiver for DS problem 2 for X}
\end{subfigure}\hfill
\vspace{5mm}
\caption{The quivers associated with Problem \ref{prob: DS problem for X}.}
\label{fig: DS problem for X}
\end{figure}

The dimension vectors of the quivers $Q_1$ and $Q_2$, reading vertices left-to-right, are
\begin{equation}\label{eq: dimension vectors of Q1 and Q2}
\mathbf{n}_1 = (1,\dots,2n-1,2n,n,n) \qquad\text{and}\qquad \mathbf{n}_2 = (1,\dots,2n-1,2n,n,n,1).
\end{equation}

We know already that each of these problems has a solution. As we shall explain, the solution is unique up to conjugation, and the corresponding multiplicative quiver variety in each case consists of a point. Additional $n$ coordinates $p_1,\dots,p_n$ arise corresponding to the different ways of \say{glueing} the two solutions together. The following result will be used.

\begin{theorem}[{\cite[Lemma 1.5 and Theorem 1.8]{crawley-boeveyshaw}}]\label{thrm: preprojective algbera modules}
The dimension of a simple $\Lambda^\mathbf{q}$-module is a positive root for the corresponding quiver $Q$. Moreover, if $\alpha$ is the dimension of a $\Lambda^\mathbf{q}$-module, then $\alpha$ admits a decomposition into a sum of positive roots $\alpha = \beta + \gamma + \cdots$ with $\mathbf{q}^\beta = \mathbf{q}^\gamma = \cdots = 1$.
\end{theorem}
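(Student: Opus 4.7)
The plan is to combine an elementary determinant identity with a reflection-functor argument in the spirit of Crawley-Boevey's analysis of deformed preprojective algebras. The two key intermediate assertions are: (i) for \emph{any} $\Lambda^{\mathbf q}$-module $M$ of dimension vector $\alpha$, one has $\mathbf q^\alpha = 1$; and (ii) any simple $\Lambda^{\mathbf q}$-module has dimension vector a positive root of $Q$. Once (i) and (ii) are in hand, the decomposition claim drops out from any composition series $0 = M_0 \subset \dots \subset M_k = M$ with simple quotients $S_i$ of dimension $\beta_i$: additivity of dimension gives $\alpha = \sum_i \beta_i$, while (ii) ensures each $\beta_i$ is a positive root and (i), applied to each $S_i$, gives $\mathbf q^{\beta_i} = 1$.

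For (i), fix $M$ and observe that for each $a \in \overline Q_1$ the element $1 + aa^\ast$ acts as $\Id_{M_v}$ on every component with $v \neq h(a)$, so its determinant on $M$ equals $\det(\Id_{M_{h(a)}} + X_a X_{a^\ast})$. By Sylvester's determinant identity this equals $\det(\Id_{M_{t(a)}} + X_{a^\ast} X_a)$. Consequently, the factors $(1 + aa^\ast)^{\varepsilon(a)}$ and $(1 + a^\ast a)^{\varepsilon(a^\ast)} = (1 + a^\ast a)^{-\varepsilon(a)}$ in the defining product of Definition~\ref{def: Lambda q} contribute reciprocal determinants on $M$, so the total determinant of the left-hand side is $1$. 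The right-hand side contributes $\prod_v q_v^{\dim M_v} = \mathbf q^\alpha$, and equating yields (i).

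For (ii), I would follow a multiplicative analogue of the reflection-functor machinery of Crawley-Boevey and Holland. For each loopless vertex $v$ at which the representation is suitably non-degenerate, one constructs an equivalence between appropriate full subcategories of $\Lambda^{\mathbf q}$- and $\Lambda^{s_v \cdot \mathbf q}$-modules that realises the simple reflection $s_v$ on dimension vectors along with a compatible twisted action on the parameters $\mathbf q$. Starting from a simple module of dimension $\alpha$, a sequence of such reflections decreases an appropriate height function until $\alpha$ is either brought to a simple root (in which case $\alpha$ is manifestly a positive real root) or brought into the fundamental chamber (in which case $\alpha$ is a positive imaginary root by the standard Kac-theoretic argument). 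In either outcome $\alpha \in \Delta^+(Q)$.

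The genuine obstacle lies in (ii): constructing the reflection functors rigorously, verifying that they preserve simplicity, and controlling the loci on which the invertibility of $1 + aa^\ast$ can break down after reflection. The multiplicative twisting of $\mathbf q$ (as opposed to the linear shift that appears in the additive theory) complicates the bookkeeping, and the presence of loops at distinguished vertices of the quivers relevant here requires separate treatment. These technical issues form the core of \cite{crawley-boeveyshaw}, and any direct proof would essentially have to reproduce that analysis.
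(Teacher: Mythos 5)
This statement is quoted directly from Crawley-Boevey and Shaw (\cite[Lemma~1.5 and Theorem~1.8]{crawley-boeveyshaw}); the paper supplies no proof of its own, so there is no in-paper argument to compare against. Your reduction to (i), (ii), and a composition-series argument is exactly the right organisation and matches how the cited results are established. Step~(i)---that $\mathbf q^\alpha = 1$ for every $\Lambda^{\mathbf q}$-module of dimension vector $\alpha$---is indeed proved by the determinant identity $\det(\Id + X_aX_{a^\ast}) = \det(\Id + X_{a^\ast}X_a)$: pairing the factor $(1+aa^\ast)^{\varepsilon(a)}$ with $(1+a^\ast a)^{\varepsilon(a^\ast)}=(1+a^\ast a)^{-\varepsilon(a)}$ for each $a\in Q_1$ forces the determinant of the product on the left of the defining relation to equal $1$, while the right-hand side acts as the scalar $q_v$ on each $M_v$ and hence contributes $\prod_v q_v^{\dim M_v}=\mathbf q^\alpha$, exactly as you write (this \emph{is} Crawley-Boevey--Shaw's Lemma~1.5). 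Step~(ii)---that a simple $\Lambda^{\mathbf q}$-module has dimension vector a positive root of $Q$---genuinely requires the multiplicative reflection-functor (middle-convolution) machinery you gesture at; you correctly flag the real difficulties (preserving simplicity across reflections, the multiplicative twist of $\mathbf q$, and vertices where the functor is undefined) and rightly note that a complete account amounts to reproducing \cite{crawley-boeveyshaw}. Your composition-series deduction of the ``moreover'' clause from (i) and (ii) is correct: each composition factor is simple, hence of positive-root dimension by (ii), and each satisfies $\mathbf q^{\beta_i}=1$ by applying (i) to the factor itself. In short, the proposal is faithful to the actual proof strategy of the cited reference, with (i) carried out in full and (ii) honestly left as a sketch.
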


\subsection{Problem \ref*{prob: DS problem for X}(i)}\label{subsec: first DS problem}

The quiver $Q = Q_1$ is a finite Dynkin quiver of type $D_{2n+2}$. We consider the multiplicative preprojective algebra $\Lambda^\mathbf{q}$ with the parameters $\mathbf{q}$ determined from the eigenvalues of $X$, $A_1$ and $A_2$ (see \eqref{eq: vector q} above). We use the eigenvalues of $X$ in the following order (again going from left-to-right along $Q$): $x_n\inv,\dots,x_1\inv,x_n,\dots,x_1$. We are interested in $\Lambda^\mathbf{q}$-modules of dimension
\begin{equation*}
\alpha = \mathbf{n}_1 = (1,\dots,2n-1,2n,n,n).
\end{equation*}

By Theorem \ref{thrm: preprojective algbera modules}, $\alpha$ must be a sum of positive roots of $Q$. Since the arrows of $Q$ are represented by injective/surjective maps, the support of each summand $\beta,\gamma,\dots$ should include the central node. This gives us the following possibilities for the summands in Figure \ref{fig: positive roots with central support} below.

\begin{figure}[H]
\centering
\begin{tikzpicture}
\node at (0.866,0.866) {\footnotesize$0$};
\draw[->] (0.75,0.75) -- (0.125,0.125);
\node at (0.866,-0.866) {\footnotesize$1$};
\draw[->] (0.75,-0.75) -- (0.125,-0.125);
\node at (0,0) {\footnotesize$1$};
\draw[->] (-1.3,0) -- (-0.2,0);
\node at (-1.5,0) {\footnotesize$1$};
\node at (-2.25,0) {$\cdots$};
\node at (-3,0) {\footnotesize$1$};
\draw[->] (-4.3,0) -- (-3.2,0);
\node at (-4.5,0) {\footnotesize$0$};
\node at (-5.25,0) {$\cdots$};
\node at (-6,0) {\footnotesize$0$};
\begin{scope}[xshift=9cm]
\node at (0.866,0.866) {\footnotesize$1$};
\draw[->] (0.75,0.75) -- (0.125,0.125);
\node at (0.866,-0.866) {\footnotesize$1$};
\draw[->] (0.75,-0.75) -- (0.125,-0.125);
\node at (0,0) {\footnotesize$1$};
\draw[->] (-1.3,0) -- (-0.2,0);
\node at (-1.5,0) {\footnotesize$1$};
\node at (-2.25,0) {$\cdots$};
\node at (-3,0) {\footnotesize$1$};
\draw[->] (-4.3,0) -- (-3.2,0);
\node at (-4.5,0) {\footnotesize$0$};
\node at (-5.25,0) {$\cdots$};
\node at (-6,0) {\footnotesize$0$};
\end{scope}
\begin{scope}[xshift=5.5cm,yshift=-3cm]
\node at (0.866,0.866) {\footnotesize$1$};
\draw[->] (0.75,0.75) -- (0.125,0.125);
\node at (0.866,-0.866) {\footnotesize$1$};
\draw[->] (0.75,-0.75) -- (0.125,-0.125);
\node at (0,0) {\footnotesize$2$};
\node at (-0.75,0) {$\cdots$};
\node at (-1.5,0) {\footnotesize$2$};
\draw[->] (-2.8,0) -- (-1.7,0);
\node at (-3,0) {\footnotesize$1$};
\node at (-3.75,0) {$\cdots$};
\node at (-4.5,0) {\footnotesize$1$};
\draw[->] (-5.8,0) -- (-4.7,0);
\node at (-6,0) {\footnotesize$0$};
\node at (-6.75,0) {$\cdots$};
\node at (-7.5,0) {\footnotesize$0$};
\end{scope}
\end{tikzpicture}
\vspace{2mm}
\caption{Positive roots supported at the central node.}
\label{fig: positive roots with central support}
\end{figure}
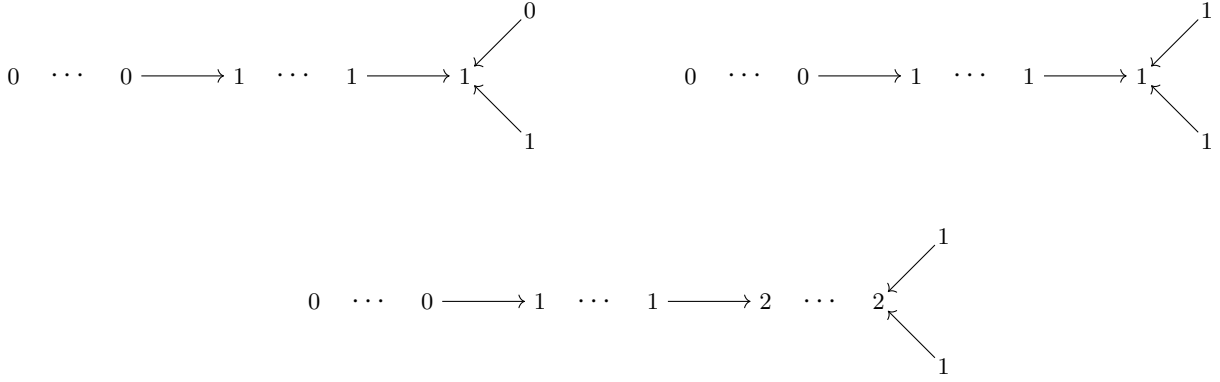

The first two types of summand can be ruled out because $\mathbf{q}^\beta \neq 1$ in these cases. For example, in the first case, we have a one-dimensional subspace at the central node which has to be an eigenspace for $X\inv$, $A_1$ and $A_2$ with respective eigenvalues $x_i^{\pm1}$, $k_0$ and $u_0$. Since $A_1A_2X\inv = \Id_{2n}$, it follows that $x_i^{\pm1}k_0u_0 = 1$, a contradiction to the assumption $\delta_{\bm{\tau}}(\mathbf{x}) \neq 0$ made in \eqref{eq: diagonal X}.

Consequently, each positive root summand $\beta,\gamma,\dots$ should be of the final type. In this case, we have a two-dimensional subspace at the central node. As above, this corresponds to $X\inv$ having eigenvalues $x_i^{\pm1}$, $x_j^{\pm1}$ for some $i,j$. The condition $\mathbf{q}^\beta = 1$ then forces these eigenvalues to be reciprocal to one another. Hence, $\beta$ must be one of $e_{1\,n+1},e_{2\,n+2},\dots,e_{n\,2n}$, where $e_{ij}$ is the $2n$-tuple associated with the last positive root in Figure \ref{fig: positive roots with central support}, that is
\begin{equation*}
e_{ij} = (0,\dots,0,\smallunderbrace{1}_{i^\text{th}},\dots,1,\smallunderbrace{2}_{j^\text{th}},\dots,2,1,1).
\end{equation*}
Hence, the only allowed decomposition of $\alpha = \mathbf{n}_1$ into a sum of simple roots is
\begin{equation}\label{eq: decomposition of n1}
\alpha = e_{1\,n+1} + e_{2\,n+2} + \cdots + e_{n\,2n}.
\end{equation}
As the next lemma shows, this determines the $\Lambda^\mathbf{q}$-module uniquely.

\begin{lemma}\phantomsection\label{lem: preprojective algbera determined uniquely}
\begin{enumerate}[label=\rom]
	\item For each root $\beta_i \coloneqq e_{i\,n+i}$, there is a unique $\Lambda^\mathbf{q}$ module $M_i$ of dimension $\beta_i$.
	\item If $M$ is a $\Lambda^\mathbf{q}$-module of dimension $\alpha = \mathbf{n}_1$, then $M$ is isomorphic to the direct sum of $M_i$ for $i=1,\dots,n$. Hence, up to isomorphism, there is one $\Lambda^\mathbf{q}$-module of dimension $\alpha = \mathbf{n}_1$.
\end{enumerate}
\end{lemma}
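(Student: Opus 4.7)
The plan is to apply Theorem \ref{thrm: preprojective algbera modules} at two different scales and then settle a single $\Ext^1$-vanishing point.

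For part (i) I would split into existence and uniqueness. Since $\beta_i$ is a positive real root of $D_{2n+2}$ supported on a connected sub-Dynkin diagram, and $\mathbf{q}^{\beta_i}=1$ by construction, existence of $M_i$ follows from an explicit construction: the reduced three-punctured-sphere Deligne-Simpson problem for three $2\times 2$ matrices with eigenvalue pairs $\{k_0,-k_0\inv\}$, $\{u_0,-u_0\inv\}$, $\{x_i\inv,x_i\}$ has a unique irreducible solution up to conjugation under our $\delta_{\bm\tau}(\mathbf{x})\neq 0$ hypothesis, and the Crawley-Boevey--Shaw translation \cite{crawley-boeveyshaw} converts this into a $\Lambda^\mathbf{q}$-module of dimension $\beta_i$. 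For uniqueness, let $N$ be any $\Lambda^\mathbf{q}$-module of dimension $\beta_i$. Theorem \ref{thrm: preprojective algbera modules} writes $\beta_i$ as a sum of positive roots $\gamma$ with $\mathbf{q}^\gamma=1$; repeating the genericity analysis of the paragraph preceding this lemma---now restricted to the smaller Dynkin subsystem supporting $\beta_i$---rules out every non-trivial decomposition, so $N$ is simple. Uniqueness of a simple module of a fixed real-root dimension over a Dynkin multiplicative preprojective algebra then gives $N\cong M_i$.

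For part (ii) the same theorem applied to $M$, together with the discussion preceding the lemma, forces the composition factors of $M$ to be exactly $M_1,\dots,M_n$, each with multiplicity one. It remains to show that the resulting filtration splits, i.e.\ that $M\cong\bigoplus_{i=1}^n M_i$.

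The main obstacle will be the Ext-vanishing $\Ext^1_{\Lambda^\mathbf{q}}(M_i,M_j)=0$ for $i\neq j$. I would attack this through the standard two-term projective resolution of a module over a multiplicative preprojective algebra of a Dynkin quiver, which expresses $\Ext^1(M_i,M_j)$ via the symmetric Euler pairing of $\beta_i$ and $\beta_j$ on the $D_{2n+2}$ root lattice together with a $\mathbf{q}$-dependent correction. For our generic $\bm\tau$, this quantity vanishes. Granting the Ext-vanishing, a short induction on composition length splits the filtration and produces the asserted decomposition $M\cong M_1\oplus\cdots\oplus M_n$, which is then evidently unique up to isomorphism.
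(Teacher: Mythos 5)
Your proposal follows essentially the same route as the paper: existence/uniqueness of the $M_i$ via the real-root case of Crawley-Boevey--Shaw (their Theorem 1.9; your rank-$2$ Deligne--Simpson/hypergeometric interpretation is the paper's stated alternative), composition factors pinned down by Theorem \ref{thrm: preprojective algbera modules}, and splitting via $\Ext^1_{\Lambda^\mathbf{q}}(M_i,M_j)=0$ from [CBS, Thm.~1.6]. One imprecision worth noting: the formula from [CBS, Thm.~1.6] is
\[
\dim\Ext^1_{\Lambda^\mathbf q}(M,N)=\dim\Hom(M,N)+\dim\Hom(N,M)-(\underline{\dim}\,M,\underline{\dim}\,N),
\]
with $(\cdot,\cdot)$ the symmetric bilinear form on the $D_{2n+2}$ lattice and no $\mathbf{q}$-dependent correction; genericity of $\bm\tau$ enters only in making the $M_i$ simple and pairwise non-isomorphic (killing the $\Hom$ terms), while $(\beta_i,\beta_j)=0$ for $i\neq j$ is a purely combinatorial fact about the roots $e_{i\,n+i}$ that can be checked directly, independent of $\bm\tau$.
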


\begin{proof}
(i) Existence and uniqueness of $M_i$ follow from \cite[Theorem 1.9]{crawley-boeveyshaw}, using the fact that $Q$ is of finite Dynkin type and thus its roots are real. Alternatively, the problem of constructing $M_i$ can be interpreted as the description of the monodromy of the classical hypergeometric equation, where uniqueness (so-called \textit{rigidity} in \textit{op. cit.}) as well as the existence are well know.

(ii) From \eqref{eq: decomposition of n1}, it follows that the composition series of $M$ consists of the modules $M_1,\dots,M_n$. By using \cite[Theorem 1.6]{crawley-boeveyshaw}, one can check that $\Ext_{\Lambda^\mathbf{q}}^1(M_i,M_j) = 0$ for $i \neq j$. Hence, $M$ splits into the direct sum of the $M_i$.
\end{proof}

\subsection{Problem \ref*{prob: DS problem for X}(ii)}\label{subsec: second DS problem}

The quiver $Q = Q_2$ is not of finite Dynkin type, but we can use the fact it becomes Dynkin after the removal of the extending vertex (the framing). We again begin by considering the multiplicative preprojective algebra $\Lambda^\mathbf{q}$ with the parameters $\mathbf{q}$ determined from the eigenvalues of $X$, $A_3$ and $A_4$. We are interested in $\Lambda^\mathbf{q}$-modules of dimension
\begin{equation*}
\alpha = \mathbf{n}_2 = (1,\dots,2n-1,2n,n,n,1).
\end{equation*}

\begin{lemma}\label{lem: preprojective algebra module is simple}
Any $\Lambda^\mathbf{q}$-module of dimension $\alpha = \mathbf{n}_2$ is simple.
\end{lemma}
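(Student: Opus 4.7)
The plan is to apply Theorem \ref{thrm: preprojective algbera modules}. If $M$ is a $\Lambda^{\mathbf{q}}$-module of dimension $\alpha = \mathbf{n}_2$, its composition factors have dimensions giving a decomposition $\alpha = \beta_1 + \cdots + \beta_k$ into positive roots of $Q_2$ with $\mathbf{q}^{\beta_i} = 1$ for each $i$. Simplicity is the statement that $k = 1$, so it suffices to rule out any decomposition with $k \geq 2$.

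Since $\alpha_\infty = 1$, in any such decomposition exactly one summand (say $\beta_1$) has $(\beta_1)_\infty = 1$ while the rest have $(\beta_i)_\infty = 0$. If $k \geq 2$, then there exists a positive root $\beta$ of $Q_2$ with $0 < \beta < \alpha$, $\beta_\infty = 0$, and $\mathbf{q}^\beta = 1$. Such a $\beta$ lives in the sub-quiver obtained by deleting the framing vertex, which is the Dynkin quiver of type $D_{2n+2}$. Following the enumeration strategy of \S\ref{subsec: first DS problem} (compare Figure \ref{fig: positive roots with central support}), I would split positive roots $\beta \leq (1,2,\ldots,2n-1,2n,n,n)$ of $D_{2n+2}$ into a small number of families, parameterised by the support at the central vertex and by which ranges of the three legs $\beta$ occupies, and for each family substitute the parameter values \eqref{eq: our quiver data} to compute $\mathbf{q}^\beta$ explicitly.

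In each case the relation $\mathbf{q}^\beta = 1$ collapses to an equation of the form $k_0^a k_n^b t^c u_0^d u_n^e \cdot m(\mathbf{x}) = \pm 1$, where $m(\mathbf{x})$ is a Laurent monomial in the eigenvalues $x_i$ of $X$. If $m(\mathbf{x})$ is nontrivial, the condition $\delta(\mathbf{x})\delta_{\bm{\tau}}(\mathbf{x}) \neq 0$ assumed in \eqref{eq: diagonal X} rules the relation out; if $m(\mathbf{x}) = 1$ and $(a,b,c,d,e) \neq 0$, squaring if necessary gives a relation $k_0^{2a} k_n^{2b} t^{2c} u_0^{2d} u_n^{2e} = 1$, contradicting the genericity of $\bm{\tau}$ (Definition \ref{def: generic paramteters}).

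The main obstacle is the enumeration itself: compared with the analysis for $Q_1$ in \S\ref{subsec: first DS problem}, the different eigenvalue orderings of $C_3, C_4$ alter the exponents of $k_n, u_n, t$ appearing in $\mathbf{q}^\beta$, so one must verify case-by-case that the characteristic factor $t^{2n-2}$ from the distinguished eigenvalue of $A_4$ in \eqref{eq: our eigendata} propagates through $\mathbf{q}^\beta$ and prevents any accidental cancellation. With that verification in hand, every decomposition with $k \geq 2$ is excluded and simplicity follows.
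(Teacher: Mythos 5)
Your proof follows the same structure as the paper's: decompose $\alpha$ via Theorem \ref{thrm: preprojective algbera modules}, observe that since $\alpha_\infty = 1$ at least one summand $\beta$ must have $\beta_\infty = 0$ and hence be a positive root of the finite Dynkin quiver $D_{2n+2}$, then run the enumeration of \S\ref{subsec: first DS problem} and use the $t$-powers in the eigenvalues of $A_4$ to exclude $\mathbf{q}^\beta = 1$. One small imprecision worth flagging: since $\beta_\infty = 0$, the parameter $q_\infty = t^{-2n}$ (the ratio involving the distinguished eigenvalue $k_n t^{2n-2}$) never actually enters $\mathbf{q}^\beta$; the $t$-factor that obstructs cancellation is the $t^{\pm 2}$ coming from $q_4 = -k_n^{-2}t^2$ (the ratio $\lambda_{4,1}/\lambda_{4,2}$ of the other two eigenvalues of $A_4$), so it is this factor, not $t^{2n-2}$, that you need to track in the case analysis. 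With that substitution the argument goes through exactly as in the paper.
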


\begin{proof}
If it were not simple, $\alpha$ would be a sum $\beta + \gamma + \cdots$ of roots of $Q$ with $\mathbf{q}^\beta = \mathbf{q}^\gamma = \cdots = 1$. At least one of them, $\beta$ say, must have a zero entry corresponding to the extended vertex. Hence, it is a root of the finite Dynkin graph of type $D_{2n+2}$. The same analysis as for Problem \ref{prob: DS problem for X}(i) can then be carried out; the difference is that the eigenvalues for $A_4$ have powers of $t$, preventing $\mathbf{q}^\beta = 1$, a contradiction.
\end{proof}

By \cite[Theorem 1.8]{crawley-boeveyshaw}, $\alpha = \mathbf{n}_2$ is a root of $Q_2$. The dimension of the multiplicative quiver variety can be found using \cite[Theorem 1.10]{crawley-boeveyshaw}. Namely, it is $2p(\alpha)$, where $p(\alpha) \coloneqq 1 - q(\alpha)$ and $q(\alpha)$ being the Tits form. By direct calculation, $q(\alpha) = 1$ and hence $p(\alpha) = 0$. Therefore, the variety $\mathcal{M}_{\mathbf{q},\alpha}(Q_2)$ is zero-dimensional. This is the so-called \textit{rigid} case of the Deligne-Simpson problem, in which case its solution is unique; see \cite[Theorem 1.5]{crawley-boevey}.

\subsection{Proof of Proposition \ref*{prop: Upsilon is isomorphic on a subset}}\label{subsec: proof of surjectivity}

Recall we must show that every point in $\mathcal{M}_n^{\bm{\tau}}$ \eqref{eq: Mn tau} is represented by $(A_1,A_2,A_3,A_4)$ in accordance with the formulae for the matrices in Proposition \ref{prop: matrices of Ai}, at the classical level $q = 1$.

\begin{proof}[Proof of Proposition \ref*{prop: Upsilon is isomorphic on a subset}]
First, we set all $P_i = 1$ and denote the corresponding matrices $A_i^\bullet$. From the construction of these matrices, we know that $A_1^\bullet A_2^\bullet = X$ and $A_3^\bullet A_4^\bullet = X\inv$, where $X$ is the matrix \eqref{eq: matrices of X1 and P1}. This gives us solutions to Problems \ref{prob: DS problem for X}(i) and (ii). Assuming $\delta_{\bm{\tau}}(\bm{X}) \neq 0$, each of these Deligne-Simpson solutions is unique up to conjugation that leaves $X$ unchanged (i.e. up to conjugation by diagonal matrices). The general solution to these problems, given $X$, is
\begin{equation*}
(A_1,A_2,A_3,A_4) = (CA_1^\bullet C\inv, CA_2^\bullet C\inv, DA_3^\bullet D\inv, DA_4^\bullet D\inv),
\end{equation*}
with $C$ and $D$ diagonal. Using simultaneous conjugation, we can set $D = \Id$, meaning $A_3$ and $A_4$ are already in the desired form. On the other hand, matrices $A_1^\bullet$ and $A_2^\bullet$ have most of their entries equal to zero, so that they commute with any matrix of the form $C = \diag(c_1,\dots,c_n,c_1,\dots,c_n)$. Hence, without loss of generality, we may restrict the choice of $C$ to $\diag(1,\dots,1,P_1,\dots,P_n)$. It is easy to check that conjugating $A_1^\bullet$ and $A_2^\bullet$ by such $C$ results in precisely $A_1$ and $A_2$ from Proposition \ref{prop: matrices of Ai}. Therefore, any point in $\mathcal{M}_n^{\bm{\tau}}$ with given $X$ satisfying \eqref{eq: diagonal X} is represented by the $A_i$ we calculated in \S\ref{subsec: coordinate chart}.
\end{proof}

\subsection{Duality and the Second Chart}\label{subsec: duality and the second chart}

Recall the duality isomorphism $\varepsilon$ from Theorem \ref{thrm: duality isomorphism}. It acts on the elements \eqref{eq: Zi} by
\begin{equation}\label{eq: dual of Zi}
\varepsilon(Z_1) = (Z_3')\inv, \qquad \varepsilon(Z_2) = (Z_2')\inv, \qquad \varepsilon(Z_3) = (Z_1')\inv, \qquad \varepsilon(Z_4) = (Z_4')\inv.
\end{equation}
Here, the images are understood as elements of the DAHA $\mathcal{H}' \coloneqq \mathcal{H}_{q\inv,\bm{\sigma}}$. At the classical level $q = 1$, this induces an isomorphism of the corresponding character varieties, given by
\begin{equation}\label{eq: duality on CM space}
\mathcal{E} : (A_1,A_2,A_3,A_4) \mapsto (A_1',A_2',A_3',A_4') \coloneqq (A_3\inv, A_2\inv, A_1\inv, A_4\inv),
\end{equation}
or, in the alternative form from \S\ref{subsec: alternative form}, given by
\begin{equation}\label{eq: alternative form duality on CM space}
\mathcal{E} : (X,T,Y,v,w) \mapsto (X',Y',T',v',w') \coloneqq (Y,X,T\inv,v,w).
\end{equation}

Let us use $\varepsilon'$ and $\mathcal{E}'$ for the duality maps applied to the DAHA and character variety, respectively, for $\mathcal{H}'$. The maps \eqref{eq: dual of Zi} and \eqref{eq: duality on CM space} are involutions in the sense $\varepsilon' \circ \varepsilon = \mathcal{E}' \circ \mathcal{E} = \id$. Obviously, we have $\mathcal{E}' \circ \Phi = \Phi \circ \varepsilon$, where $\Phi$ is the EGO map from Definition \ref{def: EGO map}. We can use $\mathcal{E}$ to construct a second coordinate chart on $\mathcal{M}_n$ by transferring coordinates from the corresponding variety $\mathcal{M}_n'$. In this chart, the matrix $Y = A_3\inv A_2\inv = A_4A_1$ is put into diagonal form
\begin{equation}\label{eq: diagonal Y}
Y = \diag(y_1,\dots,y_n,y_1\inv,\dots,y_n\inv),
\end{equation}
so $y_1,\dots,y_n$ give $n$ of the coordinates, with the remaining $n$ coordinates read-off of $A_2$ or $A_3$.

\section{The Isomorphism}\label{sec: isomorphism}

Proofs of our main results can now be obtained by repeating the arguments from \cite{oblomkovJun04}. We will briefly outline them here, but the reader should consult \textit{op. cit.} for the full details. Throughout this section, $\mathcal{H} = \mathcal{H}_{1,\bm{\tau}}$ denotes the DAHA at the classical level $q = 1$, and $\mathcal{Z} \coloneqq Z(\mathcal{H})$ denotes its centre. The DAHA is of type $C^\vee C_n$ unless specified otherwise.

First, we have the following general result due to Oblomkov.

\begin{theorem}[{\cite[Theorem 5.1]{oblomkovJun04}}]\label{thrm: spherical subalgebra}
For any DAHA $\mathcal{H}$, the following are true:
\begin{enumerate}[label=\rom]
	\item The spherical subalgebra $e_\tau\mathcal{H}e_\tau$ is commutative.
	\item The variety $M = \Spec(e_\tau\mathcal{H}e_\tau)$ is irreducible, normal and Cohen-Macaulay.
	\item The right $e_\tau\mathcal{H}e_\tau$-module $\mathcal{H}e_\tau$ is Cohen-Macaulay.
	\item The left $\mathcal{H}$-action on $\mathcal{H}e_\tau$ induces an algebra isomorphism $\mathcal{H} \cong \End_{e_\tau\mathcal{H}e_\tau}(\mathcal{H}e_\tau)$.
	\item The map $\eta : z \mapsto ze_\tau$ is an isomorphism $\mathcal{Z} \xrightarrow{\sim} e_\tau\mathcal{H}e_\tau$. Hence, $M = \Spec(\mathcal{Z})$.
\end{enumerate}
\end{theorem}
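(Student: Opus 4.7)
My plan is to run the strategy of \cite[Theorem 5.1]{oblomkovJun04}, whose key inputs (Basic Representation, PBW property, localisation) are available in the $C^\vee C_n$ setting as Proposition \ref{prop: Basic Representation}, the PBW property from Section \ref{subsec: duality}, and Corollary \ref{cor: localisation of the DAHA by delta}.

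Part (i) is immediate: the right-hand side of the spherical isomorphism in Corollary \ref{cor: localisation of the DAHA by delta} is commutative (at $q = 1$), so $e_\tau \mathcal{H} e_\tau$ embeds into a commutative algebra. For parts (ii)--(iv), I would use the PBW-type filtration of $\mathcal{H}$ by total degree in the $\bm{Y}$-generators. Since the DAHA relations only produce lower-degree commutators between $\bm{Y}$'s and the degree-zero subalgebra $\mathbb{C}[\bm{X}^{\pm 1}] \otimes H_n$, the associated graded becomes a tensor product of the affine Hecke algebra of type $\widetilde{C}_n$ with $\mathbb{C}[\bm{Y}^{\pm 1}]$. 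Taking $e_\tau$-spherical parts, the classical identification $e_\tau (\mathbb{C}[\bm{X}^{\pm 1}] \rtimes H_n) e_\tau \cong \mathbb{C}[\bm{X}^{\pm 1}]^W$ (valid for generic parameters) yields $\operatorname{gr}(e_\tau \mathcal{H} e_\tau) \cong \mathbb{C}[\bm{X}^{\pm 1}]^W \otimes \mathbb{C}[\bm{Y}^{\pm 1}]$. This is the coordinate ring of $((\mathbb{C}^\ast)^n/W) \times (\mathbb{C}^\ast)^n$, which is irreducible, normal and Cohen-Macaulay (a product of a smooth torus with the quotient of a smooth torus by the finite group $W$). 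Standard filtered-to-graded lifting then delivers (ii); an analogous argument applied to $\mathcal{H}e_\tau$ gives (iii). For (iv), the graded analogue is a Chevalley-type double-centraliser identification at the level of the affine Hecke algebra, which lifts to the required isomorphism $\mathcal{H} \cong \End_{e_\tau \mathcal{H} e_\tau}(\mathcal{H} e_\tau)$.

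Part (v) then follows formally from (i) and (iv). The map $\eta(z) = z e_\tau$ is a ring homomorphism landing in $e_\tau \mathcal{H} e_\tau$, and is injective because $z$ acts on $\mathcal{H} e_\tau$ by left multiplication with $z e_\tau$ while the $\mathcal{H}$-action on $\mathcal{H} e_\tau$ is faithful by (iv). For surjectivity, given $f \in e_\tau \mathcal{H} e_\tau$, right multiplication by $f$ on $\mathcal{H}e_\tau$ commutes with the right $e_\tau \mathcal{H} e_\tau$-action thanks to the commutativity from (i), hence equals left multiplication by a unique $z \in \mathcal{H}$; the identity $L_h \circ R_f = R_f \circ L_h$ combined with $R_f = L_z$ and the faithfulness of $\mathcal{H}$ forces $z$ to be central, and $\eta(z) = e_\tau f = f$. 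The main obstacle is the PBW verification of the claimed associated-graded structure of $\mathcal{H}$ under the $\bm{Y}$-filtration; everything else then falls out from standard filtered-to-graded lifting together with the generic semisimplicity of $H_n$ under our assumption on $\bm{\tau}$.
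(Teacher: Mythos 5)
The paper does not give a proof of this result: it is cited verbatim from Oblomkov, and the body of the paper relies on it as a black box. So the comparison here is really between your proposal and Oblomkov's original argument. Parts (i) and (v) of your proposal are fine; (i) is exactly the embedding into $\mathbb{C}[\bm{X}^{\pm1},\bm{P}^{\pm1}]^W_{\delta\delta_{\bm{\tau}}}$ coming from Corollary \ref{cor: localisation of the DAHA by delta}, and the derivation of (v) from (i) and (iv) via the double-commutant identity $L_h R_f = R_f L_h$ is correct.

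The difficulty is with your filtration argument for (ii)--(iv). You filter $\mathcal{H}$ by total degree in the $\bm{Y}$-generators and assert that the associated graded is a \emph{tensor product of algebras} of the affine Hecke algebra $\mathbb{C}[\bm{X}^{\pm1}]\rtimes H_n$ with $\mathbb{C}[\bm{Y}^{\pm1}]$, that is, that the classes of the $Y_i$ commute with the classes of the $T_j$ in $\operatorname{gr}(\mathcal{H})$. This is false. The defining identity \eqref{yi} together with the braid relations give $T_i Y_i T_i = Y_{i+1}$; this relation has $\bm{Y}$-degree one on both sides, so it is degree-preserving for any filtration that assigns degree zero to $T_j$ and $X_k$ and degree one to each $Y_\ell$, and it therefore survives unchanged in $\operatorname{gr}(\mathcal{H})$. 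The cross-relations between $X$'s and $Y$'s suffer from the same problem: at $q=1$ they are still multiplicative, of the shape $Y_iX_j = (\text{Hecke})\,X_jY_i\,(\text{Hecke})$, and thus degree-preserving. So your filtration does not degenerate $\mathcal{H}$ at all, and the claimed computation $\operatorname{gr}(e_\tau\mathcal{H}e_\tau)\cong\mathbb{C}[\bm{X}^{\pm1}]^W\otimes\mathbb{C}[\bm{Y}^{\pm1}]$ does not hold. Indeed, even the shape of this answer is off: if a degeneration were available, the natural candidate would be $\mathbb{C}[\bm{X}^{\pm1},\bm{Y}^{\pm1}]^W$, whose spectrum is $\bigl((\mathbb{C}^*)^n\times(\mathbb{C}^*)^n\bigr)/W$ with the diagonal $W$-action, and not $\bigl((\mathbb{C}^*)^n/W\bigr)\times(\mathbb{C}^*)^n$.

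What actually makes the lifting argument work (and is the route behind Oblomkov's proof) is a degeneration in the Hecke \emph{parameters}, not in the $\bm{Y}$-degree. If one sets the parameters $\bm{\tau}$ equal to $1$ while keeping $q=1$, the quadratic relations collapse to $T_i^2=1$, the Basic Representation sends each $T_i$ to the group element $s_i$, and the $Y_i$'s become translation elements that commute with the $X_j$'s because $P_iX_j = X_jP_i$ at $q=1$; thus $\mathcal{H}_{1,\mathbf{1}}\cong\mathbb{C}[\bm{X}^{\pm1},\bm{Y}^{\pm1}]\rtimes W$, and its spherical subalgebra is $\mathbb{C}[\bm{X}^{\pm1},\bm{Y}^{\pm1}]^W$, which is an integrally closed Cohen--Macaulay domain. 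Writing $\bm{\tau}=1+\hbar\bm{c}$ and filtering by powers of $\hbar$ turns $\mathcal{H}_{1,\bm{\tau}}$ into a flat filtered deformation of this group-algebra case, and normality, Cohen--Macaulayness and the double-centraliser property lift from the associated graded exactly as you intended. You should replace the $\bm{Y}$-adic filtration with this parameter degeneration for the argument to be sound.
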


Analogously to \cite[Lemma 5.1]{oblomkovJun04} (cf. Corollary \ref{cor: localisation of the DAHA by delta} above), we have
\begin{equation*}
\mathcal{Z}_{\delta(\bm{X})\delta_{\bm{\tau}}(\bm{X})} \cong \mathbb{C}[\bm{X}^{\pm1},\bm{P}^{\pm1}]_{\delta(\bm{X})\delta_{\bm{\tau}}(\bm{X})}^W.
\end{equation*}
Equivalently, we have an isomorphism $\Spec(\mathcal{Z})_{\delta(\bm{X})\delta_{\bm{\tau}}(\bm{X})} \cong \mathcal{U}_{\bm{\tau}}/W$. We obtain from this the fact
\begin{equation}\label{eq: Upsilon as a map from Spec(Z)}
\Upsilon : \Spec(\mathcal{Z})_{\delta(\bm{X})\delta_{\bm{\tau}}(\bm{X})} \xrightarrow{\sim} \mathcal{M}_n^{\bm{\tau}}
\end{equation}
by combining it with $\Upsilon$ \eqref{eq: Upsilon} and Proposition \ref{prop: Upsilon is isomorphic on a subset}. Consequently, we have a rational map
\begin{equation}\label{eq: rational map}
\Upsilon : \Spec(\mathcal{Z}) \dasharrow \mathcal{M}_n.
\end{equation}

\begin{theorem}[{cf. \cite[Theorem 6.1]{oblomkovJun04}}]\label{thrm: main result}
The rational map $\Upsilon$ \eqref{eq: rational map} is a regular isomorphism of algebraic varieties. In particular, $M = \Spec(\mathcal{Z}) = \Spec(e_\tau\mathcal{H}e_\tau)$ is smooth.
\end{theorem}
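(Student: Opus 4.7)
The plan is to promote the birational $\Upsilon$ to a globally defined morphism $\widetilde{\Upsilon}: \Spec(\mathcal{Z}) \to \mathcal{M}_n$, verify it is bijective, and conclude via Zariski's Main Theorem using normality of $\Spec(\mathcal Z)$ from Theorem \ref{thrm: spherical subalgebra}(ii) and smoothness of $\mathcal{M}_n$ from Theorem \ref{thrm: CM space is smooth and irreducible}.

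\textbf{Construction of $\widetilde{\Upsilon}$.} At $q = 1$, the PBW decomposition together with the fact that $\mathbb{C}[\bm{X}^{\pm 1}]^W$ and $\mathbb{C}[\bm{Y}^{\pm 1}]^W$ lie in $\mathcal Z$ show that $\mathcal H$ is a finite $\mathcal Z$-module. Consequently $\mathcal He_\tau'$ is a finitely generated $\mathcal Z$-module which, by comparison with the chart calculation of Section \ref{subsec: coordinate chart}, has generic rank $2n$. Since $\mathcal He_\tau$ is Cohen--Macaulay by Theorem \ref{thrm: spherical subalgebra}(iii) and $e_\tau \mathcal H e_\tau$ is normal, standard commutative-algebra arguments promote $\mathcal He_\tau'$ to a reflexive (indeed locally free away from a codimension-$2$ locus) $\mathcal Z$-module. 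The four elements $Z_1, \ldots, Z_4$ of \eqref{eq: Zi} commute with $H_n'$, so they act as $\mathcal Z$-linear endomorphisms of $e_\tau'\mathcal He_\tau'$; taking their characteristic-polynomial coefficients together with traces of arbitrary words in the $Z_i$'s produces ring homomorphisms into $\mathcal Z$. Thanks to Lemma \ref{lem: product of DAHA elements is the identity} and the quadratic Hecke relations satisfied by $T_0, T_0^\vee, T_n^\vee$ together with the deformed relation of Lemma \ref{lem: Z4 relation}, these invariants satisfy the defining equations of $\mathcal{R}_n$ modulo $\GL_{2n}$-conjugacy. This yields an algebra map $\mathbb C[\mathcal M_n] \to \mathcal Z$, hence the desired morphism $\widetilde{\Upsilon}$. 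By Proposition \ref{prop: matrices of Ai} and Remark \ref{rem: V isomorphic to induced module}, the restriction of $\widetilde{\Upsilon}$ to $\Spec(\mathcal Z)_{\delta \delta_{\bm\tau}} \cong \mathcal U_{\bm\tau}/W$ coincides with $\Upsilon$, so $\widetilde{\Upsilon}$ regularly extends the rational map \eqref{eq: rational map}.

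\textbf{Bijectivity.} For injectivity, note that it is a standard fact (following from the Basic Representation and the Dunkl-operator calculus) that $\mathcal Z$ is generated by $\mathbb{C}[\bm{X}^{\pm 1}]^W$ together with $\mathbb{C}[\bm{Y}^{\pm 1}]^W$. Under $\widetilde{\Upsilon}^*$, these subalgebras correspond to symmetric functions of the eigenvalues of $X = A_1A_2$ and $Y = A_4A_1$ respectively (compatibly with the duality $\mathcal E$ of \S\ref{subsec: duality and the second chart}), and these are recoverable from the quadruple $(A_1, A_2, A_3, A_4) \in \mathcal M_n$. Hence $\widetilde{\Upsilon}$ separates points. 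For surjectivity, the original chart dominates $\mathcal M_n^{\bm\tau}$ by Proposition \ref{prop: Upsilon is isomorphic on a subset}. The duality map $\mathcal E$ yields a second chart in which $Y$ is diagonalized, reaching points where $\delta\delta_{\bm\tau}(\mathbf x) = 0$ but $\delta\delta_{\bm\sigma}(\mathbf y) \neq 0$. Genericity of $\bm\tau$ (Definition \ref{def: generic paramteters}) guarantees that these two charts jointly cover a dense open subset of $\mathcal M_n$; combining this with injectivity, irreducibility of both sides, and the coincidence of dimensions $2n$, a constructibility argument forces $\widetilde{\Upsilon}$ to be onto.

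\textbf{Conclusion.} $\widetilde{\Upsilon}$ is then a bijective morphism from the normal irreducible affine variety $\Spec(\mathcal Z)$ to the smooth irreducible affine variety $\mathcal M_n$, of the same dimension $2n$, and birational by Step 1. Zariski's Main Theorem implies $\widetilde{\Upsilon}$ is an isomorphism; in particular $\Spec(\mathcal Z) \cong \mathcal M_n$ is smooth, and via Theorem \ref{thrm: spherical subalgebra}(v), the same holds for $\Spec(e_\tau \mathcal H e_\tau)$.

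\textbf{Main obstacle.} The delicate step is the extension across the loci $\delta\delta_{\bm\tau}(\mathbf x) = 0$, where the explicit matrix entries of Proposition \ref{prop: matrices of Ai} develop poles but the intrinsic elements $Z_i \in \mathcal H$ remain regular. Showing that the algebra map $\mathbb C[\mathcal M_n] \to \mathcal Z$ really lands in $\mathcal Z$ (and not merely in a localization) requires care; the cleanest route is the two-chart argument combined with the finiteness of $\mathcal H$ over $\mathcal Z$, which ensures that $\widetilde{\Upsilon}$ is automatically proper and hence has closed image. The deeper strata where both $\delta\delta_{\bm\tau}(\mathbf x)$ and $\delta\delta_{\bm\sigma}(\mathbf y)$ vanish must be handled by a dimension count together with the injectivity already established, exactly as in Oblomkov's treatment for $\GL_n$.
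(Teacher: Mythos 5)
Your proposal shares some of the paper's structural ideas (the two coordinate charts via duality, and normality of $\Spec(\mathcal Z)$), but the route you take diverges significantly and contains several real gaps.

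\textbf{The paper's actual argument} is more economical than your construction of a global morphism via trace maps. It shows that $\Upsilon$ and the dual chart $\Upsilon'$ agree on the intersection $\Spec(\mathcal Z)_{\delta\delta_{\bm\tau}} \cap \Spec(\mathcal Z)_{\delta\delta_{\bm\sigma}}$, then observes (via the Basic Representation) that the two boundary hypersurfaces $\delta(\bm X)\delta_{\bm\tau}(\bm X) = 0$ and $\delta(\bm Y)\delta_{\bm\sigma}(\bm Y) = 0$ intersect transversally in $\Spec(\mathcal Z)$, so the complement of the union of the two charts has codimension at least two. Normality of $\Spec(\mathcal Z)$ (Theorem \ref{thrm: spherical subalgebra}(ii)) then forces $\Upsilon$ to extend to a regular morphism everywhere; applying the same reasoning to $\Upsilon^{-1}$, using smoothness (hence normality) of $\mathcal{M}_n$ and the fact that $\Upsilon^{-1}$ is regular away from codimension two, gives that $\Upsilon^{-1}$ is also regular. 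This handles regularity, bijectivity and surjectivity simultaneously, with no need to construct a morphism $\widetilde\Upsilon$ globally.

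\textbf{Gaps in your argument.} First, your injectivity step rests on the assertion that $\mathcal Z$ is generated by $\mathbb C[\bm X^{\pm 1}]^W$ and $\mathbb C[\bm Y^{\pm 1}]^W$. This is not a standard fact one may invoke; in the analogous rational Cherednik setting this kind of generation statement is generally \emph{derived from} the Calogero--Moser isomorphism rather than used to prove it, so as written your argument is at risk of circularity. Second, your surjectivity step does not work: an injective morphism between irreducible varieties of the same dimension is not automatically surjective (an open immersion is a counterexample). The phrase \say{a constructibility argument forces $\widetilde\Upsilon$ to be onto} is carrying the whole burden, and constructibility only gives a dense open image; to close the gap one needs either properness (which finiteness of $\mathcal H$ over $\mathcal Z$ does not give --- that is a statement about the algebra $\mathcal H$, not about the map $\Spec(\mathcal Z) \to \mathcal M_n$) or the codimension-two argument the paper uses. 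Third, the construction of $\widetilde\Upsilon$ via reflexivity of $\mathcal He_\tau'$ and traces of words in the $Z_i$ would need considerably more justification: one must show the trace functionals land in $\mathcal Z$ (not a localisation) and that the resulting $\GL_{2n}$-invariants satisfy the relations defining $\mathbb C[\mathcal M_n]$. The paper sidesteps all of this.

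\textbf{What to do.} Replace your bijectivity step by the codimension-two argument: show, as in the paper, that the complement of $\Spec(\mathcal Z)_{\delta\delta_{\bm\tau}} \cup \Spec(\mathcal Z)_{\delta\delta_{\bm\sigma}}$ has codimension at least two, and likewise for $\mathcal M_n^{\bm\tau} \cup \mathcal M_n^{\bm\sigma}$ inside $\mathcal M_n$. Then regularity of $\Upsilon$ follows from normality of $\Spec(\mathcal Z)$, and regularity of $\Upsilon^{-1}$ from normality (indeed smoothness) of $\mathcal M_n$. No appeal to Zariski's Main Theorem, properness, or generation of the centre is required.
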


\begin{proof}
Using duality in the senses of \eqref{eq: dual of Zi} and \eqref{eq: duality on CM space}, we obtain from \eqref{eq: Upsilon as a map from Spec(Z)} an isomorphism
\begin{equation*}
\Upsilon' : \Spec(\mathcal{Z})_{\delta(\bm{Y})\delta_{\bm{\sigma}}(\bm{Y})} \xrightarrow{\sim} \mathcal{M}_n^{\bm{\sigma}}
\end{equation*}
for a suitable open subset $\mathcal{M}_n^{\bm{\sigma}} \subseteq \mathcal{M}_n$. The maps $\Upsilon$ and $\Upsilon'$ agree on the intersection
\begin{equation}\label{eq: intersection of charts}
\Spec(\mathcal{Z})_{\delta(\bm{X})\delta_{\bm{\tau}}(\bm{X})} \cap \Spec(\mathcal{Z})_{\delta(\bm{Y})\delta_{\bm{\sigma}}(\bm{Y})}.
\end{equation}
Indeed, for a point $z$ in \eqref{eq: intersection of charts}, the corresponding $\mathcal{H}$-modules \eqref{eq: induced module V} are completely determined by the one-dimensional character $\chi_z$ representing this point $z$. As a result, $\Upsilon$ is regular on \eqref{eq: intersection of charts}. Using the Basic Representation, it is not too hard to see that the hypersurfaces $\delta(\bm{X})\delta_{\bm{\tau}}(\bm{X}) = 0$ and $\delta(\bm{Y})\delta_{\bm{\sigma}}(\bm{Y}) = 0$ are transversal in $\Spec(\mathcal{Z})$ (in fact, this is true for any hypersurfaces $f(\bm{X}) = 0$ and $g(\bm{Y}) = 0$ with $f \in \mathbb{C}[\bm{X}^{\pm1}]^W$ and $g \in \mathbb{C}[\bm{Y}^{\pm1}]^W$). It then follows that $\Upsilon$ is regular everywhere except a subset of codimension two. But the normality of $\Spec(\mathcal{Z})$ (Theorem \ref{thrm: spherical subalgebra}(ii)) implies that $\Upsilon$ is regular everywhere. After that, using the fact $\mathcal{M}_n$ is smooth, irreducible and that $\Upsilon\inv$ is regular away from a subset of codimension two, one sees that $\Upsilon\inv$ is also regular everywhere.
\end{proof}

As an immediate consequence, by the same arguments as in \cite{oblomkovJun04}, we arrive at the following.

\begin{corollary}[{cf. \cite[Corollaries 6.1 and 6.2]{oblomkovJun04}}]\phantomsection\label{cor: main result}
\begin{enumerate}[label=\rom]
	\item $\mathcal{H}e_\tau$ is a projective $e_\tau\mathcal{H}e_\tau$-module.
	\item $\mathcal{H} = \End(E)$, where $E$ is a vector bundle over $\Spec(\mathcal{Z})$, i.e. $\mathcal{H}$ is an Azumaya algebra.
	\item Every irreducible representation of $\mathcal{H}$ is of the form $V_z = \mathcal{H}e_\tau \otimes_{e_\tau\mathcal{H}e_\tau} \chi_z$ for $z \in \Spec(\mathcal{Z})$.
	\item $V_z$ has dimension $2^nn!$, and is a regular representation of the finite Hecke algebra $H_n$.
\end{enumerate}
\end{corollary}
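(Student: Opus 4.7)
The plan is to deduce all four statements as essentially formal consequences of Theorem \ref{thrm: main result} (which provides the smoothness of $M = \Spec(\mathcal{Z})$), combined with the general structural properties assembled in Theorem \ref{thrm: spherical subalgebra}, following the template of \cite{oblomkovJun04}.

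First, for (i), I would invoke the Auslander--Buchsbaum formula. Theorem \ref{thrm: spherical subalgebra}(iii) states that $\mathcal{H}e_\tau$ is Cohen--Macaulay over $e_\tau\mathcal{H}e_\tau$, while Theorem \ref{thrm: main result} gives that $\Spec(e_\tau\mathcal{H}e_\tau)$ is smooth and hence $e_\tau\mathcal{H}e_\tau$ is regular. Localising at any maximal ideal, a maximal Cohen--Macaulay module over a regular local ring has projective dimension zero by Auslander--Buchsbaum; since $e_\tau\mathcal{H}e_\tau$ is Noetherian and $\mathcal{H}e_\tau$ finitely generated, this yields global projectivity. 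For (ii), the projectivity from (i) identifies $\mathcal{H}e_\tau$ with the module of global sections of a vector bundle $E$ on $M$; Theorem \ref{thrm: spherical subalgebra}(iv) then gives $\mathcal{H} \cong \End_{e_\tau\mathcal{H}e_\tau}(\mathcal{H}e_\tau) = \End(E)$. Locally $E$ is free of some rank $r$, so $\End(E)$ is locally a matrix algebra of rank $r^2$ over $\mathcal{Z}$, which is precisely the Azumaya property.

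For (iii), given any $V \in \Irrep(\mathcal{H})$, the center $\mathcal{Z}$ acts via a character, so $V$ factors through $\mathcal{H}/\mathfrak{m}_z\mathcal{H}$ for some $z \in \Spec(\mathcal{Z})$. By (ii) this quotient is a matrix algebra, whose unique simple module is exactly $V_z = \mathcal{H}e_\tau \otimes_{e_\tau\mathcal{H}e_\tau}\chi_z$. For (iv), the rank of $E$, equal to $\dim V_z$, is constant on the irreducible variety $\Spec(\mathcal{Z})$; I would pin it down by evaluating at a point in the chart $\mathcal{U}_{\bm{\tau}}/W$. By Proposition \ref{prop: induced module V is in Irrep'(H)} and Remark \ref{rem: V isomorphic to induced module}, the resulting $V_z$ coincides with $V_{\bm{\mu},\bm{\nu}} \in \Irrep'(\mathcal{H})$, which has dimension $|W| = 2^n n!$ and restricts to the regular representation of $H_n$. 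Local freeness of $E$ then propagates both the dimension and, via a deformation argument, the $H_n$-module structure to every fibre.

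The main obstacle is the subtlety in (iv): whereas constancy of $\dim V_z$ is immediate from local freeness, constancy of the $H_n$-module structure requires the additional step of noting that $H_n$ is semisimple under the genericity condition of Definition \ref{def: generic paramteters}, so the multiplicities of each simple $H_n$-summand in $V_z$ are themselves locally constant on $\Spec(\mathcal{Z})$; combined with connectedness and the generic calculation at one point of $\mathcal{U}_{\bm{\tau}}/W$, this forces $V_z \cong \mathbb{C}H_n$ for every $z$. Once this is in place, the identification $\Irrep'(\mathcal{H}) = \Irrep(\mathcal{H})$ of Remark \ref{rem: Irrep'(H) equal to Irrep(H)} drops out immediately.
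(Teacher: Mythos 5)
Your proposal is correct and follows essentially the same route that the paper indicates by citing Oblomkov's Corollaries 6.1 and 6.2: Cohen--Macaulayness over a regular ring (via Auslander--Buchsbaum) gives projectivity of $\mathcal{H}e_\tau$, the $\End$-isomorphism from Theorem \ref{thrm: spherical subalgebra}(iv) then exhibits $\mathcal{H}$ as $\End(E)$ hence Azumaya, fibres of the Azumaya algebra classify irreducibles, and the rank together with the $H_n$-module structure is pinned down at a point of the chart $\mathcal{U}_{\bm\tau}/W$ and propagated globally by local freeness, semisimplicity of $H_n$, and connectedness of $\Spec(\mathcal{Z})$. You have simply made explicit the steps the paper leaves to the reference, and handled the one genuinely delicate point (constancy of the $H_n$-isotype) carefully.
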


\subsection{Quantisation}\label{subsec: quantisation}

Because our calculations of the matrices $A_i$ in \S\ref{subsec: coordinate chart} were performed at the quantum level, we can generalise the isomorphism \eqref{eq: main result} to $q \neq 1$. To begin with, we assume $q = 1$ and work with $\mathcal{M}_n$ in the alternative form introduced in \S\ref{subsec: alternative form} with relations \eqref{eq: T relation}--\eqref{eq: wv relation}.

\begin{lemma}\label{lem: algebra of regular functions at q = 1}
The algebra $\mathbb{C}[\mathcal{M}_n]$ of regular functions on the Calogero--Moser space is generated by $wa(X,Y,T)v$, with arbitrary non-commutative polynomials $a \in \mathbb{C}\inner{X^{\pm1},Y^{\pm1},T^{\pm1}}$.
\end{lemma}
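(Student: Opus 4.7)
The plan is to combine Procesi's first fundamental theorem of matrix invariants with the defining relation \eqref{eq: YTX relation}. Since $\GL(V)$ acts freely on $\mathcal{R}_n$ by Proposition \ref{prop: alternative form}, the GIT quotient is geometric and $\mathbb{C}[\mathcal{M}_n] = \mathbb{C}[\mathcal{R}_n]^{\GL(V)}$. Procesi's theorem on invariants of $\GL_{2n}$ acting by simultaneous conjugation on the matrices $X^{\pm 1}, Y^{\pm 1}, T^{\pm 1}$ together with the natural action on the vector $v$ and covector $w$ then implies that this invariant ring is generated by traces $\tr(a(X,Y,T))$ of non-commutative Laurent polynomials, together with the scalars $w \cdot a(X,Y,T) \cdot v$. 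It therefore suffices to prove that every such trace lies in the subalgebra $A \subseteq \mathbb{C}[\mathcal{M}_n]$ generated by the $wbv$-quantities.

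For this I would rearrange \eqref{eq: YTX relation} into the form
\begin{equation*}
\Id_V = \lambda\, vw + \mu\, N + \nu\, N^{-1}, \qquad N \coloneqq YTX^{-1},
\end{equation*}
where $\lambda, \mu, \nu$ are explicit constants depending on $t$ and $k_n$, non-vanishing for generic $\bm{\tau}$. Multiplying by a word $a$, taking the trace, and using the elementary identity $\tr(a \cdot vw) = w a v$ yields the congruence
\begin{equation*}
\tr(a) \equiv \mu\,\tr(aN) + \nu\,\tr(aN^{-1}) \pmod{A}.
\end{equation*}
Iterating this produces a two-sided sequence $T_k \coloneqq \tr(aN^k)$; the Cayley--Hamilton theorem applied to the $2n \times 2n$ matrix $N$ truncates it, expressing $T_{k+2n}$ as a combination of $T_k, \ldots, T_{k+2n-1}$ with coefficients polynomial in the pure traces $\tr(N^j)$, $1\le j\le 2n$. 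The resulting finite linear system can then be solved to express $T_0 = \tr(a)$ as a polynomial in $\{wbv\}$, provided the pure traces $\tr(N^j)$ are themselves handled by the same recursive procedure (applied with $a$ replaced by $N^{j-1}$).

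The main obstacle with this direct approach is to verify non-degeneracy of the finite linear system so that $\tr(a)$ actually lies in the ring $A$ and not merely its fraction field. A cleaner conceptual alternative is to argue by separation of orbits: on the Zariski-open locus $U \subseteq \mathcal{R}_n$ where $(X,Y,T,v,w)$ defines an irreducible representation, $v$ is a cyclic vector for $\mathbb{C}\langle X^{\pm1}, Y^{\pm1}, T^{\pm1}\rangle$ and $w$ is cocyclic, so a direct reconstruction argument (sending $a \cdot v_1 \mapsto a \cdot v_2$, well-defined by cocyclicity of $w_1$) shows that the values of $\{wav\}$ determine the $\GL(V)$-orbit through any point of $U$. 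Combined with the smoothness and irreducibility of $\mathcal{M}_n$ from Theorem \ref{thrm: CM space is smooth and irreducible}, a Zariski-main-theorem-style argument exploiting normality of $\mathcal{M}_n$ would then force the induced dominant birational morphism $\mathcal{M}_n \to \Spec(A)$ to be an isomorphism, giving $A = \mathbb{C}[\mathcal{M}_n]$ as desired.
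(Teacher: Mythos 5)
The paper states this lemma without proof, so there is nothing to compare against directly; you have to stand or fall on your own. The Procesi/first-fundamental-theorem framing is the right starting point, and reducing the problem to showing $\tr(a) \in A \coloneqq \mathbb{C}\langle wbv\rangle$ is correct. Both of the routes you then offer have genuine gaps, however.

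\textbf{The recursion fails because the linear system is degenerate.} Set $N = YTX^{-1} = A_4$ and $c = k_nt^{-1}-k_n^{-1}t$, so relation \eqref{eq: YTX relation} gives, for every word $a$,
\begin{equation*}
c\,\tr(a) \;=\; t\,\tr(aN) - t^{-1}\tr(aN^{-1}) - (t-t^{-1})\,wav,
\end{equation*}
i.e.\ $s_k \coloneqq \tr(aN^k)$ satisfies the order-two recursion $t s_{k+1} - c s_k - t^{-1}s_{k-1}\equiv 0 \pmod{A}$. Its characteristic roots are $z_+ = k_nt^{-2}$ and $z_- = -k_n^{-1}$. But these are precisely two of the three distinct eigenvalues of $N = A_4$ (namely, the two on which $vw$ vanishes), so the characteristic polynomial of the order-two recursion \emph{divides} the minimal polynomial $(z+k_n^{-1})(z-k_nt^{-2})(z-k_nt^{2n-2})$ of $N$ (and a fortiori its Cayley--Hamilton polynomial). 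Consequently the square linear system you propose has a two-dimensional kernel: every sequence of the form $s_k = D_1 z_-^k + D_2 z_+^k$ satisfies both constraints identically. Working this out, the only nontrivial consequence of the system is that $\tr\!\big(a(N-z_+)(N-z_-)\big)\in A$; and since $(N-z_+)(N-z_-)$ is a scalar multiple of the rank-one projector $vw$, this is just the tautology $wav\in A$. The iteration does not produce $\tr(a)\in A$.

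\textbf{The orbit-separation argument establishes birationality, not the isomorphism.} Cyclicity of $v$ and cocyclicity of $w$ (which do hold on $\mathcal{R}_n$, since the representation is irreducible and $wv=\gamma\neq 0$) correctly give that the $wav$ separate $\GL(V)$-orbits, hence that $\operatorname{Frac}(A)=\operatorname{Frac}(\mathbb{C}[\mathcal{M}_n])$ and the morphism $\mathcal{M}_n\to\Spec(A)$ is dominant, injective and birational. But a dominant birational morphism of affine varieties with smooth, normal source is not automatically an isomorphism: $\mathbb{G}_m = \Spec\mathbb{C}[x,x^{-1}]\hookrightarrow \mathbb{A}^1 = \Spec\mathbb{C}[x]$ is a counterexample satisfying all those hypotheses. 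Zariski's Main Theorem factors a quasi-finite map as an open immersion followed by a finite map; to conclude $A=\mathbb{C}[\mathcal{M}_n]$ you would additionally need to prove that the map is \emph{finite} (equivalently proper, equivalently that $\mathbb{C}[\mathcal{M}_n]$ is a finite $A$-module), or that its image in $\Spec(A)$ is closed, and even then you would need $A$ normal to rule out $A\subsetneq\overline{A}=\mathbb{C}[\mathcal{M}_n]$. None of this is supplied, and it is precisely the substantive content of the lemma. Until one of these gaps is closed, the proposal does not yield a proof.
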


We now allow $q$ to be arbitrary and consider the $\mathscr{D}_q$-valued matrices $A_i$ defined by Proposition \ref{prop: matrices of Ai}. We also take $X = A_1A_2$, $Y = A_4A_1$, $T = A_2$, $\mathbf{v}$ and $\mathbf{w}$ as in Lemma \ref{lem: action of Hecke symmetriser on M'}.

\begin{proposition}\label{prop: quantised main result}
The elements $\mathbf{w}b(X,Y,T)\mathbf{v}$ form a subalgebra of $\mathscr{D}_q^W$ isomorphic to $e_\tau\mathcal{H}_{q,\bm{\tau}}e_\tau$, with arbitrary non-commutative polynomials $b \in \mathbb{C}\inner{X^{\pm1},Y^{\pm1},T^{\pm1}}$.
\end{proposition}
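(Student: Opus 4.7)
The plan is to construct an injective algebra homomorphism $\Psi: e_\tau\mathcal{H}_{q,\bm{\tau}}e_\tau\hookrightarrow\mathscr{D}_q^W$ via the matrix realisation of Section \ref{subsec: coordinate chart}, show its image contains the $\mathbb{C}$-linear span $\mathcal{A}_q$ of all $\mathbf{w}b(X,Y,T)\mathbf{v}$, and deduce the reverse inclusion by reducing to the classical case already established in Theorem \ref{thrm: main result}.

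Lemma \ref{lem: action of Hecke symmetriser on M'} is proved at the level of the Basic Representation without specialising $q$, so its conclusion holds verbatim: the Hecke symmetriser acts on $M' = e'M$ as the rank-one operator $\gamma^{-1}\mathbf{v}\mathbf{w}$. Writing $[h]$ for the $2n\times 2n$ matrix over $\mathscr{D}_q$ representing $h\in\mathcal{H}_{q,\bm{\tau}}$ on $M'$, the matrix of $h' \coloneqq e_\tau h e_\tau$ factors as $[h'] = \gamma^{-2}\mathbf{v}(\mathbf{w}[h]\mathbf{v})\mathbf{w}$, so the scalar $\mathbf{w}[h']\mathbf{v}\in\mathscr{D}_q$ completely determines $[h']$. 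A direct computation using $\mathbf{w}\mathbf{v} = \gamma$ then shows that $\Psi(h') \coloneqq \gamma^{-1}\mathbf{w}[h']\mathbf{v}$ defines an algebra homomorphism. It is injective by faithfulness of the matrix realisation, and lands in $\mathscr{D}_q^W$ because $e_\tau\mathcal{H}_{q,\bm{\tau}}e_\tau$ acts by $W$-invariant difference operators on the spherical module (the $q\neq 1$ analogue of Corollary \ref{cor: localisation of the DAHA by delta}). From Proposition \ref{prop: matrices of Ai} and \eqref{eq: Zi} we read off $X = [q^{-1/2}X_1]$, $T = [T_0^\vee]$ and $Y = [Z_4Z_1]$, hence for any non-commutative Laurent polynomial $b$,
\begin{equation*}
\mathbf{w}\,b(X,Y,T)\,\mathbf{v} \;=\; \gamma\,\Psi\bigl(e_\tau\, b(q^{-1/2}X_1,\; Z_4Z_1,\; T_0^\vee)\, e_\tau\bigr).
\end{equation*}
This shows $\mathcal{A}_q \subseteq \Psi(e_\tau\mathcal{H}_{q,\bm{\tau}}e_\tau)$, so $\mathcal{A}_q$ is automatically closed under multiplication.

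It remains to establish the reverse inclusion $\Psi(e_\tau\mathcal{H}_{q,\bm{\tau}}e_\tau) \subseteq \mathcal{A}_q$. At $q = 1$, this follows from Lemma \ref{lem: algebra of regular functions at q = 1} combined with the isomorphism $e_\tau\mathcal{H}_{1,\bm{\tau}}e_\tau \cong \mathbb{C}[\mathcal{M}_n]$ of Theorem \ref{thrm: main result}: together they yield $\mathcal{A}_1 = \Psi_1(e_\tau\mathcal{H}_{1,\bm{\tau}}e_\tau)$. To transport this to general $q$, I would use a flatness argument: by PBW the spherical subalgebra is a free $\mathbb{C}[q^{\pm 1/2}]$-module, the entries in Proposition \ref{prop: matrices of Ai} are Laurent-polynomial in $q^{\pm 1/2}$, and hence $\mathcal{A}_q$ fits into a flat family as well. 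Fixing a filtration with finite-dimensional graded pieces compatible with both sides, equality at $q = 1$ propagates to a Zariski-open neighbourhood by semicontinuity of dimension, and then to all $q$ by flatness.

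The main obstacle is this final step. The PBW filtration on $e_\tau\mathcal{H}_{q,\bm{\tau}}e_\tau$ inherited from the basis $\bm{X}^\lambda T_w\bm{Y}^\mu$ does not obviously line up level-by-level with a word-length filtration on $\mathcal{A}_q$ in $X, Y, T$, so a careful piece-by-piece matching is needed. A cleaner alternative would be to adapt the localisation strategy already used to prove Theorem \ref{thrm: main result}: after inverting $\delta(\bm{X})\delta_{\bm{\tau}}(\bm{X})$, both $\Psi(e_\tau\mathcal{H}_{q,\bm{\tau}}e_\tau)$ and $\mathcal{A}_q$ become identifiable with a common localisation inside $\mathbb{C}_q[\bm{X}^{\pm 1},\bm{P}^{\pm 1}]^W$, reducing the remaining inclusion to removing the localisation via a normality argument on $\Spec(\mathcal{Z})$ exactly as in \cite{oblomkovJun04}.
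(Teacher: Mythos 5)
Your proposal follows essentially the same route as the paper: use the rank-one factorisation $[e_\tau]=\gamma^{-1}\mathbf{v}\mathbf{w}$ on $M'$ (Lemma~\ref{lem: action of Hecke symmetriser on M'}, valid for all $q$), identify $X=A_1A_2$, $T=A_2$, $Y=A_4A_1$ with matrices of products of the $Z_i$, anchor at $q=1$ via Lemma~\ref{lem: algebra of regular functions at q = 1} together with Theorem~\ref{thrm: main result}, and then invoke a deformation argument. The paper organises this slightly differently — it first shows the elements $e_\tau a(Z_1,\dots,Z_4)e_\tau$ form a subalgebra $A_{q,\bm\tau}$ and maps that isomorphically onto $B_{q,\bm\tau}\subseteq\mathscr{D}_q^W$ via $e_\tau a(Z_1,\dots,Z_4)e_\tau\mapsto\gamma^{-1}\mathbf{w}a(A_1,\dots,A_4)\mathbf{v}$ — but this is a cosmetic difference from your map $\Psi$ on the whole spherical subalgebra.

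Two remarks. First, the inference ``$\mathcal{A}_q\subseteq\Psi(e_\tau\mathcal{H}_{q,\bm\tau}e_\tau)$, so $\mathcal{A}_q$ is automatically closed under multiplication'' is a non sequitur: a linear subspace of a ring need not be a subring. The correct reason, and the one the paper uses, is the rank-one relation \eqref{eq: A4 relation} (equivalently \eqref{eq: Z4 relation}), which expresses $\mathbf{v}\mathbf{w}$ as a Laurent polynomial in $X$, $Y$, $T$ — essentially \eqref{eq: YTX relation} — so that
\begin{equation*}
\bigl(\mathbf{w}\,b_1\,\mathbf{v}\bigr)\bigl(\mathbf{w}\,b_2\,\mathbf{v}\bigr)=\mathbf{w}\,b_1(\mathbf{v}\mathbf{w})\,b_2\,\mathbf{v}\in\mathcal{A}_q.
\end{equation*}
Second, you are right to flag the passage from $q=1$ to all $q$ as the crux; the paper is equally terse there (``Hence, they remain isomorphic for arbitrary $q$''), and your proposed remedies — a PBW-compatible filtration with finite, $q$-independent graded pieces, or the localisation route — are exactly the kind of argument needed to make that step rigorous. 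The proposal is therefore essentially the paper's proof, with one genuine (but easily fixable) gap in the closure-under-multiplication claim and an honest acknowledgement of the deformation step that the paper itself leaves implicit.
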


\begin{proof}
Recall the elements $Z_i \in \mathcal{H}_{q,\bm{\tau}}$ from \eqref{eq: Zi}. For $a \in \mathbb{C}\inner{Z_1^{\pm1},Z_2^{\pm1},Z_3^{\pm1},Z_4^{\pm1}}$, the elements
\begin{equation*}
e_\tau a(Z_1,Z_2,Z_3,Z_4)e_\tau
\end{equation*}
clearly belong to the spherical subalgebra $e_\tau\mathcal{H}_{q,\bm{\tau}}e_\tau$. Moreover, the relation \eqref{eq: Z4 relation} implies that these elements form a subalgebra of $e_\tau\mathcal{H}_{q,\bm{\tau}}e_\tau$, which we denote as $A_{q,\bm{\tau}}$. The matrix representation $\pi$ \eqref{eq: matrix representation}, combined with Lemmas \ref{lem: Ai preserve M'} and \ref{lem: action of Hecke symmetriser on M'}, defines an isomorphic subalgebra $B_{q,\bm{\tau}} \subseteq \mathscr{D}_q^W$:
\begin{equation*}
A_{q,\bm{\tau}} \xrightarrow{\sim} B_{q,\bm{\tau}}, \qquad e_\tau a(Z_1,Z_2,Z_3,Z_4)e_\tau \mapsto \gamma\inv\mathbf{w}a(A_1,A_2,A_3,A_4)\mathbf{v},
\end{equation*}
where $\gamma$ is the constant \eqref{eq: gamma}. At the classical level $q = 1$, we have $B_{1,\bm{\tau}} \cong e_\tau\mathcal{H}_{1,\bm{\tau}}e_\tau$ by Lemma \ref{lem: algebra of regular functions at q = 1} combined with Theorem \ref{thrm: main result}. Hence, they remain isomorphic for arbitrary $q$. Finally, any element of the form $\mathbf{w}a(A_1,A_2,A_3,A_4)\mathbf{v}$ can be transformed into $\mathbf{w}b(X,Y,T)\mathbf{v}$ by using the relation $q^{1/2}A_1A_2A_3A_4 = 1$ from Lemma \ref{lem: product of DAHA elements is the identity}.
\end{proof}

\begin{corollary}\label{cor: spherical subaglebra generators}
The spherical subalgebra $e_\tau\mathcal{H}_{q,\bm{\tau}}e_\tau$ is, as an algebra, generated by $e_\tau X_1^aY_1^be_\tau$ and $e_\tau X_1^aY_1^bT_0^\vee e_\tau$ with $a,b \in \mathbb{Z}$. Similarly, its copy in $\mathscr{D}_q^W$ is generated by $\mathbf{w}X^aY^b\mathbf{v}$ and $\mathbf{w}X^aY^bT\mathbf{v}$ with $a,b \in \mathbb{Z}$.
\end{corollary}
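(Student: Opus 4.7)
The plan is to prove the statement concerning the copy of the spherical subalgebra inside $\mathscr{D}_q^W$; the DAHA-side statement then follows directly from the isomorphism of Proposition \ref{prop: quantised main result}, under which $X_1, Y_1, T_0^\vee$ correspond to $X, Y, T$ respectively (a quick check using $X_1 = q^{1/2}Z_1Z_2$, $Y_1 = Z_4Z_1$, together with the explicit forms $A_1 = XT^{-1}$, $A_2 = T$, $A_4 = YTX^{-1}$ from the proof of Proposition \ref{prop: alternative form}). By Proposition \ref{prop: quantised main result}, every spherical element can be written as $\mathbf{w}\,b(X,Y,T)\,\mathbf{v}$ for some noncommutative Laurent polynomial $b$, so the task reduces to showing that it suffices to take $b$ of the form $X^aY^b$ or $X^aY^bT$.

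First, I would bring $b$ into canonical form using the matrix identities \eqref{eq: T relation}--\eqref{eq: TY relation}, which hold in $\End(V)$ by construction. The relation $T - T^{-1} = \overline{u}_0$ yields $T^2 = 1 + \overline{u}_0 T$, reducing every $T^{\pm k}$ to $\mathbb{C}\cdot 1 + \mathbb{C}\cdot T$. Manipulating \eqref{eq: XT relation} and \eqref{eq: TY relation} produces pushing identities such as
\begin{equation*}
TX = X^{-1}T + \overline{u}_0 X + \overline{k}_0, \qquad TY = Y^{-1}T - \overline{u}_0 Y^{-1} - \overline{u}_n,
\end{equation*}
with analogous rules for $X^{-1}, Y^{-1}$. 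Iterating these, every monomial becomes a $\mathbb{C}$-linear combination of canonical monomials of the form $X^{a_1}Y^{b_1}\cdots X^{a_k}Y^{b_k}\,T^\epsilon$ with $\epsilon\in\{0,1\}$. It therefore suffices to show each $\mathbf{w}\,X^{a_1}Y^{b_1}\cdots X^{a_k}Y^{b_k}\,T^\epsilon\,\mathbf{v}$ lies in the subalgebra $A$ generated by the proposed elements.

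I would then induct on the block-length $k$. The case $k = 1$ is immediate. For $k \geq 2$, expand
\begin{equation*}
\bigl(\mathbf{w}\,X^{a_1}Y^{b_1}\cdots X^{a_{k-1}}Y^{b_{k-1}}\,\mathbf{v}\bigr)\cdot\bigl(\mathbf{w}\,X^{a_k}Y^{b_k}T^\epsilon\,\mathbf{v}\bigr) = \mathbf{w}\,X^{a_1}Y^{b_1}\cdots Y^{b_{k-1}}\,(\mathbf{v}\mathbf{w})\,X^{a_k}Y^{b_k}T^\epsilon\,\mathbf{v},
\end{equation*}
and substitute \eqref{eq: A4 relation} for $\mathbf{v}\mathbf{w}$. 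The first factor lies in $A$ by the inductive hypothesis, so the whole product lies in $A$. One resulting summand is a nonzero scalar multiple of the desired length-$k$ element; the remaining two summands carry internal $T^{\pm 1}$ insertions which, after applying Step~1, yield further words together with lower-length remainders.

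The main obstacle is that the $T$-insertion summands, after reduction, still contribute terms of block-length $k$ (with shifted exponents), so a naive length induction does not close. I would handle this by generating several independent relations: varying the split point of the factorisation (splitting after the $j$th $X^aY^b$ block for different $j$), and at $q = 1$ exploiting the commutativity of the spherical subalgebra to identify $(\mathbf{w}\,a_1\,\mathbf{v})(\mathbf{w}\,a_2\,\mathbf{v})$ with $(\mathbf{w}\,a_2\,\mathbf{v})(\mathbf{w}\,a_1\,\mathbf{v})$, together with the duality of Theorem \ref{thrm: duality isomorphism} which interchanges $X$ and $Y$. For generic $\bm{\tau}$, this system of relations among the finitely many length-$k$ monomials within a bounded exponent range should be nondegenerate, allowing us to solve for each length-$k$ element in terms of products of strictly shorter ones. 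The extension to $q \neq 1$ is then a deformation argument: nondegeneracy is an open condition and persists after specialising $q$ away from $1$.
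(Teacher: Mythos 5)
The paper gives no explicit proof of this corollary, so there is no canonical argument to compare against; I evaluate the proposal on its own terms. Your setup is sound: the reduction to the $\mathscr{D}_q$-side via Proposition \ref{prop: quantised main result}, the identification $X_1 \leftrightarrow q^{1/2}X$, $Y_1 \leftrightarrow Y$, $T_0^\vee \leftrightarrow T$, the quadratic reduction $T^2 = 1 + \overline{u}_0 T$, and the pushing identities (which I verified follow from \eqref{eq: XT relation}--\eqref{eq: TY relation}) are all correct. The factorisation trick inserting $\mathbf{v}\mathbf{w}$ and invoking \eqref{eq: A4 relation} is the right move.

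The gap is in the step you yourself flag. You observe that the $T$-insertion summands regenerate block-length-$k$ words and propose to resolve this by producing ``several independent relations'' (varying split points, commutativity at $q=1$, duality) and then asserting that ``for generic $\bm{\tau}$, this system of relations among the finitely many length-$k$ monomials within a bounded exponent range should be nondegenerate.'' Two problems. First, the exponent range is not bounded a priori: expanding $\mathbf{v}\mathbf{w}$ via \eqref{eq: A4 relation} and then running the pushing identities shifts exponents (e.g.\ $b_{k-1} \mapsto b_{k-1}+1$, $a_k \mapsto \pm(a_k-1)$, $b_k \mapsto -b_k$, with further shifts when reducing $TX^{\pm1}$ and $TY^{\pm1}$), so the recursion wanders over $\mathbb{Z}^{2k}$ and you have no finite linear system to invert unless you first establish a bound. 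Second, even granting a finite system, ``should be nondegenerate'' is a conjecture, not a proof; nothing in the proposal demonstrates nondegeneracy for any $n\geq 2$. The closing deformation argument is also unsubstantiated: membership of a fixed element of $\mathscr{D}_q^W$ in the subalgebra generated by the displayed monomials is not obviously an open condition in $q$, and the needed flatness or rigidity input is not supplied. In short, the skeleton of the argument is plausible, but the induction does not close as written, and the proposed repair is a sketch rather than an argument.
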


\begin{remark}\label{rem: quantum Hamiltonian reduction}
It would be interesting to quantise the isomorphism \eqref{eq: main result} using the framework of quantum Hamiltonian reduction, in the style of \cite{etingofloktevoblomkovrybnikov,jordan,wen}.
\end{remark}

\begin{remark}\label{rem: quiver map}
Recall that $\mathcal{M}_n$ can be viewed as a multiplicative quiver variety, by Corollary \ref{cor: CM space isomorphic to quiver variety}. In this case, one can define a map analogous to $\Theta^\text{Quiver}$ in \cite[(1.6.3)]{etingofganginzburgoblomkov}. This can be used to relate simple $\mathcal{H}$-modules to ideals of the quantum spherical $C^\vee C_1$ DAHA, similarly to \cite[Theorem 7]{berestchalykheshmatov}. This will be discussed elsewhere.
\end{remark}

\section{Application to the Trigonometric van Diejen System}\label{sec: dynamics}

In this section, we apply our results to study the trigonometric van Diejen system \cite{vandiejen}. First, recall that the family $e_\tau\mathcal{H}_{q,\bm{\tau}}e_\tau$ provides a non-commutative deformation of the commutative algebra $e_\tau\mathcal{H}e_\tau \cong \mathcal{Z}$, by which the latter inherits a Poisson bracket. We therefore may, and will, use the isomorphism \eqref{eq: main result} to induce a Poisson bracket $\{\,\cdot\,,\,\cdot\,\}$ on the character variety $\mathcal{M}_n$. As an immediate consequence, we have the following result.

\begin{proposition}\phantomsection\label{prop: Poisson bracket on CM space}
\begin{enumerate}[label=\rom]
	\item The coordinates $X_i,P_i$ on $\mathcal{M}_n$ are log-canonical, that is
	\begin{equation*}
	\{P_i,X_j\} = \delta_{ij}P_iX_j, \qquad \{P_i,P_j\} = \{X_i,X_j\} = 0.
	\end{equation*}
	\item The duality map $\mathcal{E}$ \eqref{eq: duality on CM space} is a Poisson anti-automorphism, so $\mathcal{E}_\ast\{\,\cdot\,,\,\cdot\,\}_{\mathcal{M}_n} = -\{\,\cdot\,,\,\cdot\,\}_{\mathcal{M}_n'}$.
	\item The Poisson bracket on $\mathcal{M}_n$ is non-degenerate, so $\mathcal{M}_n$ is symplectic.
\end{enumerate}
\end{proposition}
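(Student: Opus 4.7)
The plan is to handle the three parts in turn, letting part (i) do most of the work and then leveraging duality and a codimension argument.

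For part (i), I would exploit the flat deformation $e_\tau \mathcal{H}_{q,\bm{\tau}} e_\tau$ of the commutative algebra $\mathcal{Z} \cong e_\tau \mathcal{H} e_\tau$. By definition the induced bracket is
\[
\{a,b\} \;=\; \lim_{q\to 1}\,\frac{[\tilde a_q,\tilde b_q]}{q-1},
\]
for arbitrary lifts. After localising on $\delta(\bm{X})\delta_{\bm{\tau}}(\bm{X})$, Corollary \ref{cor: localisation of the DAHA by delta} identifies the classical spherical subalgebra with $\mathbb{C}[\bm{X}^{\pm 1},\bm{P}^{\pm 1}]^W_{\delta(\bm{X})\delta_{\bm{\tau}}(\bm{X})}$, and the deformation lifts to $\mathbb{C}_q[\bm{X}^{\pm 1},\bm{P}^{\pm 1}]$ with the single relation $P_iX_j=q^{\delta_{ij}}X_jP_i$. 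Plugging this into the commutator formula yields $\{P_i,X_j\}=\delta_{ij}P_iX_j$ and $\{P_i,P_j\}=\{X_i,X_j\}=0$ immediately. Transporting via the isomorphism $\Upsilon$ of Proposition \ref{prop: Upsilon is isomorphic on a subset} shows these are precisely the bracket relations of the coordinates $X_i,P_i$ on the chart $\mathcal{M}_n^{\bm{\tau}}$.

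For part (ii), I would trace the duality isomorphism $\varepsilon:\mathcal{H}_{q,\bm{\tau}}\to\mathcal{H}_{q^{-1},\bm{\sigma}}$ of Theorem \ref{thrm: duality isomorphism} through the deformation formula. Because $\varepsilon$ swaps $q\leftrightarrow q^{-1}$, the denominator in the bracket defining formula gets replaced by $q^{-1}-1 = -(q-1)/q$, which tends to $-(q-1)$ as $q\to 1$. Hence $\varepsilon$ intertwines the Poisson bracket on $e_\tau\mathcal{H}_{1,\bm{\tau}}e_\tau$ with the \emph{negative} of the bracket on $e_\tau\mathcal{H}_{1,\bm{\sigma}}e_\tau$; dualising to Spec then gives $\mathcal{E}_\ast\{\,\cdot\,,\,\cdot\,\}_{\mathcal{M}_n}=-\{\,\cdot\,,\,\cdot\,\}_{\mathcal{M}_n'}$ as claimed.

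Part (iii) combines parts (i) and (ii) with a codimension argument. On the chart $\mathcal{M}_n^{\bm{\tau}}$ the bracket is log-canonical with nowhere-vanishing coordinates, so the Poisson bivector $\pi$ is non-degenerate there. Applying the same reasoning to the dual DAHA and transporting through $\mathcal{E}$ shows $\pi$ is also non-degenerate on a second chart $\mathcal{M}_n^{\bm{\sigma}}$ in which $Y=A_4A_1$ is diagonalised. As shown in the proof of Theorem \ref{thrm: main result}, the two hypersurfaces $\delta(\bm{X})\delta_{\bm{\tau}}(\bm{X})=0$ and $\delta(\bm{Y})\delta_{\bm{\sigma}}(\bm{Y})=0$ in $\Spec(\mathcal{Z})\cong\mathcal{M}_n$ are transversal, so their intersection---which contains the complement of the union of the two charts---has codimension at least two.

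The final step is to upgrade non-degeneracy on an open set with codimension-two complement to non-degeneracy everywhere. On the smooth $2n$-dimensional variety $\mathcal{M}_n$, the Pfaffian $\mathrm{Pf}(\pi)$ is a regular section of a line bundle ($K_{\mathcal{M}_n}^{-1}$), so its zero locus, if non-empty, is a Weil divisor of pure codimension one. Since that zero locus sits inside a codimension-two subset by what we just said, it must be empty. Hence $\pi$ is non-degenerate everywhere and $\mathcal{M}_n$ is symplectic. The only step I view as a real obstacle---rather than routine bookkeeping---is verifying the transversality/codimension-two statement cleanly enough to invoke the Pfaffian argument; this is where one must be careful that the two chart complements are cut out by algebraically independent equations in $\bm{X}$ and $\bm{Y}$, but this indeed follows from the Basic Representation as already observed in the proof of Theorem \ref{thrm: main result}.
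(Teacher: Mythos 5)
Your proposal is correct and follows essentially the same route as the paper on all three parts: (i) reads off the log-canonical bracket from the $q$-deformed Laurent algebra $\mathbb{C}_q[\bm{X}^{\pm1},\bm{P}^{\pm1}]$ via the localisation isomorphism, (ii) derives the sign flip from the interchange $q\leftrightarrow q^{-1}$ under the duality $\varepsilon$, and (iii) combines the two charts with the codimension-two/transversality argument borrowed from the proof of Theorem \ref{thrm: main result}. The only genuine addition is your Pfaffian argument in (iii) --- viewing $\pi^{\wedge n}$ as a section of the anticanonical bundle whose zero locus, being a divisor, cannot sit inside a codimension-two set --- which makes explicit the standard step that the paper compresses into the phrase ``hence non-degenerate globally.''
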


\begin{proof}
(i) This follows because the coordinates $X_i,P_i$ correspond to the generators of the algebra $\mathbb{C}[\bm{X}^{\pm1},\bm{P}^{\pm1}]$, whose Poisson bracket comes from the quantised algebra $\mathbb{C}_q[\bm{X}^{\pm1},\bm{P}^{\pm1}]$.

(ii) The duality map $\mathcal{E}$ is induced by the algebra automorphism $\varepsilon$. It therefore respects the Poisson structure on the spherical subalgebra, with the change of sign for the bracket caused by the fact that $\varepsilon$ interchanges $q \leftrightarrow q\inv$.

(iii) For the non-degeneracy, we observe in (i) that the bracket is log-canonical and thus non-degenerate in both coordinate charts. Therefore, the bracket is non-degenerate except on a subset of codimension two, and hence non-degenerate globally.
\end{proof}

We now work in the first chart $(X_i,P_i)$ and consider the functions $h_k \coloneqq \tr X^k$ with $X = A_1A_2$,
\begin{equation}\label{eq: Hamiltonian trace of powers of X}
h_k = \sum_{i=1}^n (X_i^k + X_i^{-k}).
\end{equation}
 It is clear that $\{h_k,h_l\} = 0$ for all $k,l = 1,\dots,n$. Furthermore, we have
\begin{equation}\label{eq: XP dynamics}
\{X_i,h_k\} = 0, \qquad \{P_i,h_k\} = kP_i(X_i^k - X_i^{-k}).
\end{equation}
This means that the Hamiltonian dynamics governed by $h_k$ is separated in $(\bm{X},\bm{P})$ coordinates:
\begin{equation*}
X_i(t) = X_i(0), \qquad P_i(t) = e^{kt(X_i^k - X_i^{-k})}P_i(0).
\end{equation*}

We describe the corresponding dynamics in invariant terms, globally on $\mathcal{M}_n$. It is convenient to do so on the representation variety \eqref{eq: DS problem} associated to the conjugacy classes defined by \eqref{eq: our eigendata},
\begin{equation*}
\mathfrak{R}_n \coloneqq \mathfrak{R}_{0,4} = \{A_i \in C_i : A_1A_2A_3A_4 = \Id_{2n}\}.
\end{equation*}

Recall $\mathcal{M}_n = \mathfrak{R}_n\GIT\GL_{2n}(\mathbb{C})$. Introduce the following $\GL_{2n}(\mathbb{C})$-invariant vector field on $\mathfrak{R}_n$:
\begin{equation}\label{eq: dynamics on Rn in the first chart}
\dot{A}_1 = -k(A_1X^k - X^kA_1), \qquad \dot{A}_2 = -k(A_2X^k - X^kA_2), \qquad \dot{A}_3 = 0, \qquad \dot{A}_4 = 0.
\end{equation}
This can be easily integrated, giving $X = A_1A_2$ constant and
\begin{equation*}
A_1(t) = e^{ktX^k}A_1(0)e^{-ktX^k}, \quad A_2(t) = e^{ktX^k}A_2(0)e^{-ktX^k}, \quad A_3(t) = A_3(0), \quad A_4(t) = A_4(0).
\end{equation*}

\begin{proposition}\label{prop: dynamics on CM space in the first chart}
The Hamiltonian dynamics on $\mathcal{M}_n$ governed by $h_k = \tr X^k$ can be obtained by projecting the dynamics \eqref{eq: dynamics on Rn in the first chart} onto $\mathcal{M}_n$. The dynamics is complete on $\mathcal{M}_n$.
\end{proposition}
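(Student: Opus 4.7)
The plan is to establish the claim in three steps: first, show that \eqref{eq: dynamics on Rn in the first chart} defines a well-defined $\GL_{2n}(\mathbb{C})$-invariant vector field on $\mathfrak{R}_n$; second, verify that the induced vector field on the quotient $\mathcal{M}_n = \mathfrak{R}_n \GIT \GL_{2n}(\mathbb{C})$ agrees with the Hamiltonian vector field $X_{h_k}$ on the coordinate chart $\mathcal{M}_n^{\bm{\tau}}$; finally, extend this by density and read off completeness from the explicit integrated form.

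For the first step, I would check that \eqref{eq: dynamics on Rn in the first chart} preserves the defining relations of $\mathfrak{R}_n$. Since each non-trivial $\dot A_i$ is an infinitesimal conjugation, the conjugacy classes $C_i$ are automatically preserved. For the constraint $A_1 A_2 A_3 A_4 = \Id_{2n}$, the Leibniz rule combined with $X = A_1 A_2$ gives
\begin{equation*}
\tfrac{d}{dt}(A_1 A_2) = -k[A_1, X^k] A_2 - k A_1 [A_2, X^k] = -k[A_1 A_2, X^k] = 0,
\end{equation*}
so $X$ is constant in $t$, and hence so is $A_1 A_2 A_3 A_4$. The $\GL_{2n}(\mathbb{C})$-invariance is immediate, because $X$ transforms as $X \mapsto g X g^{-1}$ under \eqref{eq: alternative form action} and the vector field is built entirely from commutators with $X^k$. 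Because the $\GL_{2n}(\mathbb{C})$-action is free by Proposition \ref{prop: alternative form}, the invariant vector field descends to a regular vector field on the smooth quotient $\mathcal{M}_n$.

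For the second step, I would work in the chart provided by $\Upsilon$ of \S\ref{sec: coordinates}, in which $X$ has the diagonal form \eqref{eq: diagonal X}. Since $e^{ktX^k}$ is then diagonal as well, its conjugation preserves both the diagonal form of $X$ and the values of the $X_i$, matching the first half of \eqref{eq: XP dynamics}. To recover the evolution of $P_i$, one inspects the off-diagonal entries of $A_1$ from Proposition \ref{prop: matrices of Ai} at $q = 1$: the entry $(A_1)_{i, i+n} = \widetilde u_i P_i^{-1}$ sits at a position where diagonal conjugation by $e^{ktX^k}$ rescales it by the ratio $e^{ktX_i^k}/e^{ktX_i^{-k}} = e^{kt(X_i^k - X_i^{-k})}$, with the same factor appearing in $A_2$; reading the result back through the identification of the chart, this produces exactly the integrated Hamiltonian flow \eqref{eq: XP dynamics} (with the sign in \eqref{eq: dynamics on Rn in the first chart} chosen precisely to make this match). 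Since both the descended vector field and $X_{h_k}$ are regular on the smooth irreducible variety $\mathcal{M}_n$ (Theorem \ref{thrm: CM space is smooth and irreducible}) and coincide on the dense open subset $\mathcal{M}_n^{\bm{\tau}}$, they agree globally.

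Completeness is then immediate from the explicit integrated form $A_1(t) = e^{ktX^k} A_1(0) e^{-ktX^k}$, the analogue for $A_2$, together with $A_3, A_4$ fixed: all are defined for every $t \in \mathbb{C}$ on $\mathfrak{R}_n$, and projection to $\mathcal{M}_n$ preserves this. The main technical obstacle is the entry-by-entry matching in the chart, but this is rendered routine by the diagonality of $X$ and $e^{ktX^k}$ and the fact that the only non-diagonal structure of $A_1, A_2$ lives in the $(i, i \pm n)$ off-diagonal blocks, whose conjugation factors reduce to the single ratio $X_i^k/X_i^{-k}$.
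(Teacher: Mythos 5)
Your proposal follows essentially the same approach as the paper's very terse proof: verify the claim in the chart $(\bm{X},\bm{P})$ using \eqref{eq: XP dynamics}, extend globally by analytic continuation/density, and read off completeness from the explicit flow on $\mathfrak{R}_n$. You flesh out the details correctly --- the flow \eqref{eq: dynamics on Rn in the first chart} preserves the defining constraints of $\mathfrak{R}_n$ (since $X = A_1A_2$ is constant along it), it is $\GL_{2n}(\mathbb{C})$-invariant, it descends to a regular vector field on $\mathcal{M}_n$, and in the chart it fixes the $X_i$-coordinates.

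One point deserves closer scrutiny, however: the sign. You observe that the entry $(A_1)_{i,i+n} = \widetilde u_i P_i^{-1}$ is rescaled by $e^{kt(X_i^k - X_i^{-k})}$ under conjugation by $e^{ktX^k}$. But that factor multiplies $P_i^{-1}$, so the projected flow reads $P_i(t) = e^{-kt(X_i^k-X_i^{-k})}P_i(0)$, which is the \emph{reciprocal} of the integrated form $P_i(t) = e^{kt(X_i^k-X_i^{-k})}P_i(0)$ following from $\dot P_i = \{P_i, h_k\}$ in \eqref{eq: XP dynamics}. You assert the two coincide and that the sign in \eqref{eq: dynamics on Rn in the first chart} was ``chosen precisely to make this match,'' but with the stated conventions they do not: one sign must flip, either in \eqref{eq: dynamics on Rn in the first chart} or by adopting the opposite Hamiltonian flow convention $\dot f = \{h_k,f\}$. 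This mismatch is arguably inherited from the paper's own presentation, but rather than asserting the match parenthetically you should carry out the entry-by-entry calculation and reconcile the sign explicitly, since this is exactly the content of the ``straightforward calculation in coordinates'' being invoked.
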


\begin{proof}
This can be confirmed by a straightforward calculation in coordinates, using \eqref{eq: XP dynamics}. By analytic continuation, the result is valid globally on $\mathcal{M}_n$. Because the dynamics is obviously complete on the auxiliary space $\mathfrak{R}_n$, it is complete on $\mathcal{M}_n$.
\end{proof}

Applying the duality map $\mathcal{E}$, one can interchange the roles of $X = A_1A_2$ and $Y = A_4A_1$. The latter matrix $Y$ has been identified in \cite[Corollary 4.4]{chalykh} as a Lax matrix for the van Diejen system. In particular, its conserved quantities are given by $H_k \coloneqq \tr Y^k$, so we will take them as Hamiltonians of this system. The corresponding dynamics on $\mathfrak{R}_n$ takes the form
\begin{equation}\label{eq: dynamics on Rn in the second chart}
\dot{A}_1 = 0, \qquad \dot{A}_2 = -k(Y^kA_2 - A_2Y^k), \qquad \dot{A}_3 = -k(Y^kA_3 - A_3Y^k), \qquad \dot{A}_4 = 0,
\end{equation}
which integrates to give $Y = A_4A_1$ constant and
\begin{equation*}
A_1(t) = A_1(0), \quad A_2(t) = e^{-ktY^k}A_2(0)e^{ktY^k}, \quad A_3(t) = e^{-ktY^k}A_3(0)e^{ktY^k}, \quad A_4(t) = A_4(0).
\end{equation*}

\begin{theorem}\label{thrm: dynamics on CM space in the second chart}
The Hamiltonian dynamics on $\mathcal{M}_n$ governed by $H_k = \tr Y^k$ can be obtained by projecting the dynamics \eqref{eq: dynamics on Rn in the second chart} onto $\mathcal{M}_n$. The dynamics is complete on $\mathcal{M}_n$.
\end{theorem}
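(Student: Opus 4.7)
The plan is to reduce the statement to Proposition~\ref{prop: dynamics on CM space in the first chart} by invoking the duality isomorphism $\mathcal{E}:\mathcal{M}_n\to\mathcal{M}_n'$ from \S\ref{subsec: duality and the second chart}. First, I would observe that the Hamiltonians transform correctly under duality: using $A_1A_2A_3A_4=\Id$, a direct manipulation gives $X'=A_1'A_2'=A_3^{-1}A_2^{-1}=(A_2A_3)^{-1}=A_4A_1=Y$, so $H_k=\tr Y^k$ on $\mathcal{M}_n$ is identically the pullback $\mathcal{E}^\ast h_k'$ of the first-chart Hamiltonian $h_k'=\tr(X')^k$ on the dual Calogero--Moser space $\mathcal{M}_n'$.

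Proposition~\ref{prop: dynamics on CM space in the first chart}, applied to the dual DAHA $\mathcal{H}_{1,\bm{\sigma}}$, then identifies the Hamiltonian flow of $h_k'$ on $\mathcal{M}_n'$ as the projection from $\mathfrak{R}_n'$ of the $\GL_{2n}(\mathbb{C})$-invariant vector field with $\dot{A}_1'=-k[A_1',(X')^k]$, $\dot{A}_2'=-k[A_2',(X')^k]$, $\dot{A}_3'=\dot{A}_4'=0$. I would then pull this lifted field back along the map $\mathcal{E}:\mathfrak{R}_n\to\mathfrak{R}_n'$, $(A_1,A_2,A_3,A_4)\mapsto(A_3^{-1},A_2^{-1},A_1^{-1},A_4^{-1})$, which is well-defined because the conjugacy data \eqref{eq: our eigendata} is stable under inversion combined with the relevant permutation of the four punctures. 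Differentiating via $\frac{d}{dt}B^{-1}=-B^{-1}\dot{B}B^{-1}$, the $A_1,A_4$-components of the pulled-back field vanish automatically, and the $A_2,A_3$-components become commutators with $Y^k$.

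The hard part will be the bookkeeping of a single global sign: by Proposition~\ref{prop: Poisson bracket on CM space}(ii), $\mathcal{E}$ is a Poisson \emph{anti}-automorphism, so the Hamiltonian flow of $\mathcal{E}^\ast h_k'=H_k$ on $\mathcal{M}_n$ corresponds to the \emph{time-reversed} flow of $h_k'$ on $\mathcal{M}_n'$. After incorporating this reversal, differentiating $A_3=(A_1')^{-1}$ and using the sign-flipped formula $\dot{A}_1'=+k[A_1',Y^k]$ yields $\dot{A}_3=-k(Y^kA_3-A_3Y^k)$, and the analogous calculation for $A_2=(A_2')^{-1}$ produces $\dot{A}_2=-k(Y^kA_2-A_2Y^k)$; these match \eqref{eq: dynamics on Rn in the second chart} on the nose. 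Completeness is then immediate as in Proposition~\ref{prop: dynamics on CM space in the first chart}: on the auxiliary affine variety $\mathfrak{R}_n$, the vector field \eqref{eq: dynamics on Rn in the second chart} integrates in closed form by conjugation with $e^{\pm ktY^k}$, which is defined for all $t\in\mathbb{R}$, so the induced flow on $\mathcal{M}_n$ is complete.
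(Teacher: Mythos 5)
Your proposal is correct and follows the same duality-based route as the paper, which simply says ``applying the duality map $\mathcal{E}$, one can interchange the roles of $X$ and $Y$'' and then states the theorem without further elaboration. Your more explicit verification is sound: the identity $X'=A_1'A_2'=(A_2A_3)^{-1}=A_4A_1=Y$ is right, the well-definedness of $\mathcal{E}$ on the level of $\mathfrak{R}_n$ (because inversion of the $\Lambda_i$ sends the $\bm{\tau}$-eigendata to the $\bm{\sigma}$-eigendata under the relevant permutation of punctures) is a point worth making, and your bookkeeping of the time-reversal coming from $\mathcal{E}$ being a Poisson \emph{anti}-automorphism is exactly where a naive application of duality would go wrong. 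The differentiations $\dot{A}_3=-A_3\dot{A}_1'A_3=-k(Y^kA_3-A_3Y^k)$ and $\dot{A}_2=-A_2\dot{A}_2'A_2=-k(Y^kA_2-A_2Y^k)$ land precisely on \eqref{eq: dynamics on Rn in the second chart}, and completeness follows as in the first chart. In short, what you wrote is the proof the paper leaves implicit.
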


By Lemma \ref{lem: eigenvalues of X are pairewise reciprocal}, the matrix $X(t) = A_1(t)A_2(t)$ has pairwise-reciprocal eigenvalues $x_i,x_i\inv$ for any $t$, giving the positions of the particles at time $t$.

The second coordinate chart on $\mathcal{M}_n$ provides the action-angle variables for the van Diejen system. The action variables are determined by the eigenvalues of $Y$, and the angle variables are the dual counterparts of $P_i$. Correspondingly, the functions $h_k = \tr X^k$ in terms of these action-angle coordinates assume the form of the van Diejen Hamiltonians with dual parameters. This picture is analogous to the Ruijsenaars duality well-known in the $\GL_n$-case \cite{ruijsenaars,fockgorskynekrasovrubtsov,feherklimcik}, and it is a non-trivial manifestation of the duality for DAHAs. In a special limiting case of the five-parameter van Diejen system, such duality was established in \cite{fehermarshall17,fehermarshall19}.

\begin{remark}\label{rem: Lax represenation of van Diejen dynamics}
For  a two-parameter case $k_0 = u_0 = u_n = 1$, the van Diejen system has been thoroughly studied by Pusztai and G\"orbe \cite{pusztaigorbe} (cf. \cite{avanrollet} for some results in a one-parameter case). We explain how their results fit into ours: the relations \eqref{eq: T relation}--\eqref{eq: YTX relation} simplify to
\begin{equation*}
\begin{gathered}
T = T\inv, \qquad XT\inv = TX\inv, \qquad YT = T\inv Y\inv,\\
(tYX - t\inv XY)T = (k_nt\inv - k_n\inv t)\Id_V + (t-t\inv)vw.
\end{gathered}
\end{equation*}
With this, $X(t)$ can be rearranged to $X(t) = X(0)e^{kt(Y^k-Y^{-k})}$, which matches the formulae in \textit{op. cit.} (cf. \cite[(4.113)]{pusztaigorbe} for example) up to the change of notation $X = e^{2\Lambda}$, $Y = L$, $T = C$.
\end{remark}

\subsection{Interpretation via Hamiltonian Reduction}\label{subsec: reduction}

A character variety of a Riemann surface can be seen as a result of (infinite-dimensional) Hamiltonian reduction, performed on the moduli space of smooth connections \cite{atiyahbott}, and so it has a canonical Poisson structure, cf. \cite{goldman84,goldman86}. Fock and Rosly \cite{fockrosly} explained how to obtain the same space by a finite-dimensional reduction, modelling flat connections by combinatorial connections on graphs embedded into the surface. Applying this to the one-punctured torus, they obtained the variety $CM_\tau$ appearing in Theorem \ref{thrm: Oblomkov isomorphism} and were able to interpret it as a completed phase space for the Ruijsenaars--Schneider system, see \cite[Appendix]{fockrosly} (the observation that the Ruijsenaars--Schneider system naturally arises on the moduli space of flat connections on the one-punctured torus goes back to \cite{gorskynekrasov}). Carrying out the same approach for the four-punctured sphere would be technically difficult, but we can fortunately arrive at a similar interpretation without much hard work, thanks to the results at hand. Below, we freely use the terminology and notation from \cite{fockrosly}, so the reader should consult that paper for the details.

\begin{figure}[H]
\centering
\begin{tikzpicture}[decoration={markings,mark=at position 0.5 with {\arrow{>}}}]
\filldraw (0,0) circle (2pt);
\draw[postaction={decorate}] (0,0) to[out=135,in=45,loop,distance=6cm] (0,0);
\draw[postaction={decorate}] (0,0) to[out=135,in=45,loop,distance=4.5cm] (0,0);
\draw[postaction={decorate}] (0,0) to[out=135,in=45,loop,distance=3cm] (0,0);
\node at (-0.1,3.5) {\footnotesize$a$};
\node at (-0.1,2.7) {\footnotesize$b$};
\node at (-0.1,1.85) {\footnotesize$c$};
\end{tikzpicture}
\caption{The graph corresponding to the four-punctured sphere.}
\label{fig: four-punctured sphere graph}
\end{figure}
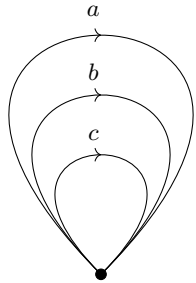

Let us consider the space of $G$-valued graph connections on the graph in Figure \ref{fig: four-punctured sphere graph}, representing the sphere with four holes (punctures). It has three edges, to each of which we associate an element of $G$. Hence, the moduli space of graph connections in this case is $\mathcal{A}^l = G \times G \times G$. The graph has one vertex, so the gauge group $\mathcal{G}^l = G$ acts on $\mathcal{A}^l$ by simultaneous conjugation.

According to \cite{fockrosly}, the choice of a Poisson structure on $\mathcal{A}^l$ is based on a choice of a classical $r$-matrix. We work with the group $G = \GL_N(\mathbb{C})$ for $N = 2n$ and choose
\begin{equation*}
r = \sum_{i<j}^N E_{ij} \otimes E_{ji} + \frac{1}{2}\sum_{i=1}^N E_{ii} \otimes E_{ii}.
\end{equation*}

\begin{remark}\label{rem: r-matrix decomposition}
We can express $r = r_a + t$ as the sum of a skew-symmetric and symmetric part, for
\begin{equation*}
r_a = \frac{1}{2}(r - r_{21}) \qquad\text{and}\qquad t = \frac{1}{2}(r + r_{21}).
\end{equation*}
\end{remark}

This defines a Poisson bivector on $\mathcal{A}^l$, by \cite[(16)]{fockrosly}, which then induces a Poisson bracket on $\mathcal{A}^l/\mathcal{G}^l$. By \cite[Proposition 5]{fockrosly}, the resulting Poisson manifold is isomorphic to the moduli space $\mathcal{M}$ of flat connections on the four-punctured sphere, with the Atiyah--Bott Poisson structure. Symplectic leaves correspond to fixing the conjugacy classes of holonomies around the punctures, so we choose the conjugacy classes $C_i$ defined by \eqref{eq: our eigendata}, resulting in our character variety of the Calogero--Moser space $\mathcal{M}_n$. In terms of $(\mathbf{A},\mathbf{B},\mathbf{C}) \in \mathcal{A}^l$, this means imposing the constraints
\begin{equation*}
\mathbf{A} \in C_1, \qquad \mathbf{A}\inv\mathbf{B} \in C_2, \qquad \mathbf{B}\inv\mathbf{C} \in C_3, \qquad \mathbf{C}\inv \in C_4.
\end{equation*}
Therefore, a priori, we have two Poisson brackets on $\mathcal{M}_n$: the Atiyah--Bott bracket $\{\,\cdot\,,\,\cdot\,\}_\text{AB}$ and the bracket coming from the DAHA $\{\,\cdot\,,\,\cdot\,\}_\text{DAHA}$. We claim that these brackets are the same. Before the proof, we write the Poisson brackets on $\mathcal{A}^l$, which are analogues of \cite[(A2)--(A4)]{fockrosly}:
\begin{align}
\{\mathbf{A},\mathbf{A}\} &= r_a(\mathbf{A} \otimes \mathbf{A}) + (\mathbf{A} \otimes \mathbf{A})r_a + (1 \otimes \mathbf{A})r_{21}(\mathbf{A} \otimes 1) - (\mathbf{A} \otimes 1)r(1 \otimes \mathbf{A}),\label{eq: FR bracekt A with A}\\
\{\mathbf{A},\mathbf{B}\} &= r(\mathbf{A} \otimes \mathbf{B}) - (\mathbf{A} \otimes \mathbf{B})r_{21} + (1 \otimes \mathbf{B})r_{21}(\mathbf{A} \otimes 1) - (\mathbf{A} \otimes 1)r(1 \otimes \mathbf{B}),\label{eq: FR bracket A with B}
\end{align}
with $\{\mathbf{B},\mathbf{B}\}$, $\{\mathbf{C},\mathbf{C}\}$ completely similar to \eqref{eq: FR bracekt A with A}, and $\{\mathbf{A},\mathbf{C}\}$, $\{\mathbf{B},\mathbf{C}\}$ completely similar to \eqref{eq: FR bracket A with B}. 

\begin{proposition}\label{prop: our isomorphism is a Poisson map}
The isomorphism \eqref{eq: main result} is a Poisson map, that is it identifies the natural Poisson bracket on the spherical subalgebra with the Atiyah--Bott bracket on the character variety.
\end{proposition}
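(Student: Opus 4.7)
The plan is to reduce the question to a chart-level computation. Both brackets are algebraic and non-degenerate on the smooth $2n$-dimensional variety $\mathcal{M}_n$ (for the DAHA bracket this is Proposition \ref{prop: Poisson bracket on CM space}(iii); for the Atiyah--Bott bracket it is the classical fact for character varieties of punctured surfaces with generic semi-simple conjugacy classes). It therefore suffices to show that they coincide on the Zariski-dense open subset $\mathcal{M}_n^{\bm{\tau}} \cong \mathcal{U}_{\bm{\tau}}/W$ constructed in Section \ref{sec: coordinates}; equality will then extend to the whole of $\mathcal{M}_n$ by algebraicity and density.

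In this chart the DAHA bracket is log-canonical by Proposition \ref{prop: Poisson bracket on CM space}(i), namely $\{P_i, X_j\}_{\text{DAHA}} = \delta_{ij}P_iX_j$ with $\{X_i, X_j\}_{\text{DAHA}} = \{P_i, P_j\}_{\text{DAHA}} = 0$, so the task reduces to showing that the Atiyah--Bott bracket takes the same form. To this end I would impose the gauge $\mathbf{B} = X = A_1A_2$ diagonal from \eqref{eq: diagonal X}, in which the $X_i$ are the eigenvalues of $\mathbf{B}$ and, by Proposition \ref{prop: matrices of Ai}, the $P_i$ appear as the specific entries $\widetilde{u}_i P_i\inv$ of $\mathbf{A} = A_1$ in positions with $|i-j| = n$. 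Using the Fock--Rosly brackets \eqref{eq: FR bracekt A with A} and \eqref{eq: FR bracket A with B}, one computes directly: the $X_i$ Poisson-commute because they are extracted from the characteristic polynomial of $\mathbf{B}$ and the symmetric part of $r$ cancels in brackets of such invariants (a standard identity); the mixed $\{P_i,X_j\}$ and the $\{P_i,P_j\}$ brackets follow after Dirac reduction with respect to the gauge-fixing condition that $\mathbf{B}$ be diagonal, producing the expected log-canonical expressions.

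The main obstacle will be handling the Dirac correction terms produced by the gauge-fixing, since the $P_i$ are not $\GL_{2n}$-invariant but are preserved only by the residual maximal torus stabilising the diagonal form of $\mathbf{B}$. Technically this is the same computation that Fock and Rosly carried out for the one-punctured torus in order to obtain the Ruijsenaars--Schneider brackets, adapted here to the three-edge graph of Figure \ref{fig: four-punctured sphere graph} and carrying through the factors $\widetilde{u}_i$ introduced by the explicit matrix realisation of $A_1$ in Proposition \ref{prop: matrices of Ai}. Once the Atiyah--Bott bracket is verified to be log-canonical with the same coefficients as the DAHA bracket on the dense chart $\mathcal{M}_n^{\bm{\tau}}$, Proposition \ref{prop: our isomorphism is a Poisson map} follows.
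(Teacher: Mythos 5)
Your plan is sound in outline --- reduce to the dense chart $\mathcal{M}_n^{\bm\tau}$, where the DAHA bracket is log-canonical, and then argue that the Atiyah--Bott bracket is log-canonical with the same coefficients --- but the crucial step, the Dirac reduction after gauge-fixing $\mathbf{B}$ to diagonal form in order to compute $\{P_i,X_j\}_{\text{AB}}$ and $\{P_i,P_j\}_{\text{AB}}$, is not carried out, and it is genuinely the heart of the matter. The $P_i$ are not gauge-invariant: they are recovered from particular entries of $A_1$ after diagonalising $X=A_1A_2$, and the factors $\widetilde{u}_i$ depend on the $X_i$, so the Dirac correction terms (coming from the constraints defining the gauge slice and the residual torus action) are not innocent. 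Showing that they conspire to give exactly $\{P_i,X_j\}_{\text{AB}}=\delta_{ij}P_iX_j$ and $\{P_i,P_j\}_{\text{AB}}=0$, with the $\widetilde{u}_i$ cancelling, is a substantive computation that you have flagged as ``the main obstacle'' but have not resolved. As written, this is a gap.

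The paper sidesteps the Dirac computation entirely. Instead of expressing the Atiyah--Bott bracket in the chart, it works with the gauge-invariant functions $\tr\mathbf{B}^k$ and $\tr(\mathbf{C}^{-1}\mathbf{A})^k$ on $\mathcal{A}^l$, whose Fock--Rosly Hamiltonian vector fields can be computed directly on $\mathcal{A}^l$ without any gauge-fixing; for instance one finds $\{\mathbf{A},\tr\mathbf{B}^k\}=-k[\mathbf{A},\mathbf{B}^k]$, $\{\mathbf{B},\tr\mathbf{B}^k\}=\{\mathbf{C},\tr\mathbf{B}^k\}=0$. After the identification $\mathbf{A}=A_1$, $\mathbf{B}=A_1A_2=X$, $\mathbf{C}=A_4^{-1}$, this is precisely the flow \eqref{eq: dynamics on Rn in the first chart}, which Proposition~\ref{prop: dynamics on CM space in the first chart} already identified with the DAHA-Hamiltonian flow of $h_k=\tr X^k$; the same works for $H_k=\tr Y^k$ via \eqref{eq: dynamics on Rn in the second chart}. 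Thus $\tr X^k$ and $\tr Y^k$, $k=1,\dots,n$, all lie in the kernel of the difference bivector $\{\,\cdot\,,\,\cdot\,\}_{\text{AB}}-\{\,\cdot\,,\,\cdot\,\}_{\text{DAHA}}$, and since these $2n$ functions are generically functionally independent on the $2n$-dimensional $\mathcal{M}_n$, their differentials span the cotangent space generically, forcing the difference bivector to vanish on a dense set and hence everywhere. If you want to persist with your approach, you would at minimum have to carry out the Fock--Rosly/Dirac computation for the three-loop graph and track the $\widetilde{u}_i$ cancellations explicitly; the functional-independence shortcut used in the paper is the cleaner and more economical route.
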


\begin{proof}
Working on $\mathcal{A}^l$ with the Fock--Rosly bracket, we take $\tr \mathbf{B}^k$ and calculate its brackets with $\mathbf{A}$, $\mathbf{B}$, $\mathbf{C}$. This is similar to the way \cite[(A6)--(A8)]{fockrosly} are derived, and the result is
\begin{equation}
\{\mathbf{A},\tr\mathbf{B}^k\} = -k(\mathbf{A}\mathbf{B}^k - \mathbf{B}^k\mathbf{A}), \qquad \{\mathbf{B},\tr\mathbf{B}^k\} = 0, \qquad \{\mathbf{C},\tr\mathbf{B}^k\} = 0.
\end{equation}
Upon the identification $\mathbf{A} = A_1$, $\mathbf{B} = A_1A_2 = X$ and $\mathbf{C} = A_4\inv$, the vector field $\{\,\cdot\,,\tr\mathbf{B}^k\}$ is the same as in \eqref{eq: dynamics on Rn in the first chart}. Projecting onto $\mathcal{M}_n \cong \mathcal{A}^l/\mathcal{G}^l$, it becomes clear that $\tr X^k$ defines the same vector field with respect to each of the Poisson brackets, which is to say
\begin{equation*}
\{\,\cdot\,,\tr X^k\}_\text{AB} = \{\,\cdot\,,\tr X^k\}_\text{DAHA}.
\end{equation*}
Similarly, for $Y = \mathbf{C}\inv \mathbf{A}$, we calculate brackets between $\tr Y^k$ and $\mathbf{A}$, $\mathbf{B}$, $\mathbf{C}$, and notice
\begin{equation*}
\{\,\cdot\,,\tr Y^k\}_\text{AB} = \{\,\cdot\,,\tr Y^k\}_\text{DAHA}.
\end{equation*}
We see now that $\tr X^k$ and $\tr Y^k$ are in the kernel of $\{\,\cdot\,,\,\cdot\,\} \coloneqq \{\,\cdot\,,\,\cdot\,\}_\text{AB} - \{\,\cdot\,,\,\cdot\,\}_\text{DAHA}$. But the $2n$ functions $(\tr X^k,\tr Y^k)_{k=1,\dots,n}$ are functionally independent generically on $\mathcal{M}_n$, so $\{\,\cdot\,,\,\cdot\,\} \equiv 0$.
\end{proof}

\begin{corollary}\label{cor: spherical subalgebra equipped with Atiyah-Bott bracket}
The coordinates $X_i,P_i$ on $\mathcal{M}_n$ are log-canonical with respect to the Atiyah--Bott bracket. The spherical subalgebra $e_\tau\mathcal{H}_{q,\bm{\tau}}e_\tau$ provides a quantisation of the character variety $\mathcal{M}_n$, equipped with the Atiyah--Bott bracket.
\end{corollary}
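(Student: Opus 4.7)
The plan is to deduce both assertions directly from the preceding results. For the first statement, Proposition \ref{prop: Poisson bracket on CM space}(i) established that the coordinates $X_i,P_i$ are log-canonical with respect to the Poisson bracket $\{\,\cdot\,,\,\cdot\,\}_\text{DAHA}$ induced from the non-commutative deformation $e_\tau\mathcal{H}_{q,\bm{\tau}}e_\tau$ of $e_\tau\mathcal{H}e_\tau \cong \mathbb{C}[\mathcal{M}_n]$. Proposition \ref{prop: our isomorphism is a Poisson map} identifies $\{\,\cdot\,,\,\cdot\,\}_\text{DAHA}$ with the Atiyah--Bott bracket $\{\,\cdot\,,\,\cdot\,\}_\text{AB}$ on $\mathcal{M}_n$. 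Chaining these two facts, one immediately obtains
\begin{equation*}
\{P_i,X_j\}_\text{AB} = \delta_{ij}P_iX_j, \qquad \{P_i,P_j\}_\text{AB} = \{X_i,X_j\}_\text{AB} = 0,
\end{equation*}
which is precisely the log-canonical property in the first coordinate chart.

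For the second assertion, I would unwind the meaning of \emph{quantisation}. By Theorem \ref{thrm: main result}, the classical spherical subalgebra $e_\tau\mathcal{H}_{1,\bm{\tau}}e_\tau$ is isomorphic to $\mathbb{C}[\mathcal{M}_n]$, and the family $\{e_\tau\mathcal{H}_{q,\bm{\tau}}e_\tau\}_{q}$ supplies a flat non-commutative deformation in the parameter $q$, as explicitly witnessed by Proposition \ref{prop: quantised main result}. The Poisson bracket obtained by extracting the first-order term of the commutator in this deformation is, by construction, $\{\,\cdot\,,\,\cdot\,\}_\text{DAHA}$. Invoking Proposition \ref{prop: our isomorphism is a Poisson map} once more to replace this bracket by $\{\,\cdot\,,\,\cdot\,\}_\text{AB}$, the family $e_\tau\mathcal{H}_{q,\bm{\tau}}e_\tau$ is thereby recognised as a quantisation of $(\mathcal{M}_n,\{\,\cdot\,,\,\cdot\,\}_\text{AB})$.

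There is essentially no obstacle: the corollary is a purely formal consequence of combining Propositions \ref{prop: Poisson bracket on CM space}, \ref{prop: quantised main result} and \ref{prop: our isomorphism is a Poisson map}. The only substantive point to double-check is that the Poisson bracket extracted from the $q$-deformation in the proof of Proposition \ref{prop: Poisson bracket on CM space} is indeed the same bracket that features in Proposition \ref{prop: our isomorphism is a Poisson map}; this is clear because both are computed from the same family $e_\tau\mathcal{H}_{q,\bm{\tau}}e_\tau$ via the standard semi-classical limit $\{a,b\} = \lim_{q\to 1}[a,b]/(q-1)$.
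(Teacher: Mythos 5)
Your argument is correct and matches the paper's intent exactly: the paper gives no written proof for this corollary, treating it as an immediate consequence of chaining Proposition \ref{prop: Poisson bracket on CM space}(i) with Proposition \ref{prop: our isomorphism is a Poisson map} for the first assertion, and combining Theorem \ref{thrm: main result}, Proposition \ref{prop: quantised main result} and Proposition \ref{prop: our isomorphism is a Poisson map} for the second. Your final remark about the two brackets agreeing because both arise from the same $q$-family via the semi-classical limit is precisely the observation that makes the chaining legitimate.
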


We conclude that the symplectic leaf $\mathcal{M}_n$ of the moduli space $\mathcal{M}$ of flat $\GL_{2n}(\mathbb{C})$-connections on the four-punctured sphere serves as a completed phase space for the van Diejen system.

\begin{remark}\label{rem: quasi-Hamiltonian reduction}
The variety $\mathcal{M}_n$, as a multiplicative quiver variety, can also be obtained by quasi-Hamiltonian reduction by the result of Van den Bergh \cite{vandenbergh}; see also \cite{massuyeauturaev}. We expect the resulting Poisson bracket to coincide with the Atiyah--Bott bracket.
\end{remark}

\begin{remark}\label{rem: spin version}
The interpretation of the van Diejen system within the moduli space of flat connections on a punctured sphere leads to a symplectic action of (a subgroup of) the mapping class group on the family of these systems and corresponding spaces $\mathcal{M}_n$. This also suggests that there should be a geometric way of constructing and studying a spin version of the system, using the approach of \cite{arthamonovreshetikhin,chalykhfairon}.
\end{remark}

\phantomsection\section*{References}
\addcontentsline{toc}{section}{References}
\bibliographystyle{alphaurl}
\bibliography{References}

\bigskip\bigskip\bigskip
\begin{tabular}{@{}l@{}}%
\textsc{Oleg Chalykh}: School of Mathematics, University of Leeds, United Kingdom\\
\textit{e-mail address}: \href{mailto:O.Chalykh@leeds.ac.uk}{\texttt{O.Chalykh@leeds.ac.uk}}\\\\
\textsc{Bradley Ryan}: School of Mathematics, University of Leeds, United Kingdom\\
\textit{e-mail address}: \href{mailto:B.J.Ryan@leeds.ac.uk}{\texttt{B.J.Ryan@leeds.ac.uk}}
\end{tabular}
\end{document}